\documentclass[9pt]{article}

\makeatletter
\DeclareRobustCommand{\qed}{%
  \ifmmode 
  \else \leavevmode\unskip\penalty9999 \hbox{}\nobreak\hfill
  \fi
  \quad\hbox{\qedsymbol}}
\newcommand{\openbox}{\leavevmode
  \hbox to.77778em{%
  \hfil\vrule
  \vbox to.675em{\hrule width.6em\vfil\hrule}%
  \vrule\hfil}}
\newcommand{\qedsymbol}{\openbox}
\newenvironment{proof}[1][\proofname]{\par
  \normalfont
  \topsep6\p@\@plus6\p@ \trivlist
  \item[\hskip\labelsep\itshape
    #1.]\ignorespaces
}{%
  \qed\endtrivlist
}
\newcommand{\proofname}{Proof}
\makeatother

\usepackage[utf8]{inputenc}

\usepackage{bm}

\usepackage{color}
\usepackage{latexsym}
\usepackage{dsfont}
\usepackage{amssymb}
\usepackage{comment}
\usepackage{graphicx}
\usepackage{amsmath,amsfonts,amssymb,theorem,euscript,array,enumerate,amsfonts,mathrsfs}
\usepackage{hyperref}
\usepackage{appendix}
\usepackage[T1]{fontenc}
\usepackage{babel}
\numberwithin{equation}{section}
\usepackage{bbm}
\usepackage{subfigure}
\usepackage{color}
\usepackage{stmaryrd}

\usepackage[algo2e,ruled,vlined]{algorithm2e} 
 \usepackage{algorithm}
 \usepackage{algorithmicx}
\usepackage{algpseudocode}
\usepackage{ marvosym }
\usepackage{hyperref}
\usepackage{bm}
\usepackage{xcolor, soul}

\usepackage{mathtools}
\mathtoolsset{showonlyrefs}

\usepackage{comment}

\usepackage{tikz}
\usetikzlibrary{fit,matrix,chains,positioning,decorations.pathreplacing,arrows}

\usepackage{mathtools}

\usepackage{geometry}
\usepackage{algpseudocode}
\usepackage{algorithm}

\usepackage[normalem]{ulem}

\newcommand{\pr}{\text{pr}}

\newcommand{\ba}{{\bf a}}

\def \b1{\bf{1}}

\def \A{\mathbb{A}}

\def \N{\mathbb{N}}
\def \R{\mathbb{R}}

\def \E{\mathbb{E}}
\def \F{\mathbb{F}}

\def \P{\mathbb{P}}

\def \T{\mathbb{T}}

\def \bd{\mathbb{d}}

\def \bpi{\boldsymbol{\pi}} 
 
\def \bp{\boldsymbol{p}} 
\def \bpi{\boldsymbol{\pi}} 
 
\def \bF{\boldsymbol{F}} 
 
\def \bd{\boldsymbol{d}}

\def \bnu{\boldsymbol{\nu}} 
\def \bA{\boldsymbol{A}} 
\def \bu{\boldsymbol{u}} 
 
\def \balpha{\boldsymbol{\alpha}}

\def \bAc{\boldsymbol{\Ac}}

\def \mra{\mathrm{a}}

\def \d{\mathrm{d}}

\def\esssup_#1{\underset{#1}{\mathrm{ess\,sup\, }}}

\def\argmin_#1{\underset{#1}{\mathrm{argmin\, }}}
\def\argmax_#1{\underset{#1}{\mathrm{argmax\, }}}

\def\dm#1{\frac{\delta}{\delta m}}

\def \Ac{{\cal A}}
\def \Bc{{\cal B}}
\def \Cc{{\cal C}}
\def \Dc{{\cal D}}
\def \Ec{{\cal E}}
\def \Fc{{\cal F}}
\def \Gc{{\cal G}}

\def \Ic{{\cal I}}

\def \Lc{{\cal L}}
\def \Pc{{\cal P}}
\def \Rc{{\cal R}}

\def \Mc{{\cal M}}

\def \Oc{{\cal O}}
\def \Sc{{\cal S}}
\def \Tc{{\cal T}}
\def \Uc{{\cal U}}

\def \Wc{{\cal W}}
\def \Xc{{\cal X}}
\def \Yc{{\cal Y}}

\def \eps{\varepsilon}

\def\bolX{{\boldsymbol X}}

\def\boxi{{\boldsymbol \xi}}

\def\bX{{\bf X}}

\def\be{{\bf e}}
\def\bx{{\boldsymbol x}}

\def\bp{{\bf p}}

\def\bd{{\bf d}}

\def \mra{\mathrm{a}}

\def \d{\mathrm{d}}

\def\beqs{\begin{eqnarray*}}
\def\enqs{\end{eqnarray*}}
\def\beq{\begin{eqnarray}}
\def\enq{\end{eqnarray}}

\newcommand{\red}[1]{\textcolor{red}{#1}}
\newcommand{\bl}[1]{\textcolor{blue}{#1}}

\def\red#1{{\color{red}#1}}

\def\cy#1{{\color{cyan}#1}}

\newtheorem{Theorem}{Theorem}[section] 
\newtheorem{Definition}[Theorem]{Definition} 
\newtheorem{Proposition}[Theorem]{Proposition}
\newtheorem{Assumption}[Theorem]{Assumption}
\newtheorem{Lemma}[Theorem]{Lemma}

\newtheorem{Remark}[Theorem]{Remark}

\title{Non-Exchangeable Mean Field Markov Decision Processes with common noise : from Bellman equation to quantitative propagation of chaos}

\author{Samy MEKKAOUI\footnote{Ecole Polytechnique, CMAP, \sf samy.mekkaoui at polytechnique.edu; This author is supported by the S-G Chair "Risques Financiers", and the "Deep Finance and Statistics" Qube-RT Chair.}
\and 
Huy\^en PHAM\footnote{Ecole Polytechnique, CMAP, \sf huyen.pham at polytechnique.edu; This author is supported by
the BNP-PAR Chair “Futures of Quantitative Finance", the Chair “Risques Financiers", and by FiME,
Laboratoire de Finance des Marchés de l’Energie, and the “Finance and Sustainable Development” EDF -
CACIB Chair.}}

\date{}

\begin{document}

\maketitle

\begin{abstract}

We study infinite-horizon Markov Decision Processes (MDPs) with a continuum of heterogeneous agents interacting through a common noise, without assuming exchangeability. We introduce the framework of Conditional Non-Exchangeable Mean Field MDPs (CNEMF-MDPs) in both a strong formulation and a label–state formulation. We establish the equivalence between these two formulations by showing that the control problem can be lifted to a standard MDP defined on the Wasserstein space $\Pc_{\lambda}(I \times \Xc)$, where $I$ denotes the label (heterogeneity) space, $\Xc$ is the individual state space, and $\lambda$ specifies the fixed distribution of agent labels. Within this framework, we characterize the value function as the unique fixed point of an appropriate Bellman operator acting on $\Pc_{\lambda}(I \times \Xc)$.

Our second contribution is a quantitative analysis of the propagation of chaos for this non-exchangeable setting with common noise. We derive sharp finite-population bounds by comparing the Bellman operator of the finite $N$-agent MDP, defined on the high-dimensional space $\Xc^N$, with its infinite-agent counterpart. This comparison yields explicit constructions of near-optimal policies for the $N$-agent system from $\epsilon$-optimal policies of the limiting CNEMF-MDP.

\end{abstract}

\vspace{5mm}

\noindent {\bf MSC Classification}:  90C40; 60K35; 93E20   

\vspace{5mm}

\noindent {\bf Key words}: Non exchangeable mean field systems; common noise; Markov decision processes; quantitative propagation of chaos; randomized controls; label-state formulation.


\section{Introduction}

The study of large-population and complex systems is a central topic in mathematical modeling, with broad applications across modern society, including social networks, power grid infrastructures, financial markets, and lightning networks. The analysis of such systems naturally leads to the study of McKean-Vlasov (MKV) equations, in which the dynamics of a representative agent depend on the distribution of the population. These equations have been extensively investigated within the frameworks of mean field games (MFG) and mean field control (MFC). We refer the reader to the seminal lectures of P.-L. Lions \cite{lasry2007mean} and the comprehensive monographs by Bensoussan, Frehse, Yam \cite{benetal13},  Carmona and Delarue \cite{carmona_probabilistic_2018, carmona_probabilistic_2018-1} for foundational treatments of these topics.

In this work, we are motivated by the modeling of increasingly complex multi-agent networks, 
where heterogeneity and asymmetric interactions break the symmetry assumptions typically underlying MFG and MFC models. Such systems lead naturally to non-exchangeable interactions. Interest in such models has recently gained momentum, particularly through the introduction of the graphon framework in \cite{lovasz_large_2010}, which was subsequently developed in the context of heterogeneous mean field systems in  \cite{bayraktar2023graphon,bayraktar2023propagation,bayraktar_wu_2023graphon, lacker2023label, coppini2024nonlinear, crutan24}. 
These approaches have also been extended to mean field games with graphon interactions, notably in \cite{xu2025lqgmfg,alvarez2025contracting,caihua21,auretal22,berrak25}. We also refer to \cite{lauriere2025overview} for a very recent review on heterogeneous mean-field games.

Beyond graphon structures, more general non-exchangeable formulations have emerged in mean field control. 
In particular, the space $L^2_\lambda\big(I;\Pc^2(\R^d)\big)$ has been proposed to model families $(\mu^u)_{u \in I}$ of square-integrable probability measures on the agent state metric space $\R^d$, indexed by a label continuous space $I$ $=$ $[0,1]$ encoding heterogeneity, with the integrability condition: $\int_{I}\int_{\R^d} |x|^2 \mu^u(\d x) \lambda(\d u) <  \infty$, where $\lambda$ is a distribution, e.g. uniform, on the label space $I$. 
This setting, introduced in \cite{decrescenzo2024mean, decrescenzo2025linear, kharroubi2025stochastic}  allows for new analytical tools, including an adapted version of Itô’s formula \cite{decrescenzo2024mean} and a tailored notion of convexity \cite{kharroubi2025stochastic}.  
Alternatively, the paper \cite{lacker2023label} proposes a label-state formulation of graphon mean-field game, by considering the joint label/state and then working on the probability space $\Pc(I \times \Xc)$, which can be identified to an almost continuum of probability measures on $\Xc$ by disintegration. 
Recently, \cite{djete2025non} introduced a new framework for non exchangeable mean field systems where the interactions can be controlled.

In this paper, we investigate non-exchangeable mean field models in discrete time under the influence of common noise. The impact of common noise in continuous-time graphon mean field systems has been recently explored in \cite{bayetal25,de2025optimal}. On the discrete-time side, mean field control problems with symmetric (i.e., exchangeable) interactions — often referred to as mean field MDPs — have been studied in \cite{motte2022mean, motte2023quantitative, carmona2019modelfree, bau23, gu2019dynamic}. In particular, \cite{motte2022mean} and \cite{carmona2019modelfree} show that such problems can be lifted to a Markov decision process over the space of probability measures $\Pc(\Xc)$.

Our work extends this line of research to Conditional Non-Exchangeable Mean Field Markov Decision Processes (CNEMF-MDPs). We demonstrate that CNEMF-MDPs can be equivalently for\-mulated as a standard MDP on the Wasserstein space $\Pc_\lambda(I \times \Xc)$ — the space of probability measures over the joint label–state space $I \times \Xc$ with fixed first marginal $\lambda$ on $I$ (typically uniform). This formulation enables us to derive a Bellman dynamic programming equation on $\Pc_\lambda(I \times \Xc)$, which characterizes the value function of the CNEMF-MDP.

Additionally, we establish a quantitative propagation of chaos result that provides explicit convergence rates of the $N$-agent system toward the CNEMF-MDP limit. This result not only justifies the mean field approximation but also enables the construction of approximate optimal policies for the finite-agent system. This also motivates the use of numerical methods for multi-agent reinforcement learning \cite{zhang2021multi} and Mean Field-Learning  \cite{frikha2023actor,lauriere2022learning,carmona2019modelfree}. 

\paragraph{Our work and contributions.}

Inspired by the above discussion,  we consider a social planner control  problem with $N$ cooperative agents in a discrete time non exchangeable setting with common noise over an infinite horizon. The controlled state process  $\bolX^N :=( X^{i,N})_{i \in \llbracket 1, N \rrbracket}$ of the $N$-agent model is given by 
\begin{align}\label{eq:dynamics_NEMFC N agents intro}
\left\{\begin{aligned}  
       X_0^{i,N} &= x_0^i, \\
       X_{t+1}^{i,N} &=  F_N( \frac{i}{N},X_t^{i,N},\alpha_t^{i,N}, \frac{1}{N} \sum_{j=1}^{N} \delta_{(\frac{j}{N},X_t^{j,N},\alpha_t^{j,N})},\epsilon_{t+1}^{i/N}, \epsilon_{t+1}^0), \quad t \in \N.
    \end{aligned}
        \right.
\end{align}
Here, $(x_0^i)_{i \in \llbracket 1, N \rrbracket}$ are the deterministic initial states valued in a compact Polish space $\Xc$ with metric $d$, $(\epsilon_t^{i/N})_{i \in \llbracket 1 ,N \rrbracket, t \in \N^*}$ is a family of mutually i.i.d random variables on some probability space $(\Omega,\Fc,\P)$, and valued in a measurable space $E$ representing the idosyncratic noises, and $(\epsilon_t^0)_{t \in \N^*}$ is another family of i.i.d random variables valued in some measurable space $E^0$ representing the common noise, independent of the idiosyncratic noises. The control of agent $i$ denoted by $\alpha^{i,N} := (\alpha_t^{i,N})_{t \in \N}$ is a process valued in a compact Polish space $A$ with metric $d_A$ and assumed to be adapted with the filtration $\F^N :=(\Fc_t^N)_{t \in \N}$ generated by $\boldsymbol{\epsilon}^N= \big( (\epsilon_t^{i/N})_{ i \in \llbracket 1 , N \rrbracket}, \epsilon_t^0)_{t \in \N^*}$ completed with a family of mutually i.i.d uniform random variables $\boldsymbol{Z}^N = (Z_t^{i/N})_{i \in \llbracket 1, N \rrbracket, t \in \N}$ used for randomizing the controls $(\alpha^{i,N})_{i \in \llbracket 1, N \rrbracket}$. The mean-field interactions between the agents are formalized via the state function $F_N$ where the main difference between the classical mean field setting is the dependance of $F_N$ in the joint empirical measure of label-state-control. Therefore, $F_N$ is  here a $\Xc$-valued measurable function defined on $I \times \Xc \times A \times \Pc(I \times \Xc \times A) \times E \times E^0$ where $\Pc(I \times \Xc \times A)$ is the space of probability measures on $I \times \Xc \times A$. 
We now introduce the admissible set $\bAc_N$ of controls  as
\begin{align}\label{eq : def_admissible_policies_N_agents}
    \bAc_{N} := \Big\lbrace \balpha=(\alpha^{i,N}_t)_{ i\in \llbracket 1 , N \rrbracket,t \in \N} : \alpha^{i,N} \text{ is $\F^N$-adapted for each $i \in \llbracket 1, N \rrbracket$ } \Big \rbrace.
 \end{align}
The goal is then to maximise over $\balpha \in \bAc_{N}$ the following social gain functional
\begin{align}
    V_N^{\balpha}(\bx_0) := \frac{1}{N} \sum_{i=1}^{N} \E \Big[ \sum_{t \in \N} \beta^t  f_N \big(\frac{i}{N} , X_t^{i,N}, \alpha_t^{i,N} , \frac{1}{N} \sum_{j=1}^{N} \delta_{(\frac{j}{N}, X_t^{j,N}, \alpha_t^{j,N})}\big) \Big],
\end{align}
where we set $\bx_0 := (x_0^i)_{i \in \llbracket 1, N \rrbracket} \in \Xc^N$ for the initial state of the $N$ agent system. Here $\beta$ $\in$ $(0,1)$ is a discount factor, and  $f_N$ is a bounded measurable real-valued function defined on $I \times \Xc \times A \times \Pc(I \times \Xc \times A)$. 
We want to highlight that in the $N$-agent MDP, the reward and state functions depend upon $N$ as the interactions couplings may vary with the number of agents. 
The value function of this problem is defined on $\Xc^N$ as
\begin{align}\label{eq : value function N-agent MDP Intro}
    V_N(\bx_0) :=  \underset{\balpha \in \bAc_{N}}{\sup} V_N^{\balpha}(\bx_0). 
\end{align}

Let us now formulate the asymptotic mean-field formulation for the  non exchangeable mean-field control  problem when the number of agents $N$ goes to infinity.  To this end, we introduce two limiting formulations, and we will discuss their interconnections. 

\vspace{1mm}

\noindent {\it Strong formulation:}  As the dynamics of the agent state are no longer identically distributed, we hence have a continuum of agents labeled by $u \in I =[0,1]$ with the following controlled state process  dynamics $\bolX :=(X^u)_{u \in I}$  
\begin{align}\label{eq:dynamics_NEMFC_strong_formulation_intro}
    \left\{\begin{aligned}  
       X_0^u &=  \xi^u, \\
       X_{t+1}^u &=  F(u,X_t^u,\alpha_t^u, \P^0_{(X_t^v,\alpha_t^v)}(\d x , \d a ) \d v ,\epsilon_{t+1}^u, \epsilon_{t+1}^0), \quad t \in \N, \quad u \in I,
    \end{aligned}
        \right.
\end{align}
where each initial state $\xi^u$ is a $\Gc^u = \sigma(\Gamma^u)$-measurable random variable with $\Gc^u$ a $\sigma-$algebra generated by a $G$-valued random variable $\Gamma^u$ (where $G$ will denote the initial information space) independent of $\F^u := (\Fc_t^u)_{t \in \N}$ the filtration generated by $(\epsilon_t^u,\epsilon_t^0)_{t \in \N^*}$, where $(\epsilon_t^u)_{u \in I, t \in \N^*}$ is a family of mutually i.i.d random variables valued in $E$ and $(\epsilon_t^0)_{t \in \N^*}$ is another family of i.i.d random variables valued in $E^0$, completed by a family of mutually i.i.d uniform random variables $(Z^u_t)_{u \in I,t \in \N}$. $\P^0$ denotes the conditional  law knowing the common noise $\epsilon^0$. Here, $F$ denotes the state function for the asymptotic mean field control problem and stands for a $\Xc$-valued measurable function defined on $I \times \Xc \times A \times \Pc(I \times \Xc \times A) \times E \times E^0$.
We now introduce the set $\A^{OL}$ of open-loop policies as the set of sequences $\mathrm{a}$ $=$ $(\mathrm{a_t})_t$ of Borel maps 
$\mathrm{a_t}$ on $I \times G \times E^t \times (E^0)^t$ into $A$, for $t$ $\in$ $\N$,  with convention that $\mathrm{a}_0$ is defined on $I \times \Xc$, and  the admissible set of randomized open-loop controls in the strong formulation $\Ac^{S}$ as
\begin{align}\label{eq : def_admissible_controls_strong_formulation}
    \Ac^{S}  := \big \lbrace \balpha = (\alpha_t^u)_{u \in I, t \in \N} : \alpha_t^u = \mathrm{a}_t(u, \Gamma^u, (\epsilon^u_{s})_{s \leq t}, (\epsilon^0_{s})_{s \leq t } ), \;  t \in  \N, \;  \lambda \; \text{-a.e.} \;  u \in I,  
      \mbox{ for some } \; \mathrm{a} \in \A^{OL} \big \rbrace.
\end{align}
The non-exchangeable mean-field control problem consists then in maximizing  over $\Ac^S$ the following gain 
\begin{align}
   V^{\balpha}_{S}(\boxi) := \int_I \E \Big[ \sum_{t \in \N} \beta^t f \big(u,X_t^u,\alpha_t^u , \P^0_{(X_t^v,\alpha_t^v)}(\d x , \d a ) \d v \big) \Big] \d u,
\end{align}
for $\boxi$ $=$ $(\xi^u)_{u\in I}$, and where $f$ stands for the reward function as a bounded real-valued measurable function defined on $I \times \Xc \times A \times \Pc(I \times \Xc \times A)$.  
 The value function of this optimization problem is then defined 
as 
 \begin{align}\label{eq:value_function-strong-formulation}
     V_S(\boxi) &:= \;  \underset{\balpha \in \Ac^{S}}{\sup}V^{\balpha}_{S}(\boxi),
 \end{align}
and we will show that $V_S(\boxi)$ depend on $\boxi$ $=$ $(\xi^u)_u$ only through the disintegration distribution 
$\P^\lambda_{\boxi}(\d u,\d x)$ $:=$ $ \lambda(\d u)\P_{\xi^u}(\d x)$ $\in$  $\Pc_{\lambda}(I\times\Xc)$ the set of probability measures on  $I\times\Xc$ with first marginal equal to the uniform distribution $\lambda(\d u)$ $=$ $\d u$ on $I$, and where $\P_{\xi^u}$ denotes the distribution of the $\Xc$-valued r.v. $\xi^u$ under $\P$. 

\vspace{1mm}

\noindent  {\it Weak formulation :} The strong formulation introduced above can be equivalently recast in a label–state formulation, in the spirit of \cite{lacker2023label}. More precisely, we introduce a random variable $U$ taking values in $I$, whose law satisfies $\P_{U} = \lambda$. This variable is used to encode the labeling of the agents. The controlled state process $X=(X_t)_{t \in \N}$ evolves according to the following dynamics:
\begin{align}\label{eq:dynamics_NEMFC_weak_formulation_intro}
    \left\{\begin{aligned}  
       X_0 &=  \xi, \\
       X_{t+1} &=  F(U,X_t,\alpha_t, \P^0_{(U,X_t,\alpha_t)} ,\epsilon_{t+1}, \epsilon_{t+1}^0), \quad t \in \N,
    \end{aligned}
        \right.
\end{align}
where $\xi$ is a $\Gc = \sigma(\Gamma)$ measurable random variable with $\Gc$ a $\sigma$-algebra generated by a $G$-valued random variable $\Gamma$ independent of $\F = (\Fc_t)_{t \in \N}$ generated by $\boldsymbol{\epsilon}=(\epsilon_t,\epsilon_t^0)_{t \in \N^*}$ where $(\epsilon_t)_{t \in \N^*}$ is a family of mutually i.i.d random variables valued in $E$ and $(\epsilon_t^0)_{t \in \N^*}$ is another family of i.i.d random variables valued in $E^0$, completed by a family of mutually i.i.d uniform random variables $(Z_t)_{t \in \N^*}$. The admissible set of randomized open-loop controls in the weak formulation $\Ac^{W}$ is defined as 
\begin{align}\label{eq : def_admissible_controls_weak_formulation}
    \Ac^{W}  := \big \lbrace \alpha = (\alpha_t)_{ t \in \N} : \alpha_t = \mathrm{a}_t(U, \Gamma, (\epsilon_{s})_{s \leq t}, (\epsilon^0_{s})_{s \leq t}), \; t \in \N, \mbox{ for some } \; \mathrm{a} \in \A^{OL}   \big \rbrace. 
\end{align}
The non-exchangeable mean-field control problem consists in maximizing over $\Ac^W$ the following gain 
\begin{align}
    V^{\alpha}_{W}(\xi) :=  \E \Big[ \sum_{t \in \N} \beta^t f \big(U,X_t,\alpha_t , \P^0_{(U,X_t,\alpha_t)}\big) \Big], 
\end{align}  
and the associated value function is given by 
 \begin{align}\label{eq:value_function-weak-formulation}
     V_W(\xi) &:= \;  \underset{\alpha \in \Ac^{W}}{\sup} V^{\balpha}_{W}(\xi). 
\end{align}
We will show that $V_W(\xi)$ depend on $\xi$ only through the joint distribution $\P_{(U,\xi)}(\d u , \d x) = \lambda(\d u ) \P_{\xi|U=u}(\d x) \in \Pc_{\lambda}(I \times \Xc) $ where $\P_{\xi|U=u}$ denotes the law of the conditional $\Xc$-valued r.v $\xi |U=u$. Formally, in this weak formulation, the conditional law of $X|U=u$ corresponds to the law of $X^u$ for $\lambda$-a.e $u \in I$ in the strong formulation.


The control problems \eqref{eq:dynamics_NEMFC_strong_formulation_intro} - \eqref{eq:value_function-strong-formulation} and \eqref{eq:dynamics_NEMFC_weak_formulation_intro}-\eqref{eq:value_function-weak-formulation} refereed as non exchangeable mean field Markov Decision Process with common noise (CNEMF-MDP) is the natural extension of the classical mean-field setting as studied for instance in \cite{motte2022mean,motte2023quantitative}.  We emphasize that the general state equations \eqref{eq:dynamics_NEMFC N agents intro}, \eqref{eq:dynamics_NEMFC_strong_formulation_intro} and \eqref{eq:dynamics_NEMFC_weak_formulation_intro} are more general than the usual graphon frameworks which we can encounter in the current litterature as they may include graphons depending upon the state and even the control terms. 


The contributions of the present work are threefold.  

\begin{itemize}
    \item  We first show, following similar arguments to the ones presented in \cite{motte2022mean}, how the control problems \eqref{eq:dynamics_NEMFC_strong_formulation_intro}-\eqref{eq:value_function-strong-formulation} and \eqref{eq:dynamics_NEMFC_weak_formulation_intro}-\eqref{eq:value_function-weak-formulation} can be recasted as a standard mean field control  on the space $\Pc_{\lambda}(I \times \Xc)$ and how the value functions \eqref{eq:value_function-strong-formulation}-\eqref{eq:value_function-weak-formulation} can be interpreted as a fixed point of a suitable Bellman operator. Moreover, we show that for $\boxi = (\xi^u)_u$ and $\xi$ such that $\P_{\xi^u} = \P_{\xi | U=u}$ for $\lambda-\text{a.e } u \in I$, we have
    \begin{align}\label{eq : equivalence_strong_weak_formulation}
        V_S(\boxi) = V_W(\xi)=\hat{V}(\mu),
    \end{align}
    where $\mu(\d u ,\d x) = \lambda(\d u)\P_{\xi^u}(\d x)  = \lambda(\d u )\P_{\xi|U=u} \in \Pc_{\lambda}(I \times \Xc)$ and $\hat{V}$ is defined in \eqref{eq : value function canonical formulation}.
    We also show how to construct optimal randomized feedback controls for the strong and weak control problems.
    We adapted the proofs proposed in \cite{motte2022mean,motte2023quantitative} by taking advantage of their measurable coupling lemma in $\Pc(\Xc)$. The construction of optimal randomized feedback  controls in the CNEMF-MDP follows from Appendix A in \cite{motte2023quantitative}. Moreover,  the optimal randomized feedback control  for the weak and strong formulation share the same control policy $(\mathrm{a}_t)_{t \in \N}$ as a consequence of the verification result in Proposition \ref{proposition : verification theorem} together with  Proposition \ref{lemma: equivalence_between_weak_strong}.
    \item We then show the convergence of the value function $V_N$ defined in \eqref{eq : value function N-agent MDP Intro} towards the value function $\hat V$ defined indifferently in  \eqref{eq:value_function-strong-formulation} or \eqref{eq:value_function-weak-formulation} from \eqref{eq : equivalence_strong_weak_formulation} . More precisely, we show  that for all $\bx :=(x^i)_{i \in \llbracket 1, N \rrbracket} \in \Xc^N$  and $\bu = (\frac{i}{N})_{i \in \llbracket 1, N \rrbracket}$
    \begin{align}\label{eq : convergence value_functions intro}
    \big | V_N \big(\bx \big) - \hat V  \big(\mu_N^{\lambda}[\bu , \bx ]\big) \big | \underset{N \to \infty}{\longrightarrow} 0,
\end{align}
where $\mu_N^{\lambda}[\bu,\bx]$ is the lifted measure of $\frac{1}{N} \sum_{i=1}^{N} \delta_{(\frac{i}{N}, x^i)}$  in $\Pc_{\lambda}(I \times \Xc)$ with same weights over each interval, i.e.,  defined as 
\begin{align}\label{eq : mu_Ncanonicalmeasure}
    \mu_N^{\lambda}[\bu,\bx](\d u ,\d x) &:= \; \lambda(\d u)  \Big( \sum_{i=1}^{N} \mathds{1}_{[\frac{i-1}{N} , \frac{i}{N})}(u) \delta_{x^i}(\d x) \Big). 
\end{align}
Under additional assumptions on the functions $F_N,f_N,F$ and $f$, we also provide  a quantitative propagation of chaos for  \eqref{eq : convergence value_functions intro}. More precisely, we will show that \eqref{eq : convergence value_functions intro} can be rewritten for some positive constant $C$ (depending on the data of the problem)
\begin{align}\label{eq : propagation_of_chaos_intro}
    \big | V_N(\bx) - \hat V(\mu_N^{\lambda}([ \bu , \bx] \big)  \big| \leq C \big( M_N^{\gamma} + \epsilon_N^f + (\epsilon_N^F)^{\gamma} \big),
\end{align}
where $\epsilon_N^f$ and $(\epsilon_N^F)^{\gamma}$ should be respectively understood formally as convergence rates of $F^N \to F$ and $f^N \to f$. Here, $M_N$ stands for the mean rate of convergence in Wasserstein distance of the empirical measure on the product space $I \times \Xc \times A$ (see \cite{fournier2015rate}) and $\gamma \in (0,1]$ is an explicit constant equal to the Hölder regularity of the value function $\hat V$ defined in \eqref{eq : equivalence_strong_weak_formulation}. Compared to the result in \cite{motte2023quantitative}, these additional terms $\epsilon_N^f$ and $\epsilon_N^F$ are due to the potential dependence of the state and reward functions with respect to the $N$-agent. The proof of \eqref{eq : propagation_of_chaos_intro} is obtained as in \cite{motte2023quantitative} by analyzing the "proximity" as $N$ tends to $\infty$ between the Bellman operators $\Tc_N$ of the $N$-agent control problem (see \eqref{eq : Bellman_Operator_N_agents} and the Bellman operator of the CNEMF-MDP $\Tc$  (see \eqref{eq : Bellman operator def 1}). 


\item Finally, in a last part, we also show how one can construct approximate policies for the $N$-agent MDP from $\epsilon$-optimal control for the CNEMF-MDP. Precisely, we will show how to build $\Oc \big(\epsilon + M_N^{\gamma} + \epsilon_N^f + (\epsilon_N^F)^{\gamma}  \big)$ optimal policies for the $N$-agent MDP starting from an $\epsilon$-optimal randomized feedback policy for the CNEMF-MDP.
\end{itemize}

\paragraph{Outline of the paper.} The structure of the paper is as follows. Section 2 introduces the general framework of the control problem, presenting its various formulations and stating the assumptions required for the subsequent analysis. Section 3 is devoted to the main results: we establish the connection between the weak and strong mean-field formulations, as well as the link between the  $N$-agent optimization problem and its mean-field counterparts, through a quantitative propagation-of-chaos result for the value functions and the construction of $\epsilon$-optimal feedback controls. Section 4 contains the proofs of the main theorems. Finally, Appendix A collects several technical results used in the proofs of Section 4, while Appendix C gathers standard results on the Bellman operator $\Tc_N$ for the $N$-agent MDP, which are instrumental in the propagation-of-chaos analysis.

\paragraph{Notations.}
\begin{enumerate}
\item [$\bullet$] Given two measurable spaces $(E, \Ec)$ and $(F,\Fc)$ and for any measurable function $\Phi : E \to F$
and positive measure $\mu$ on $(E,\Ec)$, we denote by $\Phi \sharp \mu$ the pushforward measure on $(F,\Fc)$, i.e.,   
$\Phi {\sharp} \mu(B) = \mu\big( \Phi^{-1}(B) \big), \;  B \in \Fc$. We denote by $L^0( E;F)$ the set of measurable functions from $E$ into $F$, by $L^{\infty}(E)$ the set of real-valued bounded functions on $E$ and by $L^{\infty}_m(E) := L^{\infty}(E) \cap L^0(E;\R)$, i.e., the set of real-valued bounded measurable functions on $E$. 
\begin{enumerate}
    \item  We denote by $\Pc(E)$ the set of probability measures on $E$ equipped with the the $\sigma$-algebra of the weak convergence $\Cc(E)$ . When the measurable space $(E,\Ec)$ is endowed with probability measure $\P$, and $X : (E,\Ec) \to (F,\Fc)$ is a random variable, we will denote by $\P_X$ its probability measure on the measurable space $(F,\Fc)$, i.e., $\P_X = X \sharp \P$.
   \item  A probability kernel $\nu$ on $E \times F$ is a measurable mapping from $(E,\Ec)$ into $(\Pc(F),\Cc(F))$ . Given a probability measure $\mu$ on $(E,\Ec)$ and a probability kernel $\nu$, we define the probability measure $\mu\nu$ on $(E \times F, \Ec \otimes \Fc)$ by  
\begin{align}\label{eq : measure_from_disintegration}
    (\mu\nu)(A \times B) = \int_{A \times B} \mu(\d x_1)\nu(x_1, \d x_2), \quad \forall A \in \Ec, B \in \Fc.
\end{align}
For a product space $E_1 \times E_2$, we denote by $\pr_i$  the projection function $(x_1,x_2)$ $\in$ $E_1\times E_2$ $\mapsto$ $x_i$ $\in$ $E_i$, $i$ $=$ $1,2$.   
For a product space $E_1 \times E_2 \times E_3$, we denote by $\pr_{ij}$ the projection function $(x_1,x_2,x_3) \in E_1 \times E_2 \times E_3 \mapsto (x_i,x_j) \in E_i \times  E_j$, $i,j$ $=$ $1,2,3$, $i\neq j$. 
  \item  Given $I :=[0,1]$ endowed with its Borel $\sigma$-algebra $\Bc(I)$ and $\lambda$ its uniform measure , we introduce the Polish space
   $ \Pc_{\lambda}(I \times \Xc) := \big \lbrace \mu \in \Pc(I \times \Xc) : \text{$\text{pr}_{1} \sharp \mu$ = $\lambda $} \big \rbrace$.
  \item  
When $(\Xc,d)$ is a metric space, the set $\Pc(\Xc)$ of probability measures on $\Xc$ is equipped with the Wasserstein distance $ \Wc(\mu,\mu') = \underset{\pi \in \Pi(\mu,\mu')}{\inf} \int_{\Xc \times \Xc} \d(x,x') \pi(\d x,\d x')$ where $\Pi(\mu,\mu')$ is the set of probability measures $\pi$ on $\Xc \times \Xc$ such that $\pr_{1} \sharp\pi = \mu$ and $\pr_{2} \sharp\pi= \mu'$. We recall the Kantorovitch duality result
\begin{align}\label{eq : Kantorovitch duality}
    \Wc(\mu,\mu') =  \underset{\phi \in \text{Lip}_{1}}{\sup} \int_{\Xc} \phi(x) (\mu - \mu')(\d x),
\end{align}
where $\text{Lip}_{1}$ is the set of all Lispchitz functions on $\Xc$ with Lipschitz constant bounded by 1. 
\end{enumerate}

\item [$\bullet$] For two metric spaces $(\Xc,d)$ and $(A,d_A)$, we endow the product space $\Xc \times A$ with the metric 
\begin{align}
    \bd\big( (x,a),(x',a') \big) &:= \;  d(x,x') + d_A(a,a'),\quad (x,x') \in \Xc^2, (a,a') \in A^2. 
\end{align}
\begin{enumerate}
\item Similarly, we endow the space $\Xc^N$ and $A^N$ with respectively the metrics
\begin{align}
\begin{cases}
    \bd_N(\bx,\bx') &:= \frac{1}{N} \sum_{i=1}^{N} \d (x^i, x'^i), \quad \bx=(x^i)_{i \in \llbracket 1, N \rrbracket}, \bx' = (x'^i)_{i \in \llbracket 1, N \rrbracket} \in \Xc^N, \\
    \bd_{A,N}(\ba,\ba') &:=  \frac{1}{N} \sum_{i=1}^{N} \d_A(a^i,a'^i), \quad \ba = (a^i)_{i \in \llbracket 1, N \rrbracket}, \ba'=(a'^i)_{i \in \llbracket 1, N \rrbracket} \in A^N.
\end{cases}
\end{align}
Finally, we endow the space $(\Xc \times A)^N$ with the metric
\begin{align}
    \bd_N\big((\bx,\ba),(\bx',\ba')\big) := \frac{1}{N} \sum_{i=1}^{N}  \bd((x^i,a^i),(x'^i,a'^i)\big) = \bd_N(\bx,\bx') + \bd_{A,N}(\ba, \ba').
\end{align}

\item Given $\bx = (x^i)_{i \in \llbracket 1, N \rrbracket} \in \Xc^N$, $\ba = (a^i)_{i \in \llbracket 1, N \rrbracket} \in A^N$, $\bu := (\frac{i}{N})_{i \in \llbracket 1, N \rrbracket}$, $I =[0,1]$ and ${I_i^N} :=[\frac{i-1}{N}, \frac{i}{N})$ for any $i \in \llbracket 1, N \rrbracket$, we introduce
\begin{align}\label{eq : Empirical Measures def}
\begin{cases}
     &\mu_N \big[ \bu , \bx \big]:= \frac{1}{N} \sum_{i=1}^{N} \delta_{(\frac{i}{N}, x^i)},  
     \quad \mu_N \big[\bu, \bx, \ba \big] := \frac{1}{N} \sum_{i=1}^{N}  \delta_{(\frac{i}{N}, x^i,a^i )},  
     \\
    &\mu_N^{\lambda} \big[\bu , \bx \big]:= \Big(\sum_{i=1}^{N} \mathds{1}_{I^N_i}( u) \delta_{x^i}(\d x)\Big) \d u, \quad \mu_N^{\lambda} \big[\bu , \bx , \ba  \big]:= \Big(\sum_{i=1}^{N} \mathds{1}_{I^N_i}( u) \delta_{(x^i,a^i)}(\d x,\d a)\Big) \d u 
\end{cases}
\end{align}
and we recall that 
\begin{align}\label{eq : inequality wasserstein empirical measures}
    \Wc \Big( \mu_N \big[\bx,\ba \big], \mu_N \big[\bx',\ba'\big] \Big) &\leq \;  \bd_N\big((\bx,\ba),(\bx',\ba')\big).
\end{align}
\item We denote by $\Delta_{I \times \Xc} (\text{resp } \Delta_{I \times \Xc \times A})$ the diameter of the compact metric space $I \times \Xc$ (resp $I \times \Xc \times A)$ and  we define
\begin{align}\label{eq : M_N definition}
    M_N := \underset{\mu \in \Pc(I \times \Xc \times A)}{\sup} \E \big[\Wc(\mu_N, \mu) \big],
\end{align}
where $\mu_N := \frac{1}{N} \sum_{i=1}^{N} \delta_{Y_i}$, with $(Y_i)_{1 \leq i \leq N}$ are i.i.d random variables with law $\mu$, and we recall from \cite{fournier2015rate} that $M_N \underset{N \to \infty}{\to} 0$  with non asymptotic bounds depending on the dimension of the space. We recall that $\frac{1}{N} = o(M_N)$ regardless of the dimension.
\end{enumerate}
\item [$\bullet$] In the sequel, $C$ will denote a generic constant depending of the parameters of the model which can vary from line to line.

\end{enumerate}

\section{Problem formulation and assumptions}

\subsection{Global assumptions for the Bellman fixed point equation}\label{subsec : global_assumptions}

Let $\Xc$ and $A$ be two compact Polish spaces, representing the state and action spaces, equiped with their metrics $d$ and $d_A$. We also consider the interval $I = [0,1]$ representing the continuum of heterogeneous agents endowed with its Borel $\sigma$-algebra and with the Lebesgue measure $\lambda$ which will we denote interchangeably in the sequel as $\lambda(\d u) := \d u$. We denote by $G,E,E^0$  three measurable spaces representing respectively the initial information, idiosyncratic noise, and common noise spaces. 

We now fix a probability space $(\Omega,\Fc,\P)$ on which all the following random variables will be defined, and we fix functions
\begin{align}
    F : I \times \Xc \times A \times \Pc(I \times \Xc \times A) \times E \times E^0 \to \Xc, \quad  \quad     f : I \times \Xc \times A \times \Pc(I \times \Xc \times A) \to \R,
\end{align}
assumed to satisfy the following assumptions.
\begin{Assumption}\label{assumptions: F and f}
The functions $F$ and $f$ are Borel measurable. There exist constants $L_{F}, L_f\geq 0$ such that
\begin{itemize} 
\item[(i)] $$ \E \Big[d \big(F(u,x,a,\mu,\epsilon, e^0),F(u',x',a,\mu' , \epsilon,e^0) \big) \Big] \leq L_F \big ( |u-u'| + \d(x,x') + \Wc(\mu,\mu') \big)$$
 \item[(ii)] 
 $$
\big | f(u,x,a,\mu)  - f(u,x',a,\mu')  \big | \leq  L_f \big( |u-u'| +  \d(x,x') + \Wc(\mu,\mu') \big) 
 $$
\end{itemize} 
for every  $u,u' \in I$, $x,x'\in\Xc$, $a \in A$, $\mu,\mu' \in \Pc(I \times \Xc \times A)$ and $e^0 \in E^0$, and 
where $\epsilon$ is a $E$-valued random variable on $(\Omega,\Fc,\P)$. 
\end{Assumption}

\subsection{The weak/label-state formulation}\label{subsec : label-state formulation}

We consider, in the weak formulation of the non-exchangeable mean field controlled system, the following state process:
\begin{align}\label{eq:dynamics_NEMFC_weak_formulation}
    \left\{\begin{aligned}  
       X_0 &= \;  \xi, \\
       X_{t+1} &= \;   F(U,X_t,\alpha_t, \P^0_{(U,X_t,\alpha_t)} ,\epsilon_{t+1}, \epsilon_{t+1}^0), \quad t \in \N,
    \end{aligned}
        \right.
\end{align}
where $U$ is a random variable on $I$ distributed according to $\lambda$, $\epsilon$ $=$ $(\epsilon_t)_{t \in \N^*}$ is a family of mutually i.i.d. random variables valued in $E$ with law given by $\lambda_{\epsilon}$, and $\epsilon^0$ $=$ $(\epsilon_t^0)_{t \in \N^*}$ is another family of i.i.d. random variables valued in $E^0$. 
$U$, $\eps$ and $\eps^0$ are mutually independent, and   $\P^0$ denotes the conditional law knowing the common noise $\epsilon^0$.

The initial condition is given by a $\Xc$-valued random variable $\xi$ which is $\sigma(U) \vee \Gc$-measurable where  $\Gc$ is the $\sigma$-algebra  generated by a random variable $\Gamma$ representing the initial information, independent of the filtration generated by $\boldsymbol{\epsilon} = (\epsilon_t,\epsilon_t^0)_{t \in \N^{\star}}$. 
We will assume that there exists a uniform random variable $Z \sim \Uc([0,1])$ which is $\Gc$-measurable and independent of $\xi$. We refer to this assumption as the \textbf{randomization hypothesis in the weak formulation}.

The admissible control  is given by open-loop controls $\alpha=(\alpha_t)_{t \in \N}$ which are adapted to the filtration $\F =(\Fc_t)_{t \in \N}$ generated by $(U,\Gamma,\epsilon, \epsilon^0)$, i.e., $\Fc_t = \sigma\big( U, \Gamma, (\epsilon_s)_{s \leq t}, (\epsilon_s^0)_{s \leq t} \big)$. Therefore, one can represent an admissible control process $\alpha$ through a collection of Borel maps (called open-loop policies) $\mra_t : I \times G \times E^t \times (E^0)^t \to A$, $t$ $\in$ $\N$ as  
\begin{align}
    \alpha_t = \mra_t(U,\Gamma,(\epsilon_s)_{s \leq t},(\epsilon_s^0)_{s \leq t}), \quad t \in \N, \quad \P-\text{a.s},
\end{align}
with convention that $\mathrm{a}_0$ is defined on $I \times G$. Such a control $\alpha=(\alpha_t)_{t \in \N}$ is called a randomized open-loop control and we denote by $\Ac^{W}$ this set of controls which has been formally defined in \eqref{eq : def_admissible_controls_weak_formulation}.
For $\alpha \in \Ac^{W}$ and an initial condition $\xi$, we write the expected gain $V_W^{\alpha}$ as
\begin{align}\label{eq : cost functional V weak formulation}
      V_W^{\alpha}(\xi) &:= \;  \E \Big[ \sum_{t \in \N}  \beta^t f(U,X_t,\alpha_t, \P^0_{(U,X_t,\alpha_t)}) \Big].
\end{align}
The value function of the conditional non exchangeable mean field control Markov decision processes (CNEMF-MDP) is then defined in the weak formulation by
\begin{align}\label{eq : value function weak formulation}
    V_W(\xi ) &:= \;  \underset{\alpha \in \Ac^W}{\sup} V^{\alpha}_{W}(\xi).
\end{align}

\begin{Remark}
\normalfont
The proposed model \eqref{eq:dynamics_NEMFC_weak_formulation} is more general than those studied in the literature. However, this level of generality requires a regularity assumption on the label $u \in I$. Such an assumption is also standard when deriving propagation of chaos results (see \cite{cao2025probabilistic, coppini2024nonlinear} for a  blockwise Lipschitz continuity assumption). Formally, given a graphon interaction structure $G \in L^2(I\times I)$, one could consider
\begin{align}\label{eq : graphon_based_models}
\begin{cases}
    X_0 = \xi, \\
    X_{t+1} = \hat{F}\!\Big( X_t, \alpha_t,
    \dfrac{\int_I G(U,v)\, \P^0_{(U,X_t,\alpha_t)}(\d v,\d x, \d a)}
    {\int_I G(U,v)\, \d v},
    \epsilon_{t+1}, \epsilon_{t+1}^0 \Big),
\end{cases}
\end{align}
where $\hat{F}$ is a measurable mapping from 
$ \Xc \times A \times \Pc(\Xc \times A) \times E \times E^0$ into $\Xc$, and we assume that
\[
\int_I G(u,v)\, \d v > 0 \quad \text{for $\lambda-$a.e } u \in I,
\]
that is, there are no isolated particles. The model \eqref{eq : graphon_based_models} is a particular instance of the class of models encompassed by our present setting.
\end{Remark}

\subsection{The strong formulation}

We now introduce the strong formulation  for the non exchangeable mean field controlled system \footnote{In the following, given a transition kernel $\nu$ on $I \times (\Xc \times A)$ and for the Lebesgue measure $\lambda$ on $I$, we write  indifferently the measure product $\lambda \nu$ as $\lambda(\d u ) \nu(v, \d x , \d a)$ or $\nu(v , \d x , \d a) \d v$.}:
\begin{align}\label{eq:dynamics_NEMFC}
    \left\{\begin{aligned}  
       X_0^u &= \;  \xi^u, \\
       X_{t+1}^u &= \;   F(u,X_t^u,\alpha_t^u, \P^0_{(X_t^v,\alpha_t^v)}(\d x , \d a ) \d v   ,\epsilon_{t+1}^u, \epsilon_{t+1}^0), \quad t \in \N, \quad u \in I, 
    \end{aligned}
        \right.
\end{align}
where $(\epsilon_t^u)_{u \in I, t \in \N^*}$ is a family of mutually i.i.d. random variables valued in $E$ with law given by $\lambda_{\epsilon}$. We denote for  $\lambda- \text{a.e } u \in I$, the filtration $\F^u = (\Fc_t^u)_{t \in \N}$ generated by $(\Gamma^u,\epsilon^u, \epsilon^0)$, i.e, $\Fc_t^u = \sigma \big( \Gamma^u, (\epsilon^u_s)_{s \leq t}, (\epsilon_s^0)_{s \leq t} \big)$, where $\Gamma^u$ represents the initial information available for agent $u$, and we denote by $\Gc^u = \sigma(\Gamma^u)$ its generated $\sigma-$algebra. We consider that $\Gc^u$ is independant of the filtration generated by $(\epsilon_t^u,\epsilon_t^0)_{t \in \N^{\star}}$ and $(\Gamma^u)_{u \in I}$ is a family of mutually independent random variables.
For $\lambda-\text{a.e } u \in I$, the initial condition is given by a $\Xc$-valued random variables $\xi^u$ which is $\Gc^u$ measurable. Moreover, we also assume that there exists an independent family $\lbrace Z^u : u \in I \big \rbrace$ of uniform random variable such that  for $\lambda-\text{a.e }$ $u \in I$, $Z^u$ is $\Gc^u$-measurable and independent of $\xi^u$. We  refer to this assumption as the \textbf{randomization hypothesis in the strong formulation}.

\begin{Remark}\label{rmk : randomization_uniform_decimals}
\normalfont
    Following Lemma 2.2.1 in \cite{kallenberg2002foundations} and for $\lambda-\text{a.e } u \in I$ , one can extract from $Z^u$ an i.i.d sequence of uniform random variables $(Z_t^u)_{t \in \N}$ which are $\Gc^u$-measurable and independant of $\xi^u$. This sequence will be used for the randomization of the subsequent actions.
\end{Remark}

As discussed in \cite{sun06}, it is known that we cannot construct a joint measurable version of the map $(u,\omega) \mapsto \epsilon^u(\omega)$ due to the fact that $\lbrace \epsilon^u : u \in I \rbrace$ is an uncountable collection of i.i.d. random variables. However, from a control perspective, only the measurability at the level of the law is required to define the control problem. Therefore, the following assumptions will ensure the well-posedness of the system.

\begin{Assumption}\textnormal{(Measurability in law of the initial information).}\label{assumption : measurability initial information}
\begin{itemize}
    \item [(1)] The function $        I \ni u \mapsto \P_{\Gamma^u} \in \Pc(G) \text{ is measurable}$.
    \item [(2)] We have   $ \P_{\Gamma^u} = \P_{\Gamma |U=u}, \quad \text{ for $\lambda-\text{a.e } u \in I$ }$ where $(\P_{\Gamma | U=u})_{u \in I}$ denotes a measurable version of the conditional law of $\Gamma$ given $U$ from Section \ref{subsec : label-state formulation}.
\end{itemize} 
\end{Assumption}

\begin{Definition}\textnormal{(Admissible conditions).}\label{assumption : Admissible conditions for the strong formulation}
\begin{itemize}
    \item [(1)] We say that $\boldsymbol{\xi} =(\xi^u)_{u \in I}$ is an admissible initial condition if there exists a Borel measurable function $\xi_0 :  I \times G \to \Xc$ such that we have for $\lambda-a.e$ $u \in I$
    \begin{align}
        \xi^u = \xi_0(u,\Gamma^u), \quad \P-\text{a.s}.
    \end{align}
    \item [(2)] We say that $\balpha = (\alpha^u)_{u \in I}$ is an admissible control if there exists a collection of Borel maps $(\mra_t) : I \times G \times E^t \times (E^0)^t \to A$ such that for $\lambda-a.e$ $u \in I$
    \begin{align}
        \alpha_t^u = \mathrm{a}_t \big(u , \Gamma^u , (\epsilon^u_{s})_{s \leq t}, (\epsilon^0_s)_{s \leq t} \big), \quad t \in \N, \quad \P-\text{a.s},
    \end{align}
    with convention that $\mathrm{a}_0$ is defined on $I \times G$.
\end{itemize}
Such a collection of controls $\balpha=(\alpha^u)_{u \in I}$ is called randomized-open loop control in the strong formulation and we denote by $\Ac^{S}$ this set of controls, which has been formally defined in \eqref{eq : def_admissible_controls_strong_formulation}
\end{Definition}

\begin{Remark}
\normalfont
    In the standard mean-field setting, the initial state is a measurable function of $\Gamma$ but here we need to assume the Borel-measurability with respect to the label $u \in I$ so that by Assumption \ref{assumption : measurability initial information}, the mapping $I \ni u \mapsto \P_{\xi^u} \in \Pc(\Xc)$ is measurable. 
\end{Remark}

We now give the following Lemma which ensures the well posedness of the strong formulation for the controlled system \eqref{eq:dynamics_NEMFC}.

\begin{Lemma}\label{lemma : measurability of mapping}
    Under Assumptions \ref{assumptions: F and f} and \ref{assumption : measurability initial information}, for any admissible initial condition $\boldsymbol{\xi}$ and any admissible control $\balpha \in \Ac^S$, the mapping $I \ni u \mapsto \P_{(X_t^u,\alpha_t^u,(\epsilon_s^0)_{s \leq t} )} \in \Pc(\Xc \times A \times (E^0)^t)$ is measurable for any $t \in \N$.
\end{Lemma}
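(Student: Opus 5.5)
We argue by induction on $t$, carrying a stronger statement. Write $\boldsymbol{\epsilon}^0_{\le t}:=(\epsilon^0_s)_{s\le t}$. The claim to propagate is that for every $t$ there is a Borel map
\[
\Phi_t : I \times G \times E^t \times (E^0)^t \to \Xc
\]
such that, for $\lambda$-a.e. $u$, we have $X_t^u=\Phi_t(u,\Gamma^u,(\epsilon^u_s)_{s\le t},\boldsymbol{\epsilon}^0_{\le t})$ $\P$-a.s.

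Granting this representation, the joint law is a pushforward. Indeed, $(X_t^u,\alpha_t^u,\boldsymbol{\epsilon}^0_{\le t})$ is the image of $(u,\Gamma^u,(\epsilon^u_s)_{s\le t},\boldsymbol{\epsilon}^0_{\le t})$ under the Borel map $(\Phi_t,\mathrm a_t,\pr)$. The variables $\Gamma^u$, $(\epsilon^u_s)_s$ and $\epsilon^0$ are mutually independent, the idiosyncratic and common noises have fixed laws $\lambda_\epsilon^{\otimes t}$ and $\P_{\epsilon^0}^{\otimes t}$, and $u\mapsto\P_{\Gamma^u}$ is measurable by Assumption \ref{assumption : measurability initial information}. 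Hence
\[
\P_{(X_t^u,\alpha_t^u,\boldsymbol{\epsilon}^0_{\le t})}=(\Phi_t,\mathrm a_t,\pr)(u,\cdot)\sharp\big(\P_{\Gamma^u}\otimes\lambda_\epsilon^{\otimes t}\otimes\P_{\epsilon^0}^{\otimes t}\big).
\]
Measurability of this map in $u$ follows from a standard fact: if $\kappa:u\mapsto\kappa_u\in\Pc(H)$ is a measurable kernel and $\psi:I\times H\to K$ is Borel, then $u\mapsto\psi(u,\cdot)\sharp\kappa_u$ is measurable. To prove it, apply a monotone class argument to $u\mapsto\int g(\psi(u,h))\kappa_u(\d h)$ for bounded Borel $g$, starting from products $\mathbb 1_{B}(u)\mathbb 1_{C}(h)$. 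The base case $t=0$ is given by the admissible initial condition, with $\Phi_0=\xi_0$.

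For the induction step, assume $\Phi_t$ has been built. The key intermediate object is the conditional law of $(X^v_t,\alpha^v_t)$ given the common noise:
\[
\P^0_{(X_t^v,\alpha_t^v)}=\nu_t(v,\boldsymbol{\epsilon}^0_{\le t}),\qquad \nu_t(v,e^0):=(\Phi_t,\mathrm a_t)(v,\cdot,\cdot,e^0)\sharp\big(\P_{\Gamma^v}\otimes\lambda_\epsilon^{\otimes t}\big).
\]
This identity uses independence of $(\Gamma^v,\epsilon^v)$ from $\epsilon^0$ together with the freezing lemma. By the same kernel fact, now with parameter $(v,e^0)$, the map $\nu_t$ is jointly measurable. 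Consequently the measure $m_t(e^0):=\lambda(\d v)\,\nu_t(v,e^0)(\d x,\d a)\in\Pc(I\times\Xc\times A)$ depends measurably on $e^0$, which can be checked by testing against bounded Borel functions and using Fubini. We then set
\[
\Phi_{t+1}(u,\gamma,e_{\le t+1},e^0_{\le t+1}):=F\big(u,\Phi_t(u,\gamma,e_{\le t},e^0_{\le t}),\mathrm a_t(u,\gamma,e_{\le t},e^0_{\le t}),m_t(e^0_{\le t}),e_{t+1},e^0_{t+1}\big).
\]
This is Borel because $F$ is Borel (Assumption \ref{assumptions: F and f}), and it represents $X^u_{t+1}$ for a.e. $u$ by the dynamics \eqref{eq:dynamics_NEMFC}. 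This closes the induction.

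The main obstacle is the step defining the interaction term. We cannot use a jointly measurable $(u,\omega)\mapsto X^u_t(\omega)$, as noted via \cite{sun06}, so the interaction must be defined through laws only. One must also verify that the conditional law $\P^0$ is versioned consistently across $v$, up to a $\lambda$-null set, by the single map $\nu_t$. I would handle this by \emph{defining} the interaction term in \eqref{eq:dynamics_NEMFC} via the representation $m_t(\boldsymbol{\epsilon}^0_{\le t})$. I would then check that, for each fixed $v$ outside the null set where the representation fails, $\nu_t(v,\boldsymbol{\epsilon}^0_{\le t})$ is indeed a version of $\P^0_{(X^v_t,\alpha^v_t)}$. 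This check is a direct consequence of independence.
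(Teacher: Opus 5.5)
Your proposal is correct and follows the paper's proof. Both argue by induction carrying a Borel representation $X_t^u = x_t(u,\Gamma^u,(\epsilon^u_s)_{s\le t},(\epsilon^0_s)_{s\le t})$, obtain measurability of the law as a pushforward of $\P_{\Gamma^u}\otimes\lambda_\epsilon^{\otimes t}\otimes\P_{\epsilon^0_1}^{\otimes t}$, and define $x_{t+1}$ through $F$ with the interaction term given by a kernel in $(v,e^0)$ integrated against $\d v$. The only difference is that you build that kernel explicitly via the freezing lemma, whereas the paper invokes a (parametrized) disintegration theorem; your route is slightly more elementary, gives joint measurability in $(v,e^0)$ directly, and matches the explicit formula the paper itself uses later in the proof of Proposition~\ref{lemma: equivalence_between_weak_strong}.
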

\begin{proof}
    We will prove  this result by induction on $t \in \N$. Starting from $t=0$ and since $\P_{(\xi^u,\alpha_0^u)} = \big(\xi_0(u,\cdot), \mathrm{a}_0(u,\cdot)\big) \sharp \P_{\Gamma^u}$, the result follows from Assumption \ref{assumption : measurability initial information} and Definition \ref{assumption : Admissible conditions for the strong formulation}. Suppose that it holds true for a given $t \in \N^*$.

    \noindent {\it Step 1 : } We show that for any $t \in \N$, there exists a Borel measurable function $x_t : I  \times G \times E^{t} \times (E^0)^{t}  \to \Xc$ such that for a.e $u \in I$
    \begin{align}\label{eq : representation for X}
        X_t^u = x_t(u,\Gamma^u, (\epsilon^u_s)_{s \leq t}, (\epsilon^0_s)_{s \leq t}) \quad \P -\text{a.s}.
    \end{align}
    It clearly holds true at time $t=0$. Assume that it holds true at time $t \in \N^*$. By induction hypothesis, it is clear that the mapping $u \mapsto \P_{(X_t^u, \alpha_t^u, (\epsilon^0_s)_{s \leq t})}$ is measurable. Indeed, we notice that
    \begin{align}\label{eq : measurability_joint_version}
        \P_{(X_t^u,\alpha_t^u,(\epsilon_s^0)_{s \leq t})}&=\big(x_t(u,\cdot,\cdot,\cdot),\mathrm{a}_t(u,\cdot,\cdot,\cdot), \cdot \big) \sharp \P_{(\Gamma^u,(\epsilon_s^u)_{s \leq t}, (\epsilon_s^0)_{s \leq t})} \\ &=\big(x_t(u,\cdot,\cdot,\cdot),\mathrm{a}_t(u,\cdot,\cdot,\cdot), \cdot \big) \sharp (\P_{\Gamma^u} \otimes (\lambda_{\epsilon})^{\otimes (t-1)} \otimes \big(\P_{\epsilon^0_1}\big)^{\otimes (t-1)} \big),
    \end{align}
    where we used the induction hypothesis for the representation of $X_t^u$, the fact that $\balpha \in \Ac^S$ and the independence between $\Gamma^u, (\epsilon_s^u)_{s \leq t}$ and $(\epsilon_s^0)_{s \leq t}$.
    Moreover, from the definition of the conditional law, the measurability of the map $u \mapsto \P_{(X_t^u, \alpha_t^u, (\epsilon^0_s)_{s \leq t})}$, and due to the disintegration theorem on Polish spaces, there exists for every $t \in \N$ a measurable mapping $\phi_t : I \times  (E^0)^{t} \to \Pc(\Xc \times A)$ such that $\P^0_{(X_t^u,\alpha_t^u)}(\d x, \d a) = \phi_t(u,(\epsilon^0_{s})_{s \leq t})(\d x, \d a) \quad \d u \otimes \d \P \text{ a.e}$.
    Hence, one can define the measure-valued map $\tilde{\phi}$ as 
    \begin{align}\label{eq : kernel_measure}
       (E^0)^t \ni y \mapsto  \tilde{\phi}_t(y)=  \phi_t(u,y,\d x , \d a) \d u  \in \Pc(I \times \Xc \times A).
    \end{align}
     The measurability of $\tilde{\phi}_t$
follows from  the measurability of the compositions $y \to \big(\phi_t(u,y), \lambda \big) \to   \phi_t(u,y,\d x,  \d a)\d u  $. Therefore we have from the dynamics in \eqref{eq:dynamics_NEMFC}, for $\lambda -a.e$ $u \in I$
    \begin{align}
        X_{t+1}^u = F(u, x_t(u,\Gamma^u, (\epsilon^u_{s})_{s \leq t} , (\epsilon^0_s)_{s \leq t}), \mathrm{a}_t(u,\Gamma^u, (\epsilon^u_s)_{s \leq t} , (\epsilon^0)_{s \leq  t}),  \tilde{\phi}_t\big((\epsilon^0_s)_{s \leq t}),\epsilon_{t+1}^u, \epsilon_{t+1}^0), \quad \P-\text{a.s}.
    \end{align}
    We now define the measurable map $x_{t+1}$ on $I \times G \times E^{t+1} \times (E^0)^{t+1}$ as 
    \small
    \begin{align}\label{eq : measurable mapping xt+1}
        x_{t+1}(u, \gamma,(e_s)_{s \leq t+1}, (e^0_s)_{s \leq t+1})  = F(u,x_t(u,\gamma, (e_s)_{s \leq t}, (e^0_s)_{s \leq t}), \mathrm{a}_t(u,\gamma,(e_s)_{s \leq t}, (e^0_s)_{s \leq t}), \tilde{\phi}_t((\epsilon_s^0)_{s \leq t}), e_{t+1}, e^0_{t+1})
    \end{align}
    \normalsize
    where the measurability follows from the Borel measurability of $F$ by Assumption \ref{assumptions: F and f}, the induction hypothesis and the measurability of $\mathrm{a}_t$.

\noindent \textit{Step 2 :} From the representation of $X_{t+1}^u$ in \eqref{eq : representation for X} with the measurable mapping $x_{t+1}$ defined in \eqref{eq : measurable mapping xt+1}, it is again clear following the same computation as in \eqref{eq : measurability_joint_version} that the mapping $u \mapsto \P_{(X_{t+1}^u,\alpha_{t+1}^u, (\epsilon_s^0)_{s \leq t+1})}$ is measurable which is enough to the proof.
\end{proof}

\vspace{2mm}

From Lemma \ref{lemma : measurability of mapping}, we have the measurability of the mapping $u \mapsto \P_{(X_t^u,\alpha_t^u, (\epsilon^0_s)_{s \leq t})}$ for any $t \in \N$ which ensures the well posedness of the controlled system \eqref{eq:dynamics_NEMFC}. 
Given an admissible condition $\boldsymbol{\xi}$ and an admissible control $\balpha \in \Ac^S$, we can then define the expected gain in the strong formulation as
\begin{align}\label{eq : cost functional V form 1}
    V_S^{\balpha}(\boldsymbol{\xi}) &:= \;  \int_I \E \Big[  \sum_{t \in \N} \beta^t f(u, X_t^u, \alpha_t^u, \P^0_{(X_t^v,\alpha_t^v)}(\d x , \d a )\d v ) \Big] \d u.
\end{align}
Moreover, it is worth mentioning that \eqref{eq : cost functional V form 1} can be rewritten as 
  \begin{align}\label{eq : equivalent_formulation_expected_gain}
        V_S^{\balpha}(\boldsymbol{\xi}) &= \;  \E \Big[ \sum_{t \in \N}  \beta^t  \int_I \E^0  \big[ f \big(u,X_t^u,\alpha_t^u, \P^0_{(X_t^v,\alpha_t^v)}(\d x, \d a ) \d v \big)  \big] \d u  \Big],
    \end{align}
from Fubini's theorem and the law of iterated conditional expectations since the mapping
\begin{align}
    I \times \Omega \ni (u,\omega) \to \E^0 \big[ f(u,X_t^u,\alpha_t^u,\P^0_{(X_t^v,\alpha_t^v)}(\d x , \d a ) \d v) \big](\omega),
\end{align}
is jointly measurable. Indeed, the measurability follows from the measurability of the mapping $u \mapsto \P_{(X_t^u,\alpha_t^u,(\epsilon^0_s)_{s \leq t})}$ in Lemma \ref{lemma : measurability of mapping} and the Borel measurability of $f$ after noticing that $\P-\text{a.s}$ (where $\tilde{\phi}_t$ has been defined in \eqref{eq : kernel_measure})
    \begin{align}
        \E^0 \big[ f(u,X_t^u,\alpha_t^u,\P^0_{(X_t^v,\alpha_t^v)}(\d x , \d a ) \d v) \big](\omega) = \int_{\Xc \times A} f \big(u,x,a, \tilde{\phi}_t\big((\epsilon^0_{s})_{s \leq t}(\omega) \big) \P_{(X_t^u, \alpha_t^u) | (\epsilon^0_s)_{s \leq t} = (\epsilon^0_{s})_{s \leq t}(\omega)}(\d x, \d a).
    \end{align}
The value function of the conditional non exchangeable mean field control Markov decision processes CNEMF-MDP is then defined by
\begin{align}\label{eq : value function}
    V_S(\boldsymbol{\xi} ) &:= \;  \underset{\balpha \in \Ac^S}{\sup} V_S^{\balpha}(\boldsymbol{\xi}).
\end{align}

We end this subsection by showing the connection between the strong and weak formulations.

\begin{Proposition}\label{lemma: equivalence_between_weak_strong}

    Under Assumptions  \ref{assumptions: F and f} and \ref{assumption : measurability initial information}, let  $\boldsymbol{\xi} :=(\xi^u)_{u \in I}$ be a collection of $\Xc$-valued random variables admissible in the sense of  Definition \ref{assumption : Admissible conditions for the strong formulation} for a given Borel map $\xi_0$ and let $\xi = \xi_0(U,\Gamma)$.  Then given a collection of open-loop policies  $(\mra_t)_{t \in \N}$ and defining the controls $\balpha=(\alpha^u)_{u \in I} \in \Ac^S$ and $\alpha \in \Ac^W$, respectively as 
    \begin{align}\label{eq : representation alpha}
    \begin{cases}
        \alpha_t^u &= \mra_t(u, \Gamma^u, (\epsilon_s^u)_{s \leq t }, (\epsilon_s^0)_{s \leq t}), \\
        \alpha_t &= \mra_t(U,\Gamma, (\epsilon_s)_{s \leq t},(\epsilon_s^0)_{s \leq t}), 
    \end{cases}
    \end{align}
    we have $V_S^{\balpha}(\boldsymbol{\xi} ) = V_W^{\alpha}(\xi)$.
\end{Proposition}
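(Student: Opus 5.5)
The plan is to prove, by induction on $t \in \N$, that both formulations are driven by the same Borel maps and the same common-noise measure flow. Concretely, I will show that there exist Borel maps $x_t : I \times G \times E^t \times (E^0)^t \to \Xc$ and measurable maps $\tilde\phi_t : (E^0)^t \to \Pc(I\times\Xc\times A)$ with the following properties. First, for $\lambda$-a.e. $u$, $X_t^u = x_t(u,\Gamma^u,(\epsilon^u_s)_{s\le t},(\epsilon^0_s)_{s\le t})$ a.s. Second, $X_t = x_t(U,\Gamma,(\epsilon_s)_{s\le t},(\epsilon^0_s)_{s\le t})$ a.s. Third, a.s.
\begin{align}
\P^0_{(X^v_t,\alpha^v_t)}(\d x,\d a)\,\d v \;=\; \tilde\phi_t\big((\epsilon^0_s)_{s\le t}\big) \;=\; \P^0_{(U,X_t,\alpha_t)}.
\end{align}
Step 1 of the proof of Lemma \ref{lemma : measurability of mapping} already gives the strong-formulation representation and the first equality, via the recursion \eqref{eq : measurable mapping xt+1}. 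What remains is to check that the weak state obeys the same recursion and that the conditional law $\P^0_{(U,X_t,\alpha_t)}$ coincides with $\tilde\phi_t$.

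The key identity is a law comparison. Since $U,\Gamma,\epsilon,\epsilon^0$ are independent except for the $U$–$\Gamma$ dependence, and Assumption \ref{assumption : measurability initial information}(2) gives $\P_{\Gamma\mid U=u} = \P_{\Gamma^u}$ for a.e. $u$, the conditional law of $(\Gamma,(\epsilon_s)_{s\le t})$ given $U=u$ and $(\epsilon^0_s)_{s\le t}=y$ is $\P_{\Gamma^u}\otimes\lambda_\epsilon^{\otimes t}$. This is exactly the law of $(\Gamma^u,(\epsilon^u_s)_{s\le t})$, which is independent of $\epsilon^0$.

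Hence, for a bounded measurable test function $h$ on $I\times\Xc\times A$, I will compute
\begin{align}
\E\big[h(U,X_t,\alpha_t)\,\big|\,(\epsilon^0_s)_{s\le t}=y\big] = \int_I \int h\big(u,x_t(u,\gamma,e,y),\mathrm a_t(u,\gamma,e,y)\big)\,\P_{\Gamma^u}(\d\gamma)\,\lambda_\epsilon^{\otimes t}(\d e)\,\d u .
\end{align}
The right-hand side is $\int_I\int h(u,x,a)\,\phi_t(u,y)(\d x,\d a)\,\d u = \int h \,\d\tilde\phi_t(y)$. This gives the measure identity above, with the conditional law given $\epsilon^0$ reducing to conditioning on $(\epsilon^0_s)_{s\le t}$ by independence of future common noises. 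Plugging this identity into the dynamics \eqref{eq:dynamics_NEMFC_weak_formulation} then shows that $X_{t+1}=x_{t+1}(U,\Gamma,(\epsilon_s)_{s\le t+1},(\epsilon^0_s)_{s\le t+1})$ with the same map $x_{t+1}$ as in \eqref{eq : measurable mapping xt+1}, which closes the induction. The base case is $x_0=\xi_0$ together with $\xi=\xi_0(U,\Gamma)$.

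With the representation in hand, I will compare the gains term by term; bounded $f$ and $\beta<1$ justify exchanging sum and expectation. In the weak formulation, $\E[\beta^t f(U,X_t,\alpha_t,\tilde\phi_t((\epsilon^0_s)_{s\le t}))]$ equals, by the same disintegration,
\begin{align}
\int_I \int f\big(u,x_t(u,\gamma,e,y),\mathrm a_t(u,\gamma,e,y),\tilde\phi_t(y)\big)\,\P_{\Gamma^u}(\d\gamma)\lambda_\epsilon^{\otimes t}(\d e)\,\P_{(\epsilon^0_s)_{s\le t}}(\d y)\,\d u .
\end{align}
This is exactly $\int_I \E[f(u,X^u_t,\alpha^u_t,\P^0_{(X^v_t,\alpha^v_t)}\d v)]\,\d u$, using \eqref{eq : measurability_joint_version} and Fubini as justified around \eqref{eq : equivalent_formulation_expected_gain}. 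Summing over $t$ gives $V_S^{\balpha}(\boldsymbol\xi)=V_W^\alpha(\xi)$.

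I expect the main obstacle to be the measure-theoretic justification of the conditional-law identity. It requires a jointly measurable version of $(u,y)\mapsto\phi_t(u,y)$ that serves simultaneously as the kernel for the strong formulation and as the disintegration of $\P_{(U,X_t,\alpha_t)\mid(\epsilon^0_s)_{s\le t}}$ in the weak one. It also requires that the a.e. identity $\P_{\Gamma\mid U=u}=\P_{\Gamma^u}$ suffices, since null sets in $u$ do not affect integrals against $\d u$. I would handle this by defining $\phi_t(u,y)$ explicitly as the pushforward of $\P_{\Gamma^u}\otimes\lambda_\epsilon^{\otimes t}$ under $(x_t,\mathrm a_t)(u,\cdot,\cdot,y)$. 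This map is measurable by Assumption \ref{assumption : measurability initial information}(1), and verifying both identities directly against test functions $h$ then avoids any appeal to abstract disintegration uniqueness.
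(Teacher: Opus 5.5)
Your proposal is correct and follows essentially the same route as the paper. Both arguments use an induction showing that the weak and strong states are given by the same Borel maps $x_t$, so that $\P^0_{(U,X_t,\alpha_t)}$ and $\P^0_{(X^v_t,\alpha^v_t)}(\d x,\d a)\,\d v$ coincide. The key inputs are $\P_{\Gamma\mid U=u}=\P_{\Gamma^u}$ for $\lambda$-a.e. $u$ and the mutual independence of $(U,\Gamma)$, $\epsilon$, $\epsilon^0$, and a term-by-term comparison of the gains then closes the argument.

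Your explicit choice of $\phi_t(u,y)$ as the pushforward of $\P_{\Gamma^u}\otimes\lambda_\epsilon^{\otimes t}$, checked against test functions, is a more careful version of the paper's disintegration step $\P^0_{(X_t,\alpha_t)\mid U=u}=\P^0_{(X^u_t,\alpha^u_t)}$.
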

\begin{proof}
    Taking the conditional expectation with respect to $(\epsilon^0_t)_{t \in \N}$ in \eqref{eq : cost functional V weak formulation}, and  recalling that the $\Pc_{\lambda}(I \times \Xc \times A)$ valued-processes $\big(\P^0_{(U,X_t,\alpha_t)}\big)_{t \in \N}$  and $\P^0_{(X_t^v,\alpha_t^v)}(\d x, \d a) \d v $ are $\F^0$-adapted (see for instance Proposition A.1 in \cite{motte2022mean}), we see that $V^{\balpha}_S(\boldsymbol{\xi})$ and $V_W^{\alpha}(\xi)$ defined respectively in  \eqref{eq : equivalent_formulation_expected_gain} and in \eqref{eq : cost functional V weak formulation} can be rewritten respectively as
    \begin{align}
    \begin{cases}
         V_S^{\balpha}(\boldsymbol{\xi}) &= \E \Big[ \sum_{t \in \N} \beta^t  \int_{I \times \Xc \times A} f(u,x,a, \P^0_{(X_t^v,\alpha_t^v)}(\d x,\d a)\d v) \P^0_{(X_t^u,\alpha_t^u)}(\d x , \d a) \d u  \Big], \\
         V_W^{\alpha}(\xi) &= \E \Big[ \sum_{t \in \N} \beta^t \int_{I \times \Xc \times A} f(u,x,a,\P^0_{(U,X_t,\alpha_t)})\P^0_{(U,X_t,\alpha_t)}(\d u,\d x, \d a) \Big].
    \end{cases}
    \end{align}
    The rest of the proof consists then in showing that $\P^0_{(U,X_t,\alpha_t)} = \P^0_{(X_t^v,\alpha_t^v)} \d v, \quad t \in \N, \quad  \P-\text{a.s}$.  Since $U$ is independent with respect to $(\epsilon_t^0)_{t \in \N^*}$, by standard disintegration theorem, it is enough to verify that for $\lambda-\text{a.e}$ $u \in I$, we have $\P^0_{(X_t,\alpha_t) | U=u} = \P^0_{(X_t^u,\alpha_t^u)}$ for any $t \in \N$. This is clear at $t=0$ since we have for $\lambda-\text{a.e}$ $u \in I$
    \begin{align}
        \P_{(\xi^u,\alpha_0^u)} = \big( \xi_0(u,\cdot), \mathrm{a}_0(u,\cdot)\big) \sharp \P_{\Gamma^u} = \big( \xi_0(u,\cdot), \mathrm{a}_0(u,\cdot)\big) \sharp \P_{\Gamma |U=u} = \P_{(\xi_0,\mathrm{a_0})|U=u},
    \end{align}
    where the second equality follows from Assumption \ref{assumption : measurability initial information}. 
    We now prove the result by induction on $t \in \N^{\star}$. Suppose it holds true for any $t \in \N^{\star}$. Then, from the definition of $\tilde{\phi}_t$ in \eqref{eq : kernel_measure} and the induction hypothesis, it is easy to verify that $X_{t+1} = x_{t+1}(U,\Gamma,(\epsilon_s)_{s \leq t+1}, (\epsilon_s^0)_{s \leq t+1})$ $\P-\text{a.s}$ for the same Borel map $x_{t+1}$ as defined in  \eqref{eq : representation for X}. Therefore, using \eqref{eq : representation alpha},we have for $\lambda-\text{a.e } u \in I$
    \begin{align}
    \begin{cases}
     \P^0_{(X_{t+1}^u,\alpha_{t+1}^u)} &= \big(x_{t+1}(u,\cdot,\cdot, (\epsilon_s^0)_{s \leq t+1}),\mra_{t+1}(u,\cdot,\cdot,(\epsilon_s^0)_{s \leq t+1}) \sharp \big( \P_{\Gamma^u} \otimes {(\lambda_{\epsilon})^{\otimes t})}  \big), \\
     \P^0_{(X_{t+1},\alpha_{t+1})| U=u} & =\big(x_{t+1}(u,\cdot, \cdot, (\epsilon_s^0)_{s \leq t+1}), \mra_{t+1}(u,\cdot,\cdot,(\epsilon_s^0)_{s \leq t+1})\sharp \big( \P_{(\Gamma|U=u} \otimes  (\lambda_{\epsilon})^{\otimes t})),
    \end{cases}
    \end{align}
    where we used in the first equality independence between $\Gamma^u$ and $(\epsilon_s^u)_{s \in \N^{\star}}$ and  in the second equality  independence between $(U,\Gamma)$ and $(\epsilon_s)_{s \in \N^{\star}}$. From Assumption \eqref{assumption : measurability initial information}, we conclude the proof.
\end{proof}

\subsection{The $N$-agent formulation}

Let  $N \in \N^*$ representing the number of interacting particles/agents. We consider the following dynamics for  the  $N$-agent non exchangeable state process $\bX^N := (X^{i,N})_{i \in \llbracket 1, N \rrbracket}$: 
\begin{align}\label{eq : dynamic N agents}
\left\{\begin{aligned}  
       X_0^{i,N} &= x_0^i,   \\
       X_{t+1}^{i,N} &=  F_N( \frac{i}{N},X_t^{i,N},\alpha_t^{i,N}, \frac{1}{N} \sum_{j=1}^{N} \delta_{(\frac{j}{N},X_t^{j,N},\alpha_t^{j,N})},\epsilon_{t+1}^{i/N}, \epsilon_{t+1}^0), \quad t \in \N.
    \end{aligned}
        \right.
\end{align}
where $\bx_0 := (x_0^i)_{i \in \llbracket 1 , N \rrbracket} \in \Xc^N$ and the control followed by agent $i$  is adapted to the filtration $\F^N :=(\Fc_t^N)_{t \in \N}$ generated by $\boldsymbol{\epsilon}^N =\big((\epsilon_t^{i/N})_{i \in \llbracket 1, N \rrbracket}, \epsilon_t^0 \big)_{t \in \N^*}$ completed by a family of mutually i.i.d. uniform random variables $\boldsymbol{Z}^N = (Z_t^{i/N})_{i \in \llbracket 1, N \rrbracket, t \in \N}$ used for the randomization of the controls, i.e., $\Fc_t^N = \sigma \big( (\boldsymbol{\epsilon}_s^N)_{s \leq t} \big)  \vee \sigma(\boldsymbol{Z}^N)$.

We now follow mainly the results from \cite{motte2023quantitative}, and  state  the Bellman equation for the $N$-agent problem, viewed  as a MDP with state space $\Xc^N$, action space $A^N$, noise sequence $\boldsymbol{\epsilon}^N = (\boldsymbol{\epsilon}_t^N)_{t \in \N^*}$ with $\boldsymbol{\epsilon}_t^N := \big((\epsilon_t^{i/N})_{i \in \llbracket 1, N \rrbracket}, \epsilon_t^0 \big)$ valued in $E^N \times E^0$. We consider the set $\A^{OL}_{N}$ of randomized open-loop policies defined  as  sequences $\boldsymbol{\nu}^N = (\boldsymbol{\nu}^N_t)_{t \in \N}$ with $\bnu_0^N$ measurable function from $([0,1]^N)^{\N}$ into $A^N$ and $\bnu_t^N$ measurable function from $([0,1]^N)^{\N} \times (E^N \times E^0)^t$ into $A^N$ for $t \in \N^*$. Now, for each $\bnu^N \in \A^{OL}_{N}$, we associated a control $\alpha^{\bnu^N} \in \boldsymbol{\Ac}_N$ (recall its definition in \eqref{eq : def_admissible_policies_N_agents}) given  by 
\begin{align}
    \alpha_t^{\boldsymbol{\bnu}^N} := \boldsymbol{\nu}_t^N(\boldsymbol{Z}^N, (\boldsymbol{\epsilon}_s^N)_{s \in \llbracket 1, t \rrbracket}), \quad t \in \N.
\end{align}
The objective of the social planner is to maximize over the set $\boldsymbol{\Ac}_N$ of $A^N$-valued process $\balpha = (\alpha_t^{i,N})_{i \in \llbracket 1, N \rrbracket,t \in \N}$ the following criterion:
\begin{align}\label{eq : Expected Gain N-agent}
    V_N^{\balpha}(\bx_0) := \frac{1}{N} \sum_{i=1}^{N} \E \Big[ \sum_{t \in \N} \beta^t  f_N\big(\frac{i}{N} , X_t^{i,N}, \alpha_t^{i,N} , \frac{1}{N} \sum_{j=1}^{N} \delta_{(\frac{j}{N}, X_t^{j,N}, \alpha_t^{j,N})}\big) \Big],
\end{align}
and the value function of the $N-$agent non exchangeable MDP control problem as 
\begin{align}\label{eq : value function N agents}
    V_N(\bx_0) :=  \underset{\balpha \in \bAc_{N}}{\sup} V_N^{\balpha}(\bx_0).
\end{align}
We remark that the control problem \eqref{eq : dynamic N agents}-\eqref{eq : value function N agents} is a standard MDP with state space $\Xc^N$ and action space $A^N$ with randomized open-loop controls. As it is standard, well-posedness of the $N$-agent formulation is discussed briefly in Appendix \ref{appendix : bellman_equation_N_agents}. We refer to Appendix B in \cite{motte2023quantitative} for a deeper formulation of the setting.

\subsection{Additional assumptions for quantitative propagation of chaos}

We make the following assumptions on the state dynamics function $F_N$ and reward function $f_N$ of the $N$-agent problem. 
\begin{Assumption}\label{assumption: regularity_prop_chaos_Fn_fn_F_f}
   The functions 
   \begin{align}
        F_N : I \times \Xc \times A \times \Pc(I \times \Xc \times A) \times E \times E^0 \to \Xc, \qquad \quad
        f_N : I \times \Xc \times A \times \Pc(I \times \Xc \times A) \to \R,
   \end{align}
   are Borel measurable. Moreover, we have
\begin{itemize}
\item[(1)] 
\begin{align}
    \E \big[d \big(F_N(u,x,a,\mu,\epsilon^1_1, e^0),F_N(u,x',a',\mu' , \epsilon_1^1,e^0) \big) \big] \leq L_F \big ( \bd((x,a),(x,a')) + \Wc(\mu,\mu') \big).
\end{align}
 \item[(2)] 
\begin{align}
\big | f_N(u,x,a,\mu)  - f_N(u,x',a',\mu')  \big | \leq  L_f \big( \bd((x,a),(x',a')) + \Wc(\mu,\mu') \big) 
\end{align}
\end{itemize} 
for every  $u \in I$, $x,x'\in\Xc$, $a,a' \in A$, $\mu,\mu' \in \Pc(I \times \Xc \times A)$ and $e^0 \in E^0$ and where the constants $L_F$ and $L_f$ have been introduced in Assumption \ref{assumptions: F and f}. 
\end{Assumption}
\begin{Remark}
\normalfont
   The proposed setting slightly differs from those considered in \cite{motte2022mean, motte2023quantitative}, as we allow the state and reward functions of the $N$-agent system to depend explicitly on $N$. This choice is motivated by the recent literature on non-exchangeable mean field systems, where the interaction structure is typically described by a sequence of step-graphons $(G_N)_{N \in \N^{\star}}$ converging, in a suitable sense, to a limiting graphon $G$. For instance, one could look at the following sequence of $(f_N)_{N \in \N^{\star}}$ and $f$ in the form
   \begin{align}
        f_N(u,x,a,\mu) &= \int_{I \times \Xc \times A} G_N(u,x,a,v,x',a') \mu(\d x', \d a', \d v), \notag \\
        f(u,x,a,\mu) &= \int_{I \times \Xc \times A} G(u,x,a,v,x',a') \mu(\d x', \d a', \d v ).
\end{align}
\end{Remark}

The  dependence upon  $N$ in the state and reward functions requires us to have some explicit convergence assumption of $f_N$ towards $f$ and $F_N$ towards $F$ which are provided by the assumptions below.

\begin{Assumption}\textnormal{(Convergence of $f_N$ and $F_N$ towards $f$ and $F$).}\label{assumption : fN to f and FN to F}
There exists two decreasing positive sequences $(\epsilon_N^F)_{N \in \N^*}$ and $(\epsilon_N^f)_{N \in \N^*}$  satisfying $\underset{N \to \infty}{\text{ lim }} \epsilon_N^f = \underset{N \to \infty}{\text{ lim }} \epsilon_N^F = 0$ such that
    \begin{itemize}
        \item [(i)]
         \begin{align}
         \underset{(\bx,\ba) \in \Xc \times A}{\sup }\frac{1}{N} \sum_{j=1}^{N} | f( \frac{j}{N} , x^j,a^j,\mu_N \big[\bu,\bx,\ba \big]) - f_N(\frac{j}{N}, x^j,a^j,\mu_N \big[ \bu , \bx , \ba \big]) | \leq \epsilon_N^f
    \end{align}
       \item [(ii)]
        \begin{align}
         \underset{(\bx,\ba) \in \Xc \times A}{\sup }\E \Big[ \frac{1}{N} \sum_{i=1}^{N} d   \big(F(\frac{i}{N},  x^i,a^i , {\mu}_N^{\lambda} [\bu,\bx,\ba] , \epsilon_1^i, \epsilon_1^0),F_N(\frac{i}{N},  x^i,a^i , \mu_N^{\lambda} \big[\bu,\bx,\ba] , \epsilon_1^i, \epsilon_1^0) \big)  \Big]  \leq \epsilon_N^F .
    \end{align}
    \end{itemize}
    where we recall that $\mu_N[\bu,\bx,\ba]$, $\mu_N^{\lambda}[\bu,\bx,\ba]$ and $\bu$ have been defined in \eqref{eq : Empirical Measures def}.
\end{Assumption}
\begin{Remark}
\normalfont
    Notice that Assumptions \ref{assumption : fN to f and FN to F} is satisfied whenever
    \begin{align}
    \underset{ 1 \leq j \leq N}{\max} \underset{(x,a,\mu) \in \Xc \times A \times \Pc(I \times \Xc \times A)}{\sup}  | f( \frac{j}{N} , x,a,\mu) - f_N(\frac{j}{N}, x,a,\mu) \big | \leq \epsilon^f_N, \notag
\end{align}
and
\begin{align}
\underset{ 1 \leq j \leq N}{\max} \underset{(x,a,\mu) \in \Xc \times A \times \Pc(I \times \Xc \times A)}{\sup}
         \E \Big[ d   \big(F(\frac{j}{N},  x,a , \mu  , \epsilon_1^i, \epsilon_1^0),F_N(\frac{j}{N},  x,a , \mu , \epsilon_1^i, \epsilon_1^0) \big)  \Big]  \leq \epsilon^F_N \notag. 
\end{align}
\end{Remark}

\vspace{2mm}

\section{Main results}

\subsection{Bellman fixed point on $\Pc_{\lambda}(I \times \Xc)$}

Our first result is to show the equivalence between weak and strong formulations of the CNEMF-MDP, i.e.,  the equality between the value functions $V_{\text{S}}$ and $V_{\text{W}}$ relying on a lifting procedure on the space $\Pc_{\lambda}(I \times \Xc)$ inspired by similar arguments as used in \cite{motte2022mean}. 

\noindent  
\begin{Theorem}\textnormal{(Equivalence of value functions between weak and strong formulation).}\label{thm: equivalence_value_functions_weak_strong}

\noindent Let $\boldsymbol{\xi}=(\xi^u)_{u \in I}$ and $\xi$ be random variables satisfying the hypothesis of Proposition \ref{lemma: equivalence_between_weak_strong} implying  that  $\P_{\xi^u} = \P_{\xi |U=u}$ $\lambda-\text{a.e}$. Then, under the \textbf{randomization hypothesis} stated under the weak and strong formulation, we have
\begin{align}\label{eq : equality_strong_weak_controls}
    V_S(\boldsymbol{\xi}) = V_W(\xi) = \hat{V}(\mu),
\end{align}
where $\mu(\d u ,\d x) = \lambda(\d u )\P_{\xi^u}(\d x)  = \lambda(\d u )\P_{\xi | U=u}(\d x)$ and where $\hat{V}$ is  defined in \eqref{eq : value function canonical formulation}.
Moreover, there exists a map $\mathfrak{a}_0 \in L^0(\Pc_{\lambda}(I \times \Xc) \times I \times \Xc \times [0,1],A)$ called optimal randomized feedback policy such that the randomized feedback control defined  for $\lambda-\text{a.e } u \in I$ by  
\begin{align}
    \hat{\alpha}_t^{S,u} = \mathfrak{a}_0(\P^0_{X_t^v}(\d x) \d v , u,X_t,Z_t^u) , \quad \hat{\alpha}_t^{W} = \mathfrak{a}_0(\P^0_{(U,X_t)} ,U,X_t,Z_t), \quad \P-\text{a.s}, \quad t \in \N,
\end{align}
are  respectively optimal for $V_S(\boldsymbol{\xi})$ and $V_W(\xi)$.
\end{Theorem}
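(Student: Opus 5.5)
The plan is to follow the lifting strategy of \cite{motte2022mean}, now on $\Pc_{\lambda}(I \times \Xc)$. Since $\hat V$ is only referenced, I first fix the lifted MDP that defines it. Its state space is $\Pc_{\lambda}(I \times \Xc)$. Its action space is the set of probability kernels $\hat a$ from $I \times \Xc$ to $A$, identified with measures $\mu\hat a \in \Pc_{\lambda}(I\times\Xc\times A)$ having $\pr_{12}$-marginal $\mu$. The noise is $\epsilon^0$, and the transition is
\begin{align}
\hat F(\mu,\hat a,e^0) := \big(\pr_1, F(\pr_1,\pr_2,\pr_3,\mu\hat a,\cdot,e^0)\big)\sharp\big(\mu\hat a\otimes\lambda_\epsilon\big).
\end{align}
The reward is $\hat f(\mu,\hat a) := \int f(u,x,a,\mu\hat a)\,(\mu\hat a)(\d u,\d x,\d a)$. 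Then $\hat V(\mu)$ is the supremum over open-loop policies in $\epsilon^0$ of $\E[\sum_t\beta^t\hat f(\mu_t,\hat a_t)]$. By Proposition \ref{lemma: equivalence_between_weak_strong} we already have $V_S^{\balpha}(\boldsymbol\xi)=V_W^{\alpha}(\xi)$ for policies shared between the two formulations. Since both $\Ac^S$ and $\Ac^W$ are parametrized by the same $\A^{OL}$, this gives $V_S(\boldsymbol\xi)=V_W(\xi)$ directly. It therefore suffices to prove $V_W(\xi)=\hat V(\mu)$ and to construct optimal feedback controls in the weak formulation; these transfer to the strong formulation by the same identity of conditional laws $\P^0_{(X_t,\alpha_t)|U=u}=\P^0_{(X^u_t,\alpha^u_t)}$ used in that proof.

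\emph{Step 1 ($V_W\le\hat V$).} Take $\alpha\in\Ac^W$ and set $\mu_t:=\P^0_{(U,X_t)}$. Since $U$ is independent of $\epsilon^0$, $\mu_t\in\Pc_\lambda(I\times\Xc)$ and $\mu_0=\mu$. Let $\hat a_t$ be a measurable disintegration kernel of $\P^0_{(U,X_t,\alpha_t)}$ with respect to $\mu_t$; it is $\sigma(\epsilon^0_{s\le t})$-measurable, as in Proposition A.1 of \cite{motte2022mean}. Conditioning on $\epsilon^0$ and using that $\epsilon_{t+1}$ is independent of $(U,X_t,\alpha_t,\epsilon^0)$ gives $\mu_{t+1}=\hat F(\mu_t,\hat a_t,\epsilon^0_{t+1})$ and $\E[f(\cdot)\mid\epsilon^0]=\hat f(\mu_t,\hat a_t)$. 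Hence $V^\alpha_W(\xi)$ is the gain of an admissible lifted control, and $V_W\le\hat V$.

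\emph{Step 2 (Bellman equation and feedback selector).} Show that the operator
\begin{align}
\Tc\phi(\mu)=\sup_{\hat a}\big\{\hat f(\mu,\hat a)+\beta\E[\phi(\hat F(\mu,\hat a,\epsilon^0_1))]\big\}
\end{align}
is a $\beta$-contraction on $L^\infty(\Pc_\lambda(I\times\Xc))$ whose fixed point is $\hat V$; this is standard MDP theory once measurability is handled. Then, for any $\eta>0$, obtain an $\eta$-optimal measurable selector $\mu\mapsto\hat a(\mu)$. Compactness and the Lipschitz assumptions (Assumption \ref{assumptions: F and f}) give continuity of $\hat f$ and of $\E[\phi\circ\hat F]$ in $\mu\hat a$. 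The action set $\{\nu\in\Pc(I\times\Xc\times A):\pr_{12}\sharp\nu=\mu\}$ is compact and depends continuously on $\mu$, so a measurable selection theorem applies; if $\phi$ is u.s.c., an exact maximizer exists. The kernel is then realized as $\mathfrak a_0(\mu,u,x,z)$ with $\hat a(\mu)(u,x,\cdot)=\mathfrak a_0(\mu,u,x,\cdot)\sharp\Uc([0,1])$, jointly measurable, via the measurable quantile/coupling construction of Appendix A in \cite{motte2023quantitative} adapted to $I\times\Xc$.

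\emph{Step 3 ($\hat V\le V_W$ and optimality).} Set $\alpha_t=\mathfrak a_0(\P^0_{(U,X_t)},U,X_t,Z_t)$, using the uniforms $Z_t$ extracted from $\Gamma$ (randomization hypothesis, Remark \ref{rmk : randomization_uniform_decimals}). This is admissible: inductively $X_t$ is a Borel function of $(U,\Gamma,\epsilon_{\le t},\epsilon^0_{\le t})$, as in Lemma \ref{lemma : measurability of mapping}. Since $Z_t$ is independent of $(U,X_t,\epsilon^0)$, $\P^0_{(U,X_t,\alpha_t)}=\mu_t\hat a(\mu_t)$. By Step 1, $V^\alpha_W(\xi)$ therefore equals the lifted gain of the feedback policy $\hat a$, which is at least $\hat V(\mu)-\eta/(1-\beta)$, or equal to $\hat V(\mu)$ when an exact maximizer exists, by the verification argument on the fixed point. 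The strong-formulation control $\hat\alpha^{S,u}_t$ uses $Z^u_t$ and is handled identically, after first checking that the feedback can be rewritten in open-loop form so that Proposition \ref{lemma: equivalence_between_weak_strong} applies.

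\emph{Main obstacle.} The hardest step is Step 2: obtaining an exact optimizer, hence upper semicontinuity (or continuity) of $\hat V$ on $\Pc_\lambda(I\times\Xc)$, together with a jointly measurable randomization $\mathfrak a_0$. Assumption \ref{assumptions: F and f} requires no regularity in $a$, so continuity of $\nu\mapsto\hat f,\hat F$ in the weak topology may fail. My first attempt would be to show that $\Tc$ preserves Lipschitz functions in $\Wc$ when $\beta L_F<1$. If that is not available, I would work with $\eta$-optimal selectors and take a limit in $\eta$ using the contraction property, which gives the value equalities but only $\epsilon$-optimal controls.
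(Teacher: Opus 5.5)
Your reduction $V_S(\boldsymbol{\xi})=V_W(\xi)$ via Proposition \ref{lemma: equivalence_between_weak_strong} is sound, and so is Step 1; your kernel-valued action set is an acceptable substitute for the paper's projection $\bp$. The verification logic of Step 3 is also sound. The gap is in Step 2, which carries the whole argument. Assumption \ref{assumptions: F and f} only compares $f$ and $F$ at the \emph{same} action $a$ and gives no continuity in $a$. So the continuity of $\hat f$ and $\E[\phi\circ\hat F]$ in $\mu\hat a$ that you assert is false in general, and with it the Berge/measurable-selection argument on $\{\nu:\pr_{12}\sharp\nu=\mu\}$ collapses. You are then left with neither measurability of $\Tc\phi$ (so the contraction is not even well defined on $L^\infty_m$) nor Borel $\eta$-optimal feedback selectors. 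The phrases ``standard MDP theory once measurability is handled'' and ``work with $\eta$-optimal selectors'' name the difficulty without resolving it. Your Lipschitz-preservation idea needs a smallness condition on $\beta L_F$ that the theorem does not assume, and it still lacks the mechanism that produces regularity in $\mu$.

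The missing idea is to obtain regularity in the state $\mu$ alone by coupling with the \emph{same} randomized action. In the representation \eqref{eq : bellman operator form 4}, the action is an arbitrary $\alpha_0\in L^0(\Omega;A)$. Given $\mu,\mu'$ and an action $\alpha_0$ at $\mu$, build $(U',\xi')\sim\mu'$ from $(U,\xi)\sim\mu$ through the measurable coupling $\zeta$ and an independent uniform, so that the pair is optimally coupled, and keep the same $\alpha_0$. Assumption \ref{assumptions: F and f} then applies verbatim and shows that $\Tc$ maps the class $\Mc$ into itself. Its modulus is $2L_f\sum_t\beta^t\min[(2L_F)^t\Wc(\mu,\mu'),\Delta_{I\times\Xc}]$, and the truncation at the diameter makes this work for every $\beta<1$ and yields $\gamma$-H\"older regularity. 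This gives a continuous fixed point $V^\star$, with $\hat V\le V^\star$ by Lemma \ref{lemma : inequality_value_func_fixed_point}.

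The $\epsilon$-optimal lifted randomized feedback policies are then built without any selection theorem; this is Theorem 4.1 of \cite{motte2022mean}, which the paper invokes. Partition the compact space $\Pc_\lambda(I\times\Xc)$ into finitely many Borel cells of small diameter. Pick an $\epsilon$-optimal $\mathrm a_k\in L^0(I\times\Xc\times[0,1];A)$ at a representative $\mu_k$ using Proposition \ref{prop : Bellman equation}. On the $k$-th cell, transport $(U,\xi)\sim\mu$ to $\mu_k$ with $\zeta$ and then apply $\mathrm a_k$. The same-action coupling and the uniform continuity of $V^\star$ make this $O(\epsilon)$-optimal on the whole cell, and measurability is automatic. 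Proposition \ref{proposition : verification theorem} then yields $V_S=V_W=\hat V=V^\star$.

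By contrast, your doubt about an exactly optimal $\mathfrak a_0$ is not a defect of your plan. The paper does not prove that part in the text; it only refers to Appendix A of \cite{motte2023quantitative}. Under Assumption \ref{assumptions: F and f} alone the supremum need not be attained. Take $A=[0,1]$, $F$ constant and $f(u,x,a,\mu)=a\,\mathds{1}_{a<1}$: the value is $1/(1-\beta)$, but every control earns strictly less. So that claim requires additional regularity in $a$.
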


\begin{Remark}\label{rmk : bellman_fixed_point}
\normalfont
   Theorem \ref{thm: equivalence_value_functions_weak_strong} has several important implications for the underlying control problem, which can be summarized as follows.
    \begin{enumerate}
       \item[(1)] The CNEMF-MDP can be formulated as a standard MDP control problem with state space $\Pc_{\lambda}(I \times \Xc)$. Moreover, by \eqref{eq : equality_strong_weak_controls}, the control problem is law-invariant. In particular, let $\boldsymbol{\xi}=(\xi^u)_{u}, \boldsymbol{\xi'}=(\xi'^u)_u$, $\xi$ and $\xi'$ such that $\P_{\xi|U=u} = \P_{\xi^u} = \P_{\xi'^u}= \P_{\xi'|U=u}$  $\lambda$-a.e, then, one has $V_W(\xi) =V_S(\boldsymbol{\xi}) =V_S(\boldsymbol{\xi'}) = V_W(\xi') = \hat{V}(\mu)$ where $\mu(\d u ,\d x) = \lambda(\d u ) \P_{\xi^u}(\d x)$. Consequently, the value functions $V_S$ and $V_W$ can be identified with a single value function $\hat V$ defined on $\Pc_{\lambda}(I \times \Xc)$.

\item[(2)] The existence of an optimal randomized feedback control for the weak control problem is equivalent to the existence of an optimal randomized feedback control for the strong control problem. In both cases, the optimal control is given by the same feedback map $\mathfrak{a}_0$.

    \end{enumerate}
\end{Remark}

\subsection{Propagation of chaos for value functions}

Our second result is to quantify the rate of convergence of the value function of the $N$-agent MDP $V_N$ towards the value function of the CNEMF-MDP. Following Theorem \ref{thm: equivalence_value_functions_weak_strong} and Remark \ref{rmk : bellman_fixed_point}, we refer to $\hat V$ independently  for $V_{S}$ or $V_{W}$.
\begin{Theorem}\textnormal{(Quantitative propagation of chaos for the CNEMF-MDP).}\label{Thm : convergence value functions}
Under Assumptions \ref{assumptions: F and f}, \ref{assumption : measurability initial information}, \ref{assumption: regularity_prop_chaos_Fn_fn_F_f} and \ref{assumption : fN to f and FN to F},
    there exists some positive constant  $C$ such that for all  $\bx := (x^i)_{ i \in \llbracket 1, N \rrbracket} \in \Xc^N$, we have
    \begin{align}\label{eq: propagation_of_chaos}
     \Big | V_N(\bx ) -  \hat{V}( \mu_N^{\lambda}[\bu , \bx ]) \Big| \leq C \Big( M_N^{\gamma}  + \epsilon_N^f + (\epsilon_N^F)^{\gamma} 
        \Big),
\end{align}
where $\gamma = \text{min} \big(1,\frac{| \text{ln}(\beta)|}{ \text{ln}(2 L_f)} \big)$. 
\end{Theorem}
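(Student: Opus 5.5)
The plan follows the Bellman-operator comparison of \cite{motte2023quantitative}. Write $\Tc$ for the Bellman operator of the CNEMF-MDP on $L^\infty_m(\Pc_{\lambda}(I\times\Xc))$, with $\hat V$ its unique fixed point (Theorem \ref{thm: equivalence_value_functions_weak_strong}). Write $\Tc_N$ for the Bellman operator of the $N$-agent MDP on $\Xc^N$, with $V_N$ its fixed point (Appendix \ref{appendix : bellman_equation_N_agents}). Both are $\beta$-contractions in sup norm. Define the lift $\tilde V_N(\bx):=\hat V(\mu_N^{\lambda}[\bu,\bx])$. Then
\begin{align}
\|V_N-\tilde V_N\|_\infty \le \|\Tc_N V_N-\Tc_N\tilde V_N\|_\infty+\|\Tc_N\tilde V_N-\tilde V_N\|_\infty \le \beta\|V_N-\tilde V_N\|_\infty+\sup_{\bx}\big|\Tc_N\tilde V_N(\bx)-(\Tc\hat V)(\mu_N^{\lambda}[\bu,\bx])\big|,
\end{align}
so it suffices to bound the one-step consistency error on the right by $C(M_N^\gamma+\epsilon_N^f+(\epsilon_N^F)^\gamma)$.

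\textbf{Step 1 (regularity).} First I would show that $\hat V$ is $\gamma$-Hölder on $\Pc_\lambda(I\times\Xc)$ for $\Wc$, with $\gamma=\min(1,|\ln\beta|/\ln(2L_f))$. Start from $\mu,\mu'$ and an optimal coupling. Using the measurable coupling lemma, I would transport a given randomized feedback control from $\mu$ to $\mu'$. Assumption \ref{assumptions: F and f} then gives a $t$-step Wasserstein growth of order $(2L)^t\Wc(\mu,\mu')$. Summing $\beta^t\min(\Delta,(2L)^t\Wc)$ and splitting at the time $t^*$ where $(2L)^{t^*}\Wc\approx 1$ yields the Hölder exponent $\gamma$.

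\textbf{Step 2 (consistency: both inequalities).} For $\bx\in\Xc^N$ and an action profile $\ba$ (possibly randomized), compare the one-step gains. For the reward, Assumption \ref{assumption : fN to f and FN to F}(i) gives $\frac1N\sum_j f_N(\frac jN,x^j,a^j,\mu_N[\bu,\bx,\ba])\approx\int f(u,x,a,\mu_N^{\lambda}[\bu,\bx,\ba])\,\mu_N^{\lambda}[\bu,\bx,\ba](\d u,\d x,\d a)$. The error is $\epsilon_N^f$ plus a term $L_f/N$ from replacing $i/N$ by $u\in I_i^N$ and the empirical measure by the lifted measure (note $\Wc(\mu_N,\mu_N^\lambda)\le 1/N$). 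For the continuation term, I compare $\E[\hat V(\mu_N^\lambda[\bu,\bX_1])]$, where $X_1^i=F_N(\dots,\epsilon_1^i,\epsilon_1^0)$, with $\E[\hat V(\P^0_{(U,X_1)})]$ for the mean-field transition driven by the lifted state-action measure. Step 1 reduces this to $\E[\Wc(\cdot,\cdot)]^\gamma$. Three pieces contribute:
\begin{itemize}
\item[(a)] replacing $F_N$ by $F$ costs $\epsilon_N^F$, by Assumption \ref{assumption : fN to f and FN to F}(ii);
\item[(b)] freezing the label at $i/N$ versus $u\in I_i^N$ costs $O(1/N)$;
\item[(c)] conditionally on $\epsilon^0_1$, the points $(\frac iN,F(\dots,\epsilon_1^i,\epsilon_1^0))$ are independent but not identically distributed, and comparing their lifted empirical measure to the conditional law costs $O(M_N)$.
\end{itemize}
Jensen then gives the exponent $\gamma$. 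The reverse inequality needs a mean-field randomized feedback $\mathfrak a$ to be implemented by $N$ agents, via $a^i=\mathfrak a(\mu_N^\lambda,\frac iN,x^i,Z^i)$. Conversely, an $N$-agent action profile is itself a feedback on the step measure $\mu_N^\lambda[\bu,\bx]$, since that measure is constant in $u$ on each $I_i^N$. Hence both directions of the sup compare.

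\textbf{Main obstacle.} Piece (c) is where I expect the difficulty: $M_N$ is defined for i.i.d. samples, whereas here the samples are only independent. I would first try to handle this with uniform labels $U_i\sim\lambda$ on $I_i^N$. Conditionally on $\epsilon^0$, the variables $(U_i,X_1^i,\cdot)$ are independent, with a mixture law equal to the target measure. I would then couple with an i.i.d. sample from that mixture by a random permutation argument, the stratified sampling error being no worse than i.i.d. The cost is $M_N$ plus $O(1/N)=o(M_N)$. Collecting terms gives $(1-\beta)\|V_N-\tilde V_N\|_\infty\le C(M_N^\gamma+\epsilon_N^f+(\epsilon_N^F)^\gamma)$.
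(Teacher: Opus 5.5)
Your architecture is the paper's. It also runs the Motte--Pham contraction argument for $\Tc_N$ against the lift $\bx\mapsto\hat V(\mu_N^{\lambda}[\bu,\bx])$, using three ingredients: the $\gamma$-H\"older bound for $\hat V=V^\star\in\Mc$, the one-step comparison of Lemma~\ref{lemma : difference of operators with a}, and the two-sided action matching of Lemma~\ref{lemma : optimal a^a from given law}. The second half of that matching lemma is exactly your remark that $\ba$ is a block feedback on $\mu_N^{\lambda}[\bu,\bx]$. The gap is in the other direction: implementing a mean-field kernel $\mathrm a$ with $N$ agents.

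First, $a^i=\mathfrak a(\mu_N^{\lambda}[\bu,\bx],\frac iN,x^i,Z^i)$ is not legitimate for a merely measurable kernel. The quantity $[\T^{\mathrm a}\hat V](\mu_N^{\lambda}[\bu,\bx])$ depends on $\mathrm a$ only through its $\mu_N^{\lambda}[\bu,\bx]\otimes\Uc([0,1])$-a.e.\ class, and the grid $\{\frac iN\}$ is $\lambda$-null. So the values of $\mathrm a$ there say nothing about the block action law $N\int_{I_i^N}\P_{\mathrm a(u,x^i,Z)}\,\d u$, and the mismatch $\Wc\big(\P_{(\tilde\xi_{\bx},\mathrm a(\tilde\xi_{\bx},Z))},\mu_N^{\lambda}[\bu,\bx,\ba]\big)$ can be of order one. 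Label regularity is precisely the extra hypothesis \eqref{eq : regularity_assumption_aeps} that Theorem~\ref{thm : approximate optimal policy 2} needs for this kind of implementation.

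You can repair this by feeding a label $U_i\sim\Uc(I_i^N)$ into the action; randomized profiles are averages of deterministic ones, hence dominated by $\Tc_N$. But then everything rests on your piece (c): that a stratified sample, with one independent draw per block, has expected Wasserstein error at most that of an i.i.d.\ sample from the mixture. You give no proof, and this does not follow from a generic permutation or convexity argument. Hoeffding's convex-order comparison between independent non-identical draws and i.i.d.\ draws from the mixture holds for Bernoulli counts, but it fails for general convex functionals of the empirical measure. For instance, take $N=2$, strata $\frac12(\delta_{-1}+\delta_1)$ and $\delta_0$ on $\R$, and the functional $\nu\mapsto|\int y\,\nu(\d y)|$: the stratified sample gives $\frac12$ and the i.i.d.\ sample gives $\frac38$. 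Moreover, $M_N$ is defined through i.i.d.\ samples only, so it gives you no bound for the stratified sample directly.

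The missing idea is to couple \emph{before} the transition, at the deterministic configuration, instead of comparing the noisy next states afterwards. This is what Lemmas~\ref{lemma : bounded with statistic order} and~\ref{lemma : optimal a^a from given law} do.
\begin{itemize}
\item Set $\rho:=\P_{(\tilde\xi_{\bx},\mathrm a(\tilde\xi_{\bx},Z))}$ and draw $(\tilde\xi^i,Z_0^i)_{i\in\llbracket 1,N\rrbracket}$ i.i.d.\ from $\mu_N^{\lambda}[\bu,\bx]\otimes\Uc([0,1])$. Conditionally on $\epsilon^0_1$, the points $\tilde F(\tilde\xi^i,\mathrm a(\tilde\xi^i,Z_0^i),\rho,\tilde\epsilon^i_1,\epsilon^0_1)$ are genuinely i.i.d.\ with the mean-field conditional law, so their expected empirical error is at most $M_N$ by definition.
\item Match the $\tilde\xi^i$ to the agents' points $(\frac iN,x^i)$ through the measurable optimal permutation $\sigma$ of Lemma~\ref{lemma : Mesurable Optimal Permutation}. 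By Lemma~\ref{lemma : bounded with statistic order}, the expected cost is at most $M_N+\frac1{2N}$.
\item Give agent $i$ the action computed at its matched sample, $\mathrm a(\tilde\xi^{\sigma_i},Z_0^i)$.
\item Relabel the idiosyncratic noises along $\sigma$. This preserves their joint law because $\sigma$ is independent of them. Matched pairs are then driven by the same noise, and the Lipschitz-in-expectation assumption on $F$ transfers the matching cost to the next states.
\end{itemize}
The probabilistic method then yields a deterministic $\ba^{\mathrm a}\in A^N$ with $\Wc(\rho,\mu_N^{\lambda}[\bu,\bx,\ba^{\mathrm a}])\le 2M_N+\frac1N$. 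Lemma~\ref{lemma : difference of operators with a} then gives the one-step error $C\big(M_N^{\gamma}+\epsilon_N^f+(\epsilon_N^F)^{\gamma}\big)$, which closes your contraction argument.
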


\begin{Remark}
\normalfont
    Theorem \ref{Thm : convergence value functions} provides a refined quantitative analysis of propagation of chaos compared to the classical mean field framework in \cite{motte2023quantitative}. In particular, the error bound in \eqref{eq: propagation_of_chaos} involves additional terms whose roles can be interpreted as follows.
\begin{enumerate}
    \item[(1)] In the CNEMF setting, the value function is defined on the space $\Pc_{\lambda}(I \times \Xc)$, which introduces an additional dimension corresponding to the agents’ labels. As a consequence, the quantity $M_N$, which depends on the dimension of the underlying state space, is directly affected.
    \item[(2)] The terms $\epsilon_N^f$ and $\epsilon_N^{F}$ are directly related to the convergence of $f_N$ and $F_N$ towards $f$ and $F$, respectively, in the $L^1$ sense. These error terms should be interpreted in light of interaction structures such as graphons: for instance, when assuming that a sequence of step graphons $(G_N)_{N \in \N^*}$ converges to a limiting graphon $G$ in $L^1$, these quantities naturally quantify the discrepancy induced by the finite-agent approximation.
\end{enumerate}

\end{Remark}

\subsection{Approximate optimal policies}

Our next results explain how to obtain approximate optimal policy for the $N$-agent MDP from $\epsilon$-optimal control of the CNEMF-MDP and how to quantify the error approximation. We shall rely on the notion of $\epsilon$-optimal randomized feedback policy which has been introduced in Definition \ref{def : lifted randomized feedback policy} of  Section \ref{subsec : optimal_randomized_feedback}. The proofs of these results follow as straightforward extensions of those in \cite{motte2023quantitative}, once we work on the label-state product space $I \times \Xc$ and have established the propagation of chaos result in Theorem \ref{Thm : convergence value functions}. We therefore refer the reader to the cited reference for full details.

\begin{Theorem}\label{thm : approximate optimal policy 1}
    Under the assumptions of Theorem \ref{Thm : convergence value functions}, let $\mathfrak{a}_{\epsilon}$ be an $\epsilon$-optimal randomized  feedback policy for the CNEMF-MDP.
    Then, there exists a measurable function $\bpi^{\mathfrak{a}_{\epsilon},N}$ from $\Xc^N$ to $A^N$, called feedback policy for the $N$-agent MDP, such that 
    \begin{align}\label{eq : construction_epsilon_optimal_feedback_policy_case_1}
        \boldsymbol{\pi}^{\ba_{\epsilon},N}(\bx) = \underset{\ba \in A^N}{\arg \min } \Wc \big( \P_{(\tilde{\xi}_{\bx},\boldsymbol{\mathfrak{a}}_{\epsilon}( \mu_N^{\lambda} [\bu, \bx],\tilde{\xi}_{\bx}, Z))}, \mu_N^{\lambda} [\bu, \bx, \ba ] \big), \quad \bx \in \Xc^N,
    \end{align}
    with $(\tilde{\xi}_{\bx},Z) \sim \mu_N^{\lambda}\big[\bu,\bx] \otimes \Uc([0,1])$ leading to a feedback control $\balpha^{\epsilon,N} =(\balpha_t^{\epsilon,N} := \bpi^{\mathfrak{a}_{\epsilon},N}(\bX_t)\big)_{t \in \N} \in \bAc_N$ which is 
     $\Oc( \epsilon + M_N^{\gamma} +  \epsilon_N^f + (\epsilon_N^F)^{\gamma})$, i.e
    \begin{align}
        V_N(\bx_0)  - C \Big( \epsilon + M_N^{\gamma}  + \epsilon_N^f  + (\epsilon_N^F)^{\gamma}  \Big) \; \leq \;   V_N^{\balpha^{\epsilon,N}}(\bx_0).
    \end{align}
\end{Theorem}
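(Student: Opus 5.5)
The strategy mirrors \cite{motte2023quantitative}. We compare the $N$-agent MDP run under the feedback policy $\bpi^{\mathfrak{a}_\epsilon,N}$ with the lifted CNEMF-MDP run under $\mathfrak{a}_\epsilon$, working at the level of Bellman operators on $\Xc^N$ and on $\Pc_\lambda(I\times\Xc)$.

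\textbf{Step 1: measurability of the policy.} The map $(\bx,\ba)\mapsto \Wc\big(\P_{(\tilde\xi_{\bx},\mathfrak{a}_\epsilon(\mu_N^\lambda[\bu,\bx],\tilde\xi_{\bx},Z))},\mu_N^\lambda[\bu,\bx,\ba]\big)$ is jointly measurable and continuous in $\ba$, and $A^N$ is compact. A measurable selection theorem then yields a measurable argmin $\bpi^{\mathfrak{a}_\epsilon,N}$.

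\textbf{Step 2: the $N$-agent evaluation operator.} Introduce
\[
\Tc_N^{\bpi}W(\bx)=\frac1N\sum_i f_N\big(\tfrac iN,x^i,\pi^i(\bx),\mu_N[\bu,\bx,\bpi(\bx)]\big)+\beta\,\E\big[W(\bX_1^{\bx,\bpi})\big].
\]
This operator is a $\beta$-contraction whose fixed point is $V_N^{\balpha^{\epsilon,N}}$, by Appendix C.

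\textbf{Step 3: the lifted evaluation operator.} Let $\Tc^{\mathfrak{a}_\epsilon}$ denote the corresponding evaluation operator on $\Pc_\lambda(I\times\Xc)$, with fixed point $V^{\mathfrak{a}_\epsilon}$. By $\epsilon$-optimality, $V^{\mathfrak{a}_\epsilon}\ge\hat V-\epsilon$.

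\textbf{Step 4: consistency estimate.} The central claim is
\begin{align}
\sup_{\bx\in\Xc^N}\Big|\Tc_N^{\bpi}\big[\hat W\circ\mu_N^\lambda[\bu,\cdot]\big](\bx)-\big(\Tc^{\mathfrak{a}_\epsilon}\hat W\big)(\mu_N^\lambda[\bu,\bx])\Big|\le C\big(M_N^\gamma+\epsilon_N^f+(\epsilon_N^F)^\gamma\big)
\end{align}
for $\gamma$-Hölder functions $\hat W$, applied with $\hat W=V^{\mathfrak{a}_\epsilon}$. Iterating through the contraction then gives
\[
\big|V_N^{\balpha^{\epsilon,N}}(\bx)-V^{\mathfrak{a}_\epsilon}(\mu_N^\lambda[\bu,\bx])\big|\le \frac{C}{1-\beta}(\cdots).
\]
Combining this with Theorem \ref{Thm : convergence value functions} yields
\[
V_N^{\balpha^{\epsilon,N}}(\bx_0)\ge \hat V(\mu_N^\lambda)-\epsilon-C(\cdots)\ge V_N(\bx_0)-C(\epsilon+\cdots),
\]
which is the claim.

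To prove the consistency estimate, split the reward and transition terms as follows.

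\emph{Reward term.} Replace $f_N$ by $f$ at cost $\epsilon_N^f$, using Assumption \ref{assumption : fN to f and FN to F}(i). Then compare $\frac1N\sum_i f(\tfrac iN,\cdot)$ with $\int f\,d\big(\text{law of }(U,\tilde\xi,\mathfrak a_\epsilon)\big)$. The Lipschitz property of $f$ in $u$ over each $I_i^N$ costs $O(1/N)$. The argmin property bounds $\Wc\big(\mu_N^\lambda[\bu,\bx,\bpi(\bx)],\P_{(\tilde\xi,\mathfrak a_\epsilon)}\big)$ by the Wasserstein distance from the target law to an empirical measure of i.i.d. samples, since the argmin beats any random choice $\ba$. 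Taking expectation, this is at most $M_N+O(1/N)$. Here we use that sampling $Z^i$ i.i.d. and setting $a^i=\mathfrak a_\epsilon(\cdot,U_i,x^i,Z^i)$ realizes an admissible $\ba$, with $U_i\sim\lambda|_{I_i^N}$ handled by the stratified lift.

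\emph{Transition term.} Let $\bX_1$ be the $N$-agent next state and $\mu_1$ the lifted next conditional law given $\epsilon^0_1$. The $\gamma$-Hölder property of $\hat W$ reduces the term to bounding $\E\,\Wc\big(\mu_N^\lambda[\bu,\bX_1],\mu_1\big)^\gamma$. Split this into three pieces. First, replacing $F_N$ by $F$ costs $\epsilon_N^F$, via Assumption \ref{assumption : fN to f and FN to F}(ii). Second, the Lipschitz property of $F$ in the measure argument, together with the action-selection bound above, gives an $L_F M_N$ contribution. Third, conditional on the common noise the idiosyncratic noises are independent across $i$, so the conditional law of $\mu_N^\lambda[\bu,\bX_1]$ is a stratified empirical measure of independent, non-identical samples whose average law is $\mu_1$. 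Its Wasserstein error is again $O(M_N)$ after a standard coupling argument. Jensen's inequality then converts these bounds into powers $\gamma$.

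\textbf{Main obstacle.} The hard part is this last piece: the non-identically distributed, stratified empirical measure under common noise. The i.i.d. bound $M_N$ from \cite{fournier2015rate} must be transferred to it. My plan is to couple each $(\tfrac iN,X_1^i)$ with $(U_i,\tilde X_1^i)$, where $U_i$ is uniform on $I_i^N$, which costs $1/N$ plus the Lipschitz regularity of $F$ in $u$. The resulting independent draws are then one realization of stratified sampling from $\mu_1$. Stratified sampling is dominated by the i.i.d. expected distance up to constants, so this yields the required bound.
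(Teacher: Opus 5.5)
\textbf{There is a genuine gap in your choice of test function.} The architecture is right: compare one Bellman step, use the contraction, then invoke Theorem \ref{Thm : convergence value functions}. But your Step 4 estimate needs $\hat W$ to be $\gamma$-H\"older, because the transition term compares $\hat W(\mu_1)$ and $\hat W(\mu_N^{\lambda}[\bu,\bX_1])$ for random measures that are only Wasserstein-close. You apply it with $\hat W=V^{\mathfrak a_\epsilon}$, the value of the feedback policy, and nothing gives this function any regularity.

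$\mathfrak a_\epsilon$ is merely measurable in $\mu$. For instance, the policies built by the discretization of Theorem 4.1 in \cite{motte2022mean} are defined cell by cell on a finite partition of $\Pc_{\lambda}(I\times\Xc)$. So $V^{\mathfrak a_\epsilon}$ can be discontinuous. The H\"older bound \eqref{eq : gamma_Holder_property} holds for elements of $\Mc$, i.e.\ for $\hat V=V^\star$, the fixed point of the \emph{sup} operator $\Tc$, not for fixed points of policy-evaluation operators.

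\textbf{The fix} is the route of \cite{motte2023quantitative} that the paper follows: test with $\widetilde V(\bx):=\hat V(\mu_N^{\lambda}[\bu,\bx])$ and use a one-sided bound. Set $\mu:=\mu_N^{\lambda}[\bu,\bx]$, $\bpi:=\bpi^{\mathfrak a_\epsilon,N}$ and $\mathfrak a_\epsilon^{\mu}:=\mathfrak a_\epsilon(\mu,\cdot,\cdot,\cdot)$. The Bellman form of $\epsilon$-optimality \eqref{eq : verification result feedback policy} reads $\hat V(\mu)\le[\T^{\mathfrak a_\epsilon^{\mu}}\hat V](\mu)+\epsilon$. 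It also follows from the value form $\hat V\le\hat V^{\bpi^{\mathfrak a_\epsilon}}+\epsilon$ by monotonicity of $\Tc^{\bpi^{\mathfrak a_\epsilon}}$; the value form should carry $\epsilon/(1-\beta)$ under the paper's definition, which is harmless. This gives
\[
\widetilde V(\bx)-\Tc_N^{\bpi}\widetilde V(\bx)\;\le\;\epsilon+\widetilde{\T^{\mathfrak a_\epsilon^{\mu}}\hat V}(\bx)-\T_N^{\bpi(\bx)}\widetilde V(\bx)\;\le\;\epsilon+C\big(M_N^{\gamma}+\epsilon_N^f+(\epsilon_N^F)^{\gamma}\big).
\]
The last step is Lemma \ref{lemma : difference of operators with a} with $\ba=\bpi(\bx)$. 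Its Wasserstein term is at most $2M_N+\frac1N$ by the argmin property and Lemma \ref{lemma : optimal a^a from given law}. Since $\Tc_N^{\bpi}$ is a monotone $\beta$-contraction with fixed point $V_N^{\balpha^{\epsilon,N}}$, you get $\sup_{\bx}(\widetilde V-V_N^{\balpha^{\epsilon,N}})\le(1-\beta)^{-1}\sup_{\bx}(\widetilde V-\Tc_N^{\bpi}\widetilde V)$. Theorem \ref{Thm : convergence value functions} then concludes. The mean-field value of the policy never appears.

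\textbf{Two further flaws in your Step 4 sketch.} First, in the transition term the conditional average law of the $N$-agent next states is not $\mu_1$. Agent $i$ plays the deterministic argmin action $a^i$, while $\mu_1$ integrates over the randomized action $\mathfrak a_\epsilon(\mu,u,x^i,Z)$ for $u\in I_i^N$. The argmin matches $\mu_N^{\lambda}[\bu,\bx,\ba]$ to the target only globally, not stratum by stratum. Paying for this discrepancy needs Lipschitz continuity of $F$ in $(u,x,a)$ and a coupling across agents; Lipschitz in the measure argument is not enough.

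Second, your claim that stratified or non-identically distributed independent samples have expected Wasserstein error $O(M_N)$ is unproved, and $M_N$ is defined for i.i.d.\ samples. The paper avoids both problems as follows:
\begin{itemize}
\item it samples i.i.d.\ $(\tilde\xi^i,Z_0^i)\sim\mu_N^{\lambda}[\bu,\bx]\otimes\Uc([0,1])$, so $M_N$ applies by definition;
\item it matches these samples to $(\frac iN,x^i,a^i)$ through the measurable optimal permutation $\sigma$ of Lemma \ref{lemma : Mesurable Optimal Permutation};
\item it permutes the noises, $\tilde\epsilon_1^i=\epsilon_1^{\sigma^{-1}(i)}$, which remain i.i.d.\ and independent of the samples.
\end{itemize}
This is done in Lemmas \ref{lemma : bounded with statistic order}--\ref{lemma : optimal a^a from given law} and the proof of Lemma \ref{lemma : difference of operators with a}. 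With the test function $\hat V$, your Step 4 is exactly that lemma.
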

\begin{Theorem}\label{thm : approximate optimal policy 2}
    Under the assumptions of Theorem \ref{Thm : convergence value functions}, let $\mathfrak{a}_{\epsilon}$ be an $\epsilon$-optimal randomized feedback policy  for the CNEMF-MDP, assumed to satisfy the  regularity condition
    \small
    \begin{align}\label{eq : regularity_assumption_aeps}
        \E \Big[ d_A \big( \mathfrak{a}_{\epsilon}(\mu,u,x,Z), \mathfrak{a}_{\epsilon}(\mu,u',x',Z) \big) \Big] \leq  K \Big( | u-u'| + d( x,x') \Big), \quad \forall (u,x),(u',x') \in I \times \Xc, \hspace{0.3 cm} \mu \in \Pc_{\lambda}(I \times \Xc),
     \end{align}
     \normalsize
     where $Z \sim \Uc([0,1])$ and for some positive constant $K$. We consider the randomized feedback policy in the $N$-agent model by
    \begin{align}
        \bpi_{r}^{\mathfrak{a}_{\epsilon},N}(\bx,\bu) := \big(\mathfrak{a}_{\epsilon}(\mu_N^{\lambda}[\bu,\bx], \frac{i}{N},x^i,u^i )\big)_{i \in \llbracket 1, N \rrbracket}, \quad \bx :=(x^i)_{i \in \llbracket 1, N \rrbracket} \in \Xc^N, \bu = (u^i)_{i \in \llbracket 1, N \rrbracket} \in [0,1]^N.
    \end{align}
    Then, the randomized feedback control $\balpha^{r,\epsilon,N} = \big(\balpha_t^{r,\epsilon,N} := \bpi_r^{\mathfrak{a}_{\epsilon},N}(\bX_t, \boldsymbol{Z}_t) \big)_{t \in \N} \in \bAc_N$  
    where $\big \lbrace \boldsymbol{Z}_t =(Z_t^i)_{i \in \llbracket 1, N \rrbracket}, t \in \N \big \rbrace$ is a family of mutually  i.i.d uniform random variables on $[0,1]$, independant of $\boldsymbol{\epsilon}^N= \big( (\epsilon_t^{i/N})_{i \in \llbracket 1, N \rrbracket}, \epsilon_t^0 \big)_{t \in \N}$, is an $\Oc\big( \epsilon + M_N^{\gamma}   +  \epsilon_N^f + (\epsilon_N^F)^{\gamma} \big)$ optimal control for $V_N(\bx_0)$ i.e
    \begin{align}
        V_N(\bx_0)  - C (1+K) \big( \epsilon + M_N^{\gamma} + \epsilon_N^ f+ (\epsilon_N^F)^{\gamma}  \big) \; \leq  \; V_N^{\balpha^{r,\epsilon,N}}(\bx_0).
    \end{align}
\end{Theorem}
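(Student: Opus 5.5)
The strategy follows \cite{motte2023quantitative}, using the Bellman operator $\Tc_N$ of the $N$-agent MDP on $\Xc^N$ (Appendix \ref{appendix : bellman_equation_N_agents}) and the Bellman operator $\Tc$ on $\Pc_\lambda(I\times\Xc)$. The key step is a performance-difference argument under the randomized feedback control $\balpha^{r,\epsilon,N}$. Let $\mathfrak{a}_\epsilon$ be $\epsilon$-optimal. The associated policy Bellman operator $\Tc^{\mathfrak{a}_\epsilon}$ on $\Pc_\lambda(I\times\Xc)$ then has a fixed point $V^{\mathfrak{a}_\epsilon}$ with $V^{\mathfrak{a}_\epsilon}\ge \hat V-\epsilon$. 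Likewise, the $N$-agent randomized feedback policy $\bpi_r^{\mathfrak{a}_\epsilon,N}$ defines a policy operator $\Tc_N^{\bpi_r}$ on $L^\infty_m(\Xc^N)$, which is a $\beta$-contraction with fixed point $V_N^{\balpha^{r,\epsilon,N}}$.

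\emph{Comparison of value functions.} Write $\tilde V^{\mathfrak{a}_\epsilon}_N(\bx):=V^{\mathfrak{a}_\epsilon}(\mu_N^\lambda[\bu,\bx])$. Since $\Tc_N^{\bpi_r}$ is a $\beta$-contraction with fixed point $V_N^{\balpha^{r,\epsilon,N}}$,
\[
\|V_N^{\balpha^{r,\epsilon,N}}-\tilde V^{\mathfrak{a}_\epsilon}_N\|_\infty\le \tfrac{1}{1-\beta}\|\Tc_N^{\bpi_r}\tilde V^{\mathfrak{a}_\epsilon}_N-\tilde V^{\mathfrak{a}_\epsilon}_N\|_\infty .
\]
We use $\tilde V^{\mathfrak{a}_\epsilon}_N=(\Tc^{\mathfrak{a}_\epsilon}V^{\mathfrak{a}_\epsilon})(\mu_N^\lambda[\bu,\bx])$ and split the one-step discrepancy into two parts.

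The reward part compares $\frac1N\sum_i \E[f_N(\frac iN,x^i,\alpha^i,\mu_N[\bu,\bx,\boldsymbol\alpha])]$ with $\int f(u,x,a,\P_{(\tilde\xi,\mathfrak{a}_\epsilon(\cdot))})\,\d(\cdot)$. It is handled with Assumption \ref{assumption : fN to f and FN to F}(i), which gives $\epsilon_N^f$, and Lipschitz continuity in $u$ (replacing $u\in I_i^N$ by $i/N$ costs $1/N=o(M_N)$). The empirical measure of the i.i.d.-sampled actions $\alpha^i=\mathfrak{a}_\epsilon(\mu_N^\lambda,\frac iN,x^i,Z^i)$ must be compared with its lifted law, which costs $C\,M_N$ in expectation. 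Here the regularity \eqref{eq : regularity_assumption_aeps} with constant $K$ is needed, since within $I_i^N$ the control uses the label $i/N$ instead of $u$; this contributes $K/N$.

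The transition part compares $\E[\tilde V^{\mathfrak{a}_\epsilon}_N(\bX_1)]$ with $\E[V^{\mathfrak{a}_\epsilon}(\mu_1)]$, where $\mu_1$ is the mean-field next-step law. It is bounded by the $\gamma$-Hölder modulus of $V^{\mathfrak{a}_\epsilon}$ applied to $\E\,\Wc(\mu_N^\lambda[\bu,\bX_1],\mu_1)$, using Jensen's inequality for the concave power $\gamma$. The Wasserstein term is in turn controlled by $\epsilon_N^F$ (Assumption \ref{assumption : fN to f and FN to F}(ii)), by $L_F$ times the previous discrepancy of the empirical state-action measures (bounded by $C(1+K)M_N$), and by $M_N$ for the conditional empirical averaging given $\epsilon^0_1$ of the independent idiosyncratic noises $\epsilon^{i/N}_1$. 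Together this yields $C(1+K)(M_N^\gamma+\epsilon_N^f+(\epsilon_N^F)^\gamma)$.

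\emph{Main obstacle.} The hardest step is proving that $V^{\mathfrak{a}_\epsilon}$, like $\hat V$ in Theorem \ref{Thm : convergence value functions}, is $\gamma$-Hölder on $\Pc_\lambda(I\times\Xc)$ with $\gamma=\min(1,|\ln\beta|/\ln(2L_f))$ and a constant depending on $K$. I would follow the proof of Hölder regularity of $\hat V$. The $\Tc^{\mathfrak{a}_\epsilon}$-iterates are Lipschitz with constants growing like $(2L_F(1+K))^n$; truncating at $n\sim\ln(1/\Wc)/|\ln\beta|$ and interpolating with the geometric tail $\beta^n$ gives the Hölder bound. The Lipschitz step of $\Tc^{\mathfrak{a}_\epsilon}$ relies on \eqref{eq : regularity_assumption_aeps} via a coupling in $\Pc_\lambda$ that keeps the label fixed. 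If instead the plan only gives regularity in terms of $L_f,L_F,K$ with a different exponent, I would absorb it into $C$ as in \cite{motte2023quantitative}.

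\emph{Conclusion.} Combining with Theorem \ref{Thm : convergence value functions}:
\[
V_N^{\balpha^{r,\epsilon,N}}(\bx_0)\ge V^{\mathfrak{a}_\epsilon}(\mu_N^\lambda)-C(1+K)(\cdots)\ge \hat V(\mu_N^\lambda)-\epsilon-C(1+K)(\cdots)\ge V_N(\bx_0)-C(1+K)\bigl(\epsilon+M_N^\gamma+\epsilon_N^f+(\epsilon_N^F)^\gamma\bigr).
\]
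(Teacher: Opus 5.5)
\textbf{There is a genuine gap.} Your argument needs $V^{\mathfrak{a}_\epsilon}$, the value of the policy $\mathfrak{a}_\epsilon$, to be $\gamma$-Hölder in the measure variable. You need this in the transition part, to bound $\E[V^{\mathfrak{a}_\epsilon}(\mu_N^\lambda[\bu,\bX_1])]-\E[V^{\mathfrak{a}_\epsilon}(\mu_1)]$ by a Wasserstein distance between two different measures. That regularity is not available.

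Condition \eqref{eq : regularity_assumption_aeps} controls $\mathfrak{a}_\epsilon$ only in $(u,x)$, uniformly in $\mu$. Nothing is assumed on $\mu\mapsto\mathfrak{a}_\epsilon(\mu,\cdot)$. In your Lipschitz step for $\Tc^{\bpi^{\mathfrak{a}_\epsilon}}$, two nearby measures $\mu,\mu'$ are pushed through the unrelated maps $\mathfrak{a}_\epsilon(\mu,\cdot)$ and $\mathfrak{a}_\epsilon(\mu',\cdot)$, and no coupling of $\mu,\mu'$ can compensate. (A label-preserving coupling would in any case only control $\int_I \Wc(\mu^u,\mu'^u)\,\d u$, which is not bounded by $\Wc(\mu,\mu')$.)

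This is not a technicality. The $\epsilon$-optimal randomized feedback policies obtained via Theorem 4.1 of \cite{motte2022mean} come from a discretization of the compact space $\Pc_\lambda(I\times\Xc)$ and are piecewise constant in $\mu$. So $\mu\mapsto\bpi^{\mathfrak{a}_\epsilon}(\mu)$ jumps, and $V^{\mathfrak{a}_\epsilon}$ is in general discontinuous. Your fallback does not rescue this. Even if $\mathfrak{a}_\epsilon$ were Lipschitz in $\mu$, your own iteration would give the exponent $\min(1,|\ln\beta|/\ln(2L_F(1+K)))$, and a change of exponent cannot be absorbed into $C$.

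\textbf{How to fix it.} This is the route of \cite{motte2023quantitative} that the paper invokes: never use $V^{\mathfrak{a}_\epsilon}$, and anchor instead on $\hat V=V^\star\in\Mc$, which is $\gamma$-Hölder.
\begin{enumerate}
\item Set $\mu:=\mu_N^\lambda[\bu,\bx]$ and $\tilde V(\bx):=\hat V(\mu)$. By $\epsilon$-optimality, $\tilde V(\bx)\le[\T^{\mathfrak{a}_\epsilon^{\mu}}\hat V](\mu)+\epsilon$.
\item Apply Lemma \ref{lemma : difference of operators with a} for each realization $\ba=\bpi_r^{\mathfrak{a}_\epsilon,N}(\bx,\boldsymbol{z})$ and integrate in $\boldsymbol{z}$, using Jensen. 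Combine with your correct bound $C(1+K)M_N$ on the expected distance between $\mu_N^\lambda[\bu,\bx,\bpi_r^{\mathfrak{a}_\epsilon,N}(\bx,\boldsymbol{Z}_0)]$ and $\P_{(\tilde\xi_{\bx},\mathfrak{a}_\epsilon(\mu,\tilde\xi_{\bx},Z))}$. This gives $\tilde V\le \Tc_N^{\bpi_r}\tilde V+\delta$ with $\delta=\epsilon+C(1+K)(M_N^\gamma+\epsilon_N^f+(\epsilon_N^F)^\gamma)$.
\item Both operators evaluate $\mathfrak{a}_\epsilon$ at the same measure argument $\mu_N^\lambda[\bu,\bx]$. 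That is exactly why no regularity in $\mu$ is needed, only the Hölder property of $\hat V$.
\item Since $V_N^{\balpha^{r,\epsilon,N}}=\Tc_N^{\bpi_r}V_N^{\balpha^{r,\epsilon,N}}$ and $\Tc_N^{\bpi_r}$ is monotone and $\beta$-contracting, you get $\sup(\tilde V-V_N^{\balpha^{r,\epsilon,N}})\le\beta\sup(\tilde V-V_N^{\balpha^{r,\epsilon,N}})+\delta$. Hence $\tilde V-V_N^{\balpha^{r,\epsilon,N}}\le\delta/(1-\beta)$.
\item Theorem \ref{Thm : convergence value functions} then concludes.
\end{enumerate}
This one-sided inequality is all that is required; the two-sided sup-norm comparison you aimed for is neither needed nor available.

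Minor point: the verification result gives $\hat V^{\bpi^{\mathfrak{a}_\epsilon}}\ge\hat V-\epsilon/(1-\beta)$, not $\hat V-\epsilon$.
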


\begin{Remark}
\normalfont
\noindent 
    \begin{enumerate}
        \item [(1)] Theorem \ref{thm : approximate optimal policy 1} provides a way to compute approximate optimal policies for the $N$-agent MDP starting from $\epsilon$-optimal randomized feedback map $\mathfrak{a}_{\epsilon}$ for the CNEMF-MDP. Indeed, it says that given a state $\bx \in \Xc^N$, we are looking for the label-state-action triplet  $(\bu,\bx,\ba)$ such that their lifted empirical distribution $\mu_N^{\lambda}\big[\bu,\bx,\ba]$ is as close as possible to $\P_{(\tilde{\xi}_{\bx},\boldsymbol{\mathfrak{a}}_{\epsilon}( \mu_N^{\lambda} [\bu, \bx],\tilde{\xi}_{\bx}, Z))}$ in the Wasserstein distance. However, computing the argmin \eqref{eq : construction_epsilon_optimal_feedback_policy_case_1} at any time can be particularly difficult in practice.
        \item [(2)] In comparison with Theorem \ref{thm : approximate optimal policy 1}, Theorem \ref{thm : approximate optimal policy 2} proposes a natural and easy procedure to compute approximate optimal policies for the $N$-agent MDP if we are given an $\epsilon$-randomized feedback policy $\mathfrak{a}_{\epsilon}$ satisfying the regularity assumption \eqref{eq : regularity_assumption_aeps} for the CNEMF-MDP. Indeed, it says that it is enough to input into the measure argument of the $\epsilon$-optimal randomized feedback map  $\mathfrak{a}_{\epsilon}$ the empirical lifted label-state distribution of the $N$-agent MDP instead of its theoretical label-state distribution from the CNEMF-MDP. Moreover, instead of inputting into the label-state argument of $\mathfrak{a}_{\epsilon}$ the theoretical label-state $(U,X_t)$, we input the $N$-agent individual states with label $\frac{i}{N}$ for every agent $i \in \llbracket 1, N \rrbracket$, and moreover, we use a randomization  at any time and for any agent.
    \end{enumerate}
\end{Remark}

\section{Proof of main results}

This section is devoted to the proof of the main results stated above. As the CNEMF-MDP can be viewed as a standard McKlean-Vlasov control problem on the label-state product space $I \times \Xc$ with marginal constraint on $I$, we will mostly rely on the existing results borrowed from \cite{motte2022mean} and \cite{motte2023quantitative}.

\subsection{Proof of Theorem \ref{thm: equivalence_value_functions_weak_strong}}

\subsubsection{Lifted MDP on $\Pc_{\lambda}(I \times \Xc)$ in the weak formulation}

We now show how we can lift the CNEMF-MDP  to a classical MDP on the space of probabilities $\Pc_{\lambda}(I \times \Xc)$.  We set  $\F^0 = (\Fc_t^0)_{t \in \N^*}$ the filtration generated by the common noise $\Fc_t^0 = \sigma \big( (\epsilon_s^0)_{s \leq t}\big)$. Let $\alpha \in  \Ac^{W}$ be an open-loop control, $\xi$  an initial state $\sigma(U) \vee \Gc$ measurable,  and its associated state process $X=\big(X^{\alpha,\xi}_t)_{t \in \N} $ given by the dynamics \eqref{eq:dynamics_NEMFC_weak_formulation} . We  denote by $\lbrace \mu_t = \P^0_{(U,X_t)}, t \in \N \rbrace$ the random $\Pc_{\lambda}(I \times \Xc)$-valued process, which is $\F^0$-adapted by Proposition A.1 in \cite{motte2022mean}.

\begin{Lemma}

\noindent Define the measurable map $\tilde{F}$ on $I \times \Xc \times A \times \Pc(I \times \Xc \times A) \times E \times E^0 \to I \times \Xc$ as 
\begin{align}\label{eq : definition tilde F}
    \tilde{F}(u,x,a,\mu,e,e^0) := \big(u , F(u,x,a,\mu,e,e^0)\big).
\end{align}
Then, 
\begin{align}\label{eq : dynamics of mu}
    \mu_{t+1} = \tilde{F}(\cdot, \cdot, \cdot, \P^0_{(U,X_t,\alpha_t)},  \cdot,\epsilon_{t+1}^0) \sharp \big(\P^0_{(U,X_t,\alpha_t)} \otimes \lambda_{\epsilon} \big) \quad \text{$\P$-a.s},
\end{align}
\end{Lemma}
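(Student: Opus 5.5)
The plan is to compute the conditional law $\mu_{t+1}=\P^0_{(U,X_{t+1})}$ directly from the dynamics \eqref{eq:dynamics_NEMFC_weak_formulation}. By definition of $\tilde F$,
\[
(U,X_{t+1})=\tilde F\big(U,X_t,\alpha_t,\P^0_{(U,X_t,\alpha_t)},\epsilon_{t+1},\epsilon^0_{t+1}\big).
\]
The argument exploits three facts:
\begin{itemize}
\item[(a)] The triple $(U,X_t,\alpha_t)$ is $\sigma(U,\Gamma,(\epsilon_s)_{s\le t},(\epsilon^0_s)_{s\le t})$-measurable, since $X_t$ is a Borel function of these variables, as in Step 1 of Lemma \ref{lemma : measurability of mapping} transposed to the weak setting.
\item[(b)] The measure $\P^0_{(U,X_t,\alpha_t)}$ is $\Fc^0_t$-measurable (Proposition A.1 in \cite{motte2022mean}), so it equals $\Phi_t((\epsilon^0_s)_{s\le t})$ for some measurable $\Phi_t$.
\item[(c)] The variable $\epsilon_{t+1}$ is independent of $(U,\Gamma,(\epsilon_s)_{s\le t},\epsilon^0)$ with law $\lambda_\epsilon$.
\end{itemize}

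First I will make precise what $\P^0$ means. Since $\epsilon^0$ is independent of $(U,\Gamma,\epsilon)$, the conditional law of a variable $h(U,\Gamma,\epsilon,\epsilon^0)$ given $\epsilon^0$ is obtained by freezing $\epsilon^0$ and integrating over the law of $(U,\Gamma,\epsilon)$; this is the standard freezing lemma for independent variables. For a bounded measurable test function $\varphi$ on $I\times\Xc$ I will then compute
\[
\E\big[\varphi(U,X_{t+1})\,\big|\,\epsilon^0\big]=\E\Big[\varphi\big(\tilde F(U,X_t,\alpha_t,\Phi_t(e^0_{\le t}),\epsilon_{t+1},e^0_{t+1})\big)\Big]\Big|_{e^0=\epsilon^0}.
\]

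Next I will integrate out $\epsilon_{t+1}$, which is independent of everything else. For fixed $e^0$ this gives
\[
\int \psi_{e^0}(U,X_t,\alpha_t)\,\P(\cdot)\quad\text{with}\quad \psi_{e^0}(u,x,a):=\int_E\varphi\big(\tilde F(u,x,a,\Phi_t(e^0_{\le t}),e,e^0_{t+1})\big)\lambda_\epsilon(\d e).
\]
The conditional expectation given $\epsilon^0$ of $\psi_{\epsilon^0}(U,X_t,\alpha_t)$ equals $\int\psi_{\epsilon^0}\,\d\P^0_{(U,X_t,\alpha_t)}$, because $(U,X_t,\alpha_t)$ depends on $\epsilon^0$ only through $(\epsilon^0_s)_{s\le t}$, and $\epsilon^0_{t+1}$ is independent of it. Unwinding the definitions, this integral is exactly $\int\varphi\,\d\big[\tilde F(\cdot,\cdot,\cdot,\P^0_{(U,X_t,\alpha_t)},\cdot,\epsilon^0_{t+1})\sharp(\P^0_{(U,X_t,\alpha_t)}\otimes\lambda_\epsilon)\big]$. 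Taking $\varphi$ in a countable measure-determining family (possible since $I\times\Xc$ is compact Polish) gives the identity \eqref{eq : dynamics of mu} $\P$-a.s.

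The main obstacle is to justify rigorously the substitution of the random measure $\P^0_{(U,X_t,\alpha_t)}$ and of $\epsilon^0_{t+1}$ as frozen parameters inside the conditional expectation. This requires joint measurability of $(u,x,a,\nu,e,e^0)\mapsto\varphi(\tilde F(u,x,a,\nu,e,e^0))$ in the measure argument $\nu$ (Borel measurability of $F$ in Assumption \ref{assumptions: F and f}) together with the measurable representation $\Phi_t$. To handle this I will use a regular version of the conditional law, and apply the freezing lemma to the pair $\big((U,\Gamma,(\epsilon_s)_{s\le t+1}),\epsilon^0\big)$ of independent variables. Once that is done, the remaining steps are Fubini computations.
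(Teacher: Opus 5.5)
Your proposal is correct and follows the paper's route. The paper's short proof combines the dynamics, the $\F^0$-adaptedness of $\P^0_{(U,X_t,\alpha_t)}$ (Proposition A.1 in \cite{motte2022mean}) and the independence of $\epsilon_{t+1}$ from $(U,X_t,\alpha_t)$ and the common noise to get $\P^0_{(U,X_t,\alpha_t,\epsilon_{t+1})}=\P^0_{(U,X_t,\alpha_t)}\otimes\lambda_\epsilon$, and your test-function and freezing computation is a detailed verification of that product identity followed by the push-forward through $\tilde F$. The one thing to tidy is that when you freeze $\epsilon^0$, you must freeze $X_t$ and $\alpha_t$ as well, writing them as Borel functions of $(U,\Gamma,(\epsilon_s)_{s\le t},(\epsilon^0_s)_{s\le t})$, as your last paragraph in fact intends.
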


\begin{proof}
   The results follows  by definition of $X_{t+1}$ in \eqref{eq:dynamics_NEMFC_weak_formulation}, the fact that $\big(\P^0_{(U,X_t,\alpha_t)}\big)_{t \in \N}$ is an $\F^0$-adapted process (see Proposition A.1 in \cite{motte2022mean}) and after noticing that $\epsilon_{t+1}$ is independent of the random vector $(U,X_t,\alpha_t, \epsilon^0_{t+1})$, which implies $\P^0_{(U,X_t,\alpha_t, \epsilon_{t+1})} = \P^0_{(U,X_t,\alpha_t)} \otimes \lambda_{\epsilon}$.
\end{proof}
Similarly to the approach in \cite{motte2022mean}, we now consider the $\F^0$-adapted  control  process to be $\balpha_t = \P^0_{(U,X_t,\alpha_t)}$ valued in the space of probability measures $\boldsymbol{A} := \Pc(I \times \Xc \times A)$, endowed with its natural $\sigma$-algebra with the Wasserstein metric. We note that this $\boldsymbol{A}$-valued process has to satisfy the property that $\pr_{12 } \sharp \balpha_t = \mu_t$ at any $t \in \N$.

The marginal constraint is handled by the coupling argument shown in Lemma 4.1 in \cite{motte2022mean}. We shall denote by $\zeta \in L^0(\Pc_{\lambda}(I \times \Xc)^2 \times I \times \Xc \times [0,1] ; I \times \Xc)$ the measurable coupling which enables the construction of a measurable map $\bp : \Pc_{\lambda}(I \times \Xc) \times \bA \to \bA$ as 
\begin{align}\label{eq : coupling_projection}
    \bp(\mu,\ba) := \P_{( \zeta(\pr_{12} \sharp\ba, \mu,U,\xi,Z),\alpha_0 )}, \quad \mu \in \Pc_{\lambda}(I \times \Xc), \hspace{0.2 cm} \ba \in \bA ,
\end{align}
where $((U,\xi),\alpha_0) \sim \ba$, and $Z$ is a uniform random variable independent of $(U,\xi,\alpha_0)$. We recall that $\bp$ is such that for all $(\mu,\boldsymbol{a}) \in \Pc_{\lambda}( I \times \Xc) \times \boldsymbol{A}$, we have $\pr_{12} \sharp\bp(\mu,\ba) = \mu, \quad \text{ and if } \pr_{12}\sharp \ba = \mu, \text{ then } \bp(\mu,\ba) = \ba$.
By using the coupling projection $\bp$, the dynamics \eqref{eq : dynamics of mu} can be rewritten as 
\begin{align}\label{eq : dynamics_lifted_weak_formulation}
    \mu_{t+1} = \hat{F} \big(\mu_t, \balpha_t,\epsilon_{t+1}^0 \big) \quad \P-\text{a .s}, \quad t \in \N,
\end{align}
where the function $\hat{F} : \Pc_{\lambda}(I \times \Xc) \times \bA \times E^0 \to \Pc_{\lambda}(I \times \Xc)$ is defined by
\begin{align}\label{eq : definition hat F}
    \hat{F}(\mu,\ba,e^0) := \tilde{F}(\cdot,\cdot,\cdot, \boldsymbol{p}(\mu,\ba), \cdot,e^0)\sharp \big( \boldsymbol{p}(\mu,\ba) \otimes \lambda_{\epsilon} \big),
\end{align}
where $\tilde{F}$ has been introduced in \eqref{eq : definition tilde F}. $\hat{F}$ is clearly measurable as $\tilde{F}$ and $\bp$ are measurable.
Let's define the measurable function $\hat{f} : \Pc_{\lambda}(I \times \Xc) \times \boldsymbol{A}  \to \R$ by 
\begin{align}\label{eq : hatf_weak_lifted_weak_formulation}
    \hat{f}(\mu,\ba) := \int_{I \times \Xc \times A} f \big(u,x,a, \boldsymbol{p}(\mu,\ba) \big) \boldsymbol{p}(\mu,\ba)(\d u , \d x ,\d a).
\end{align}
Now, the MDP with characteristics $\big(\Pc_{\lambda}(I \times \Xc), \bA = \Pc(I \times \Xc \times A), \hat{F},\hat{f}, \beta \big)$ is therefore well posed. We now define $\bAc$ the set of $\F^0-$adapted processes valued in $\bA$ and given an open-loop control $\bnu =(\bnu_t)_{t \in \N} \in \bAc$, we consider the controlled dynamics
\begin{align}\label{eq : dynamics canonical formulation}
\begin{cases}
    \mu_{t+1} &= \hat{F}(\mu_t,\bnu_t,\epsilon_{t+1}^0), \quad \P {\text{ a.s}},\quad t \in \N, \\
     \mu_0 &= \mu \in \Pc_{\lambda}(I \times \Xc),
\end{cases}
\end{align}
with associated value function
\begin{align}\label{eq : value function canonical formulation}
    \hat{V}^{\bnu}(\mu) := \E \Big[ \sum_{t \in \N}  \beta^t \hat{f}(\mu_t,\bnu_t) \Big], \quad \hat{V}(\mu) := \underset{\bnu \in \bAc}{\sup} \hat{V}^{\bnu}(\mu).
\end{align}
Given an initial condition $\xi$, we also define the relaxed operator $\Lc_{\xi}^0$ associated to the control space $\bA$ as follows 
\begin{align}\label{eq : operators Lxi0}
\begin{cases}
  \mathcal{L}_{\xi}^0 : \mathcal{A}^{W} &\longrightarrow \bAc,  \\ \quad
\alpha = (\alpha_t)_{t} &\longmapsto \balpha^{R} = (\balpha_t^R)_t 
: \balpha_t^R:= \P^0_{(U,X_t,\alpha_t)}, \quad 
t \in  \N ,  \\
\end{cases}
\end{align}
In Appendix \ref{subsec: lifted_strong_formulation}, we show similarly how we can lift the strong control problem on $\Pc_{\lambda}(I \times \Xc)$ following the same steps. Given $\xi$, $\boldsymbol{\xi}$, $\alpha \in \Ac^{W}$ and $\balpha \in \Ac^S$ satisfying the Assumptions of Proposition \ref{lemma: equivalence_between_weak_strong}, then it follows from Proposition \ref{lemma: equivalence_between_weak_strong} that $\balpha^{R} := \Lc_{\xi}^0(\alpha) =\Lc^0_{\boldsymbol{\xi}}(\balpha) $, where the map $\Lc_{\boldsymbol{\xi}}^0$ has been introduced in \eqref{eq : Lxi mapping_strong_formulation} and the process $\mu_t = \P^0_{(U,X_t)}=\P^0_{X_t^v}(\d x) \d v $, $t \in \N,$ follows the dynamics \eqref{eq : dynamics_lifted_weak_formulation}. Moreover, using the definition of $\hat{f}$ in \eqref{eq : hatf_weak_lifted_weak_formulation}, we show easily by taking the conditional expectation with respect to $(\epsilon^0_t)_{t \in \N^{\star}}$ in \eqref{eq : cost functional V weak formulation} and following \eqref{eq : equivalent_formulation_expected_gain} that 
\begin{align}\label{eq : contrlweakhatV}
    V_W^{\alpha}(\xi) = V^{\balpha}_S(\boldsymbol{\xi})= \hat{V}^{\balpha^{R}}(\mu), \quad \text{where } \mu(\d u , \d x ) = \P_{(U,\xi)}= \P_{\xi^u}(\d x) \d u \in \Pc_{\lambda}(I \times \Xc),
\end{align}
This clearly shows that $V_W(\xi) \leq \hat{V}(\mu)$ and $V_S(\boldsymbol{\xi}) \leq \hat{V}(\mu)$ for $\mu = \P_{{(U,\xi)}}=\P_{\xi^u}(\d x) \d u  \in \Pc_{\lambda}(I \times \Xc)$ as a consequence of the space enlargement of the control space.

The goal is now to prove the reverse inequality  and to characterize at the same time the value functions $V_W$ and $V_S$, i.e the solution to the CNEMF-MDP problem through a Bellman fixed point equation.

\subsubsection{Bellman fixed point on $\Pc_{\lambda}(I \times \Xc)$}

We now study the Bellman equation corresponding to the lifted MDP \eqref{eq : dynamics canonical formulation}-\eqref{eq : value function canonical formulation} on $\Pc_{\lambda}(I \times \Xc)$ by working on the canonical space $(E^0)^{\N}$. We denote the canonical identity function on $(E^0)^{\N}$ as $\epsilon^0$ and by $\epsilon_t^0$ its $t-$th projection in $(E^0)^{\N}$, ie the map defined as $\epsilon_t^0(\omega) =\omega_t$ where $\omega= (\omega_0,\ldots, \omega_{t-1},\omega_t, \omega_{t+1}, \ldots) \in (E^0)^{\N}$. The canonical space $(E^0)^{\N}$ is endowed by the $\sigma$-algebra generated by the canonical projection maps ie $\sigma\big(\epsilon_1^0,\ldots, \epsilon_t^0, \ldots \big)$ and endowed with the measure on $(E^0)^{\N}$ constructed from the law of $\epsilon_1^0$. An open-loop control $\bnu \in \bAc$ as a sequence of $(\bnu_t)_{t \in \N}$ where $\bnu_t$ is a measurable map from $(E^0)^t$ to $\bA$ with convention that $\bnu_0$ is a constant.  Given $\boldsymbol{\nu} \in \bAc$ and $e^0 \in E^0$, we define $\vec{\bnu}^{e^0} := (\vec{\bnu}^{e^0}_t)_{t \in \N} \in \bAc$, where $\vec{\bnu}_t^{e^0}(\cdot) := \bnu_{t+1}(e^0,\cdot), t \in \N$. We also introduce the shifted operator $\theta$ on $(E^0)^{\N}$ as the map $\theta((\epsilon^0_t)_{t \in \N}) = (\epsilon_{t+1}^0)_{t \in \N}$. Given $\mu \in \Pc_{\lambda}(I \times \Xc)$ and $\bnu \in \bAc$, we denote by $(\mu_t^{\mu,\bnu})_{t \in \N}$ the solution to \eqref{eq : dynamics canonical formulation} and following the  same arguments in \cite{motte2022mean} (section 4.2), one can show the following  DPP relation
\begin{align}\label{eq : DPP principle}
    \hat{V}^{\bnu}(\mu) =  \hat{f}(\mu,\bnu_0) + \beta \E \big[ \hat{V}^{\vec{\bnu}^{\epsilon_1^0}}(\mu_1^{\mu,\bnu}) \big]
    \end{align}
We then introduce the Bellman operator associated to this non exchangeable control problem $\Tc : L_m^{\infty}(\Pc_{\lambda}(I \times \Xc)) \to L^{\infty}(\Pc_{\lambda}(I \times \Xc))$ defined for any $W \in L^{\infty}_m(\Pc_{\lambda}(I \times \Xc))$ by 
\begin{align}\label{eq : Bellman operator def 1}
    [\Tc W] (\mu) := \underset{\ba \in \bA}{\sup}  \big \lbrace \hat{f}(\mu,\ba) + \beta  \E \big[ W(\hat{F}(\mu, \ba,\epsilon_1^0)] \big] \big \rbrace, \quad \mu \in \Pc_{\lambda}(I \times \Xc).
\end{align}
As mentioned in \cite{motte2022mean}, the supremum in \eqref{eq : Bellman operator def 1} may lead to a non measurable function so it may not be a well defined operator. However,  this issue is handled by showing that $\Tc$ preserves the following space 
\small
\begin{align}
    \Mc := \big \lbrace  W \in L^{\infty}_m(\Pc_{\lambda}(I \times \Xc)) : \big| W(\mu) - W(\mu') \big| \leq 2 L_f \sum_{t =0}^{\infty} \beta^t  \textnormal{min} \big[(2 L_F)^t \Wc(\mu,\mu'), \Delta_{I \times \Xc}  \big], \hspace{0.1 cm} \forall (\mu,\mu') \in \Pc_{\lambda}(I \times \Xc) \big \rbrace.
\end{align}
\normalsize
which defines a complete metric as a closed subspace of the complete metric space $\Big(L^{\infty}_m\big(\Pc_{\lambda}(I \times \Xc)\big); \lVert \cdot \rVert_{\infty} \Big)$. Moreover, there exists a constant $K_{\star}$ (depending on $L_F$, $L_f$, $\beta$ and $\Delta_{I \times \Xc}$) such that  any function $W \in \Mc$ is $\gamma$-Hölder with constant factor $K_{\star}$, i.e
\begin{align}\label{eq : gamma_Holder_property}
    \big | W(\mu) - W(\mu') \big | \leq  K_{\star} \Wc(\mu,\mu')^{\gamma},
\end{align}
where we recall that $\gamma$ has been introduced in Theorem \ref{Thm : convergence value functions}.  Moreover, the Bellman operator $\Tc$ is a $\beta$-contraction on $L^{\infty}_m \big(\Pc_{\lambda}(I \times \Xc) \big)$ and admits a unique fixed point $V^{\star} \in \Mc$, which is  therefore $\gamma$-Hölder.

We can now state a first relation between the value function $\hat{V}$ and the fixed point of the Bellman operator $\Tc$ $V^*$ as follows
\begin{Lemma}\label{lemma : inequality_value_func_fixed_point}
    For any $\mu \in \Pc_{\lambda}(I \times \Xc)$, we have $\hat{V}(\mu) \leq V^*(\mu)$.
\end{Lemma}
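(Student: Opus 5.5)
The plan is to show that for every open-loop control $\bnu\in\bAc$ one has $\hat V^{\bnu}(\mu)\le V^*(\mu)$, and then take the supremum over $\bnu$. Since $V^*$ is the fixed point of $\Tc$ in $\Mc$ and $\Tc$ is a $\beta$-contraction, we have $V^* = \Tc^n V^*$ for every $n$. The natural route is therefore a finite-horizon comparison combined with truncation of the discounted sum, using only the DPP relation \eqref{eq : DPP principle}, the definition \eqref{eq : Bellman operator def 1} of $\Tc$, and the boundedness of $f$.

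First, for a fixed $\bnu\in\bAc$, iterate the DPP relation \eqref{eq : DPP principle} (equivalently, write directly from \eqref{eq : value function canonical formulation}):
\begin{align}
\hat V^{\bnu}(\mu)=\E\Big[\sum_{t=0}^{n-1}\beta^t\hat f(\mu_t,\bnu_t)\Big]+\beta^n\E\big[\hat V^{\bnu^{(n)}}(\mu_n)\big],
\end{align}
where $\mu_t=\mu_t^{\mu,\bnu}$ and $\bnu^{(n)}$ is the $n$-times shifted control. Since $|\hat f|\le\|f\|_\infty$, the remainder is bounded by $\beta^n\|f\|_\infty/(1-\beta)$. Second, prove by backward induction that for any $W\in\Mc$ (in particular $W=V^*$), any $\mu$ and any $\bnu$,
\begin{align}
\E\Big[\sum_{t=0}^{n-1}\beta^t\hat f(\mu_t,\bnu_t)+\beta^nW(\mu_n)\Big]\le[\Tc^nW](\mu).
\end{align}
For $n=1$ this is immediate from \eqref{eq : Bellman operator def 1}: because $\bnu_0$ is deterministic, $\hat f(\mu,\bnu_0)+\beta\E[W(\hat F(\mu,\bnu_0,\epsilon_1^0))]\le\Tc W(\mu)$. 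For the inductive step, condition on $\epsilon_1^0$. On the canonical space, the shifted control $\vec\bnu^{e^0}$ belongs to $\bAc$ for each fixed $e^0$, and the continuation from $\mu_1=\hat F(\mu,\bnu_0,e^0)$ is the controlled flow driven by $\vec\bnu^{e^0}$ (this is the same flow property already used in deriving \eqref{eq : DPP principle}). The induction hypothesis applied pointwise in $e^0$ then bounds the conditional continuation by $[\Tc^{n-1}W](\mu_1)$. Since $\Tc^{n-1}W\in\Mc$ is measurable, we can integrate in $e^0$ and apply the $n=1$ case to $\Tc^{n-1}W$.

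Taking $W=V^*$ and using $\Tc^nV^*=V^*$ gives $\hat V^{\bnu}(\mu)\le V^*(\mu)+2\beta^n\|f\|_\infty/(1-\beta)$, where the extra $\beta^n\|V^*\|_\infty\le\beta^n\|f\|_\infty/(1-\beta)$ accounts for swapping $\hat V^{\bnu^{(n)}}(\mu_n)$ with $V^*(\mu_n)$. Letting $n\to\infty$ and then taking the supremum over $\bnu$ yields the claim.

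The main obstacle is the measurability and conditioning step in the induction. We need $e^0\mapsto(\hat F(\mu,\bnu_0,e^0),\vec\bnu^{e^0})$ to produce admissible objects and the conditional expectation to factor as claimed. This is exactly where the invariance of $\Mc$ under $\Tc$, and hence the measurability of $\Tc^kW$, together with the canonical-space description of $\bAc$ are needed; I would follow the analogous argument in Section 4.2 of \cite{motte2022mean}.
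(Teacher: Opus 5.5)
Your proof is correct, but it takes a different route from the paper's.

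\textbf{The paper's argument.} It is a one-step argument. From the DPP \eqref{eq : DPP principle} and $\Tc V^* = V^*$, it splits $V^*(\mu)-\hat V^{\bnu}(\mu)$ into two pieces. The first is the gap $\Tc V^*(\mu)-\big(\hat f(\mu,\bnu_0)+\beta\E[V^*(\mu_1^{\mu,\bnu})]\big)$, which is nonnegative by definition of $\Tc$. The second is $\beta\E\big[V^*(\mu_1^{\mu,\bnu})-\hat V^{\vec{\bnu}^{\epsilon_1^0}}(\mu_1^{\mu,\bnu})\big]$, which it bounds below by $\beta$ times an infimum. This gives $\Delta\ge\beta\Delta$, hence $\Delta\ge 0$, since $\Delta$ is finite (everything is bounded by $\|f\|_\infty/(1-\beta)$) and $\beta<1$.

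For this to close, $\Delta$ must be the infimum taken jointly over $(\mu,\bnu)\in\Pc_\lambda(I\times\Xc)\times\bAc$. The continuation term involves the shifted control $\vec{\bnu}^{\epsilon_1^0}$, not $\bnu$. The paper writes the infimum over $\mu$ only with $\bnu$ fixed, which is slightly loose as stated.

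\textbf{What your route does differently.} You replace this self-referential infimum trick with an $n$-step finite-horizon comparison plus tail truncation. It costs an induction with a conditioning step at each stage, instead of a single use of the DPP. In return, you quantify over all controls at every step, so the shifted-control issue is handled automatically. You also get an explicit estimate $\hat V^{\bnu}(\mu)\le V^*(\mu)+C\beta^n$.

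\textbf{Two simplifications.} First, since you only use $W=V^*$, you do not need invariance of $\Mc$ for general $W$. You have $\Tc^kV^*=V^*$, which is already measurable, so the induction can be stated directly for $V^*$. Second, the first step does not require iterating the DPP. The tail $\E\big[\sum_{t\ge n}\beta^t\hat f(\mu_t,\bnu_t)\big]$ can be bounded directly from \eqref{eq : value function canonical formulation}, which also removes the need to swap $\hat V^{\bnu^{(n)}}$ with $V^*$.
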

\begin{proof}
    Let $\bnu \in \bAc$. Then, from the dynamic programming equation \eqref{eq : DPP principle} and since $\Tc V^* = V^*$, we have
    \begin{align}
        \underset{\mu \in \Pc_{\lambda}(I \times \Xc)}{\inf} \big (V^*(\mu) - \hat{V}^{\bnu}(\mu) \big)
        \geq &\underset{\mu \in \Pc_{\lambda}(I \times \Xc)}{\inf} \Big( \Tc V^*(\mu)  - \big( \tilde{f}(\mu,\bnu_0) + \beta \E \big[V^*(\mu_1^{\mu,\bnu}) \big] \big) + \beta \E \big[V^*(\mu_1^{\mu,\bnu} ) - \hat{V}^{\vec{\nu}^{\epsilon_1^0}}(\mu_1^{\mu,\bnu}) \big] \Big) \notag \\
        \geq &\beta  \underset{\mu \in \Pc_{\lambda}(I \times \Xc)}{\inf} (V^*(\mu) - \hat{V}^{\bnu}(\mu) \big)
    \end{align}
    where in the second inequality we used the definition of $\Tc$ 
    which shows that $\hat{V}^{\bnu}(\mu) \leq V^*(\mu), \quad \forall \mu \in \Pc_{\lambda}(I \times \Xc)$.
    Taking the supremum over $\bnu \in \bAc$ gives the required result.
\end{proof}
We now give some other representations of the operator $\Tc$. To this extend, we introduce the map $\bar{F} : \Pc_{\lambda}(I \times \Xc) \times L^0\big(I \times \Xc; \Pc(A) \big) \times E^0 \to \Pc_{\lambda}(I \times \Xc)$ as
\begin{align}
    \bar{F}(\mu, \hat{a}, e^0) = \tilde{F}(\cdot,\cdot,\cdot, \mu  \hat{a}, \cdot, e^0) \sharp \big( \mu  \hat{a} \otimes \lambda_{\epsilon} \big), \quad (\mu,\hat{a},e^0) \in \Pc_{\lambda}(I \times \Xc) \times L^0(I \times \Xc ; \Pc(A) \big) \times E^0,
\end{align}
where we recall that $\tilde{F}$ has been introduced in \eqref{eq : definition tilde F}. We also introduced the map $\bar{f} :  \Pc_{\lambda}(I \times \Xc) \times L^0(I \times \Xc ; \Pc(A)) \to \R $ as
\begin{align}
\bar{f}(\mu,\hat{a}) = \int_{I \times \Xc \times A} f(u,x,a, \mu  \hat{a}) (\mu  \hat{a})(\d u , \d x , \d a ).
\end{align}

\begin{Proposition}\label{prop : Bellman equation}
    For any $W \in L^{\infty}_m \big(\Pc_{\lambda}(I \times \Xc)\big)$ and $\mu \in \Pc_{\lambda}(I \times \Xc)$, we have
    \begin{align}\label{eq : bellman_equations}
        \big[ \Tc W \big](\mu) = \underset{\hat{a} \in L^0(I \times \Xc ; \Pc(A))}{\sup} \big[ \bar{\Tc}^{\hat{a}}W \big](\mu)= \underset{\mathrm{a} \in L^0( I \times \Xc \times [0,1];A)}{\sup} \big[ \T^{\mathrm{a}} W \big](\mu),
    \end{align}
where $\bar{\Tc}^{\hat{a}}$ and $\T^{\mathrm{a}}$  are operators defined on $L^{\infty}\big(\Pc_{\lambda}(I \times \Xc)\big)$ by 
\begin{align}\label{eq : definition Bellman operator before sup}
\begin{cases}
    \big[ \bar{\Tc}^{\hat{a}}W](\mu) &:= \bar{f}(\mu,\hat{a}) + \beta  \E \big[ W(\bar{F}(\mu,\hat{a},\epsilon_1^0)) \big]  \\
    \big[ \T^{\mathrm{a}} W \big](\mu) &:=  \E \Big[f((U,\xi), \mathrm{a}((U,\xi),Z), \P_{\big( (U,\xi),\mathrm{a}((U,\xi),Z) \big)} + \beta  W \big( \P^0_{\tilde{F}((U,\xi),\mathrm{a}((U,\xi),Z),\P_{(\xi,\mathrm{a}((U,\xi),Z)},\epsilon_1,\epsilon_1^0)}\big) \Big]  
\end{cases}
\end{align}
for any $\big((U,\xi),Z\big) \sim \mu \otimes \Uc([0,1])$. Moreover, we have
\begin{align}\label{eq : bellman operator form 4}
    \big[ \Tc W \big](\mu) = \underset{\alpha_0 \in L^0( \Omega;A)}{\sup} \E \Big[ f \big( (U,\xi),\alpha_0 , \P_{((U,\xi),\alpha_0)}\big) + \beta W \big(\P^0_{\tilde{F}((U,\xi),\alpha_0, \P_{((U,\xi),\alpha_0)},\epsilon_1,\epsilon_1^0)}\big) \Big].
\end{align}
\end{Proposition}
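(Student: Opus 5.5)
We establish a cycle of inequalities between the four expressions: $\Tc W$ (sup over $\ba\in\bA$), the sup over kernels $\hat a\in L^0(I\times\Xc;\Pc(A))$, the sup over randomized maps $\mathrm{a}\in L^0(I\times\Xc\times[0,1];A)$, and the sup over $\alpha_0\in L^0(\Omega;A)$ in \eqref{eq : bellman operator form 4}. We fix $\mu\in\Pc_\lambda(I\times\Xc)$ and $((U,\xi),Z)\sim\mu\otimes\Uc([0,1])$ on a probability space supporting independent $\epsilon_1\sim\lambda_\epsilon$ and $\epsilon_1^0$.

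\textbf{Step 1: the kernel form is the bracket of $\Tc$.} Given $\ba\in\bA$, the measure $\bp(\mu,\ba)$ has first two marginals equal to $\mu$. By the disintegration theorem on the Polish space $I\times\Xc\times A$ there is a kernel $\hat a\in L^0(I\times\Xc;\Pc(A))$ with $\bp(\mu,\ba)=\mu\hat a$. Comparing \eqref{eq : definition hat F}, \eqref{eq : hatf_weak_lifted_weak_formulation} with the definitions of $\bar F,\bar f$ gives $\hat f(\mu,\ba)=\bar f(\mu,\hat a)$ and $\hat F(\mu,\ba,e^0)=\bar F(\mu,\hat a,e^0)$. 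Hence $\Tc W(\mu)\le\sup_{\hat a}\bar\Tc^{\hat a}W(\mu)$. Conversely, given $\hat a$, set $\ba:=\mu\hat a\in\bA$. Then $\pr_{12}\sharp\ba=\mu$, so $\bp(\mu,\ba)=\ba$ by the projection property of \eqref{eq : coupling_projection}, and the brackets coincide. Thus the first equality holds.

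\textbf{Step 2: kernels versus randomized maps.} Given $\hat a$, a measurable randomization lemma (e.g.\ Lemma 2.22 in Kallenberg, applied with parameter space $I\times\Xc$) yields a Borel $\mathrm{a}:I\times\Xc\times[0,1]\to A$ with $\mathrm{a}(u,x,Z)\sim\hat a(u,x)$ for every $(u,x)$. Then $\P_{((U,\xi),\mathrm{a}((U,\xi),Z))}=\mu\hat a$. Conversely, any $\mathrm{a}$ induces the kernel $\hat a(u,x):=\mathrm{a}(u,x,\cdot)\sharp\Uc([0,1])$.

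It remains to check $\T^{\mathrm a}W(\mu)=\bar\Tc^{\hat a}W(\mu)$. The reward part is immediate. For the transition part, $(U,\xi,\alpha_0,\epsilon_1)$ with $\alpha_0:=\mathrm{a}((U,\xi),Z)$ is independent of $\epsilon^0_1$. The measure argument $\mu\hat a$ is deterministic, and $\tilde F$ keeps the label coordinate. Therefore
\[
\P^0_{\tilde F(\cdots,\epsilon_1^0)}=\tilde F(\cdot,\cdot,\cdot,\mu\hat a,\cdot,\epsilon^0_1)\sharp(\mu\hat a\otimes\lambda_\epsilon)=\bar F(\mu,\hat a,\epsilon^0_1)\quad\text{a.s.},
\]
which is the freezing lemma for conditional laws. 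Taking expectations gives equality of the two brackets, and the second equality in \eqref{eq : bellman_equations} follows.

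\textbf{Step 3: formula \eqref{eq : bellman operator form 4}.} The inequality ``$\ge$'' follows by choosing $\alpha_0=\mathrm{a}((U,\xi),Z)$. For ``$\le$'', take any $A$-valued $\alpha_0$ on $\Omega$, with $\Omega$ carrying $(U,\xi)$ and the independent noises (we assume $\alpha_0$ is independent of $(\epsilon_1,\epsilon^0_1)$, as for a time-$0$ control). Let $\hat a$ be a disintegration of $\P_{((U,\xi),\alpha_0)}$ with respect to $\mu$. The integrand depends on $\alpha_0$ only through the joint law of $((U,\xi),\alpha_0,\epsilon_1,\epsilon_1^0)$, which equals $\mu\hat a\otimes\lambda_\epsilon\otimes\P_{\epsilon^0_1}$. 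The same computation as in Step 2 then gives the value $\bar\Tc^{\hat a}W(\mu)$.

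\textbf{Main obstacle.} The delicate points are measurability. First, one must pick a jointly measurable randomization $\mathrm{a}$ from the kernel $\hat a$, which I would take from the standard lemma. Second, one must justify that $e^0\mapsto\bar F(\mu,\hat a,e^0)$ is measurable, so that $W(\bar F(\cdot))$ is a random variable. This follows from the Borel measurability of $F$, since pushforwards of a fixed measure under a jointly measurable map depend measurably on the parameter. Beyond these, the argument reduces to the identity $\bp(\mu,\mu\hat a)=\mu\hat a$.
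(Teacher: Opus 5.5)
Your proposal is correct and follows essentially the same route as the paper, which defers to Proposition 4.1 of \cite{motte2022mean}: the marginal constraint $\pr_{12}\sharp\bp(\mu,\ba)=\mu$ gives the disintegration $\bp(\mu,\ba)=\mu\hat a$, the converse uses $\ba=\mu\hat a$ with the projection property of $\bp$, and kernels become randomized maps through Kallenberg's randomization lemma together with the freezing property of $\P^0$. The one point to tighten is in Step 3: for the joint law to factor as $\mu\hat a\otimes\lambda_\epsilon\otimes\P_{\epsilon_1^0}$, you need $((U,\xi),\alpha_0)$ to be jointly independent of $(\epsilon_1,\epsilon_1^0)$ (e.g.\ $\alpha_0$ measurable with respect to $\sigma(U,\Gamma)$), not only $\alpha_0$ by itself.
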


\begin{proof}
The proof of this  result is a simple extension of the one presented in Proposition 4.1 in \cite{motte2022mean} so we omit it.
\end{proof}
\begin{Remark}
\normalfont
    The proof of Proposition \ref{prop : Bellman equation} essentially follows from the marginal constraint of $\bp(\mu,\ba)$ which allows the construction of a kernel map $\hat{a} \in L^0\big(I \times \Xc ; \Pc(A) \big)$ satisfying $\bp(\mu,\ba) = \mu  \hat{a}$. Then, the construction of a suitable random variable follows from the randomization lemma in \cite{kallenberg2002foundations}.
\end{Remark}

\subsubsection{The subclass of optimal randomized feedback controls}\label{subsec : optimal_randomized_feedback}
In order to show that $\hat{V} = V^*$, i.e the value function $\hat{V}$ of the general lifted MDP on $\Pc_{\lambda}(I \times \Xc)$ satisfies the Bellman equation $\hat{V}=\Tc \hat{V}$ and the existence of an optimal control for $\hat{V}$, we will consider a suitable subclass of controls. To this extend, for any $\bpi \in L^0(\Pc_{\lambda}(I \times \Xc);\bA)$, called feedback policy, we introduce the associated Bellman operator $\Tc^{\bpi}$ on $L^{\infty}(\Pc_{\lambda}(I \times \Xc))$, defined for any $W \in L^{\infty}(\Pc_{\lambda}(I \times \Xc))$ by
\begin{align}\label{eq : bellmanoperatorfeedbackpolicies}
    \big[ \Tc^{\bpi} W](\mu) := \hat{f}(\mu,\bpi(\mu)) + \beta \E \big[ W( \hat{F} \big(\mu, \bpi(\mu), \epsilon_1^0) \big) \big], \quad \mu \in \Pc_{\lambda}(I \times \Xc).
\end{align}
We recall that  for any $\bpi \in L^0( \Pc_{\lambda}(I \times \Xc);\bA)$, the operator $\Tc^{\bpi}$ is a $\beta$-contraction on $L^{\infty}(\Pc_{\lambda}(I \times \Xc))$  and admits a unique fixed point $\hat{V}^{\bpi}$. Moreover, from standard theory on MDP, the unique fixed point $\hat{V}^{\bpi}$ associated to the feedback operator $\Tc^{\bpi}$ is equal to  
\begin{align}\label{eq : fixed_point_feedback_operator}
    \hat{V}^{\bpi}(\mu) = \E \Big[ \sum_{t \in \N} \beta^t \hat{f}(\mu_t, \bpi(\mu_t) ) \Big],
\end{align}
where $(\mu_t)_{t \in \N}$ is the MDP from \eqref{eq : dynamics canonical formulation}  with the feedback control control $\bnu^{\bpi} = (\bnu^{\bpi}_t)_{t \in \N} \in \bAc$ defined for any $t \in \N$ by $\bnu_t^{\bpi} = \bpi(\mu_t)$. In the following, we identify $\hat{V}^{\mathfrak{\bpi}}$ with $\hat{V}^{\bnu^{\bpi}}$ as defined in \eqref{eq : value function canonical formulation}.  

In order to solve the CNEMF-MDP, we introduce the subclass of lifted feedback randomized policies.

\begin{Definition}\label{def : lifted randomized feedback policy}
    A feedback policy $\bpi  \in L^0(\Pc_{\lambda}(I \times \Xc);\bA)$ is a lifted randomized feedback policy if there exists a measurable function $\mathfrak{a} \in L^0( \Pc_{\lambda}(I \times \Xc) \times I \times \Xc \times [0,1],A)$ called randomized feedback policy such that $(U,\xi, \mathfrak{a}(\mu,(U,\xi),Z)) \sim \bpi(\mu)$, for all $\mu \in \Pc_{\lambda}(I \times \Xc)$, with $((U,\xi),Z) \sim \mu \otimes \Uc([0,1])$.   
\end{Definition}
\begin{Remark}\label{rmk ; optimal_control_representation_weak_strong}
\normalfont
    Given $\mathfrak{a} \in L^0(\Pc_{\lambda}(I \times \Xc) \times I \times \Xc \times [0,1];A)$,  we denote by $\bpi^{\mathfrak{a}} \in L^0(\Pc_{\lambda}(I \times \Xc);\bA)$ the associated lifted randomized feedback policy, ie $\bpi^{\mathfrak{a}}(\mu) = \P_{(U,\xi, \mathfrak{a}(\mu,(U,\xi),Z))}$, for $\mu \in \Pc_{\lambda}(I \times \Xc)$ and $((U,\xi) , Z) \sim \mu \otimes \Uc([0,1])$. By definition of the Bellman operator $\Tc^{\bpi^{\mathfrak{a}}}$ in \eqref{eq : bellmanoperatorfeedbackpolicies} and since $\text{pr}_{12} \sharp \bpi^{\mathfrak{a}}(\mu) = \mu$, we have $p(\mu,\bpi^{\mathfrak{a}}(\mu)) = \bpi^{\mathfrak{a}}(\mu)$ and recalling \eqref{eq : definition Bellman operator before sup}, we see after defining the map $\mathfrak{a}^{\mu} = \mathfrak{a}(\mu, \cdot,\cdot , \cdot) \in L^0(I \times \Xc \times [0,1];A)$, that for any $W \in L^{\infty}(\Pc_{\lambda}(I \times \Xc))$
    \begin{align}\label{eq : bellman_operator_equalities_feedback_policies}
        \big[ \Tc^{\bpi^{\mathfrak{a}}} W](\mu) =  \big[ \T^{\mathfrak{a}^{\mu}} W \big](\mu), \quad \mu \in \Pc_{\lambda}(I \times \Xc).
    \end{align}
    Let $\boldsymbol{\xi} :=(\xi^u)_{u \in I}$ and $\xi$ satisfying the Assumptions of Proposition \ref{lemma: equivalence_between_weak_strong}. We then define $\alpha_W^{\mathfrak{a}} =(\alpha_t^{\mathfrak{a}})_{t \in \N} \in \Ac^{W}$ and $\balpha_S^{\mathfrak{a}} = (\alpha_t^u)^{\mathfrak{a}}_{t \in \N, u \in I} \in \Ac^S$ as for $\lambda-\text{a.e}$ $u \in I$
    \begin{align}\label{eq : randomized_feedback_controls_strong_weak}
    \begin{cases}
        (\alpha_t^u)^{\mathfrak{a}} &= \mathfrak{a}(\P^0_{X_t^v}(\d x) \d v , u, X_t^u, Z_t^u), \\
        \alpha_t^{\mathfrak{a}} &= \mathfrak{a}(\P^0_{(U,X_t)}, U,X_t, Z_t).
    \end{cases}
    \end{align}
    By construction, recalling the definitions of the operators $\Lc_{\xi}^0$ and $\Lc_{\boldsymbol{\xi}}^0$ respectively in \eqref{eq : operators Lxi0} and \eqref{eq : Lxi mapping_strong_formulation} and \eqref{eq : contrlweakhatV}, we can define  the associated lifted control $\balpha^{R,\mathfrak{a}} := \Lc_{\xi}^0( \alpha^{\mathfrak{a}}_{W}) = \Lc_{\boldsymbol{\xi}}^0(\balpha^{\mathfrak{a}}_S) $ which satisfies by construction $\balpha_t^{R,\mathfrak{a}} = \P^0_{(U,X_t, \alpha_t^{\mathfrak{a}})} = \bpi^{\mathfrak{a}}(\P^0_{(U,X_t)})$ for any  $t \in \N$. Following \eqref{eq : fixed_point_feedback_operator}, Equation \eqref{eq : value function canonical formulation} and Proposition \ref{lemma: equivalence_between_weak_strong}, we have $V_W^{\alpha^{\mathfrak{a}}_W}(\xi) = V^{\balpha^{\mathfrak{a}}_S}_{S}(\boldsymbol{\xi}) = \hat{V}^{\bnu^{\bpi^{\mathfrak{a}}}}(\mu)$, where $\mu = \P_{(U,\xi)} = \P_{\xi^u}(\d x) \d u $.
\end{Remark}

We now show a verification theorem type result for the general lifted MDP on $\Pc_{\lambda}(I \times \Xc)$ and hence for the CNEMF-MDP.
\begin{Proposition}\label{proposition : verification theorem}\textnormal{(Verification result).}
    Let $\epsilon \geq 0$. Suppose that there exists an $\epsilon$-optimal feedback policy $\bpi_{\epsilon} \in L^0(\Pc_{\lambda}(I \times \Xc);\bA)$ for $V^*$, i.e for any $\mu \in \Pc_{\lambda}(I \times \Xc)$
    \begin{align}\label{eq : verification result feedback policy}
       V^*(\mu) \leq \Tc^{\bpi_{\epsilon}}V^*(\mu) + \epsilon  
    \end{align}
    Then,  $ \big(\bnu_t^{\mathfrak{\pi}_{\epsilon}} =\bpi_{\epsilon}(\mu_t) \big)_{t \in \N} \in \bAc$ is $\frac{\epsilon}{1-\beta}$ optimal for $\hat{V}$ ie $\hat{V}^{\bpi_{\epsilon}} \geq \hat{V} - \frac{\epsilon}{1 - \beta}$ and we have $\hat{V} \geq V^* - \frac{\epsilon}{1- \beta}$. Furthermore, if $\bpi_{\epsilon}$ is a lifted randomized feedback policy according to Definition \ref{def : lifted randomized feedback policy} for a measurable function $\mathfrak{a}^{\epsilon} \in L^0(\Pc_{\lambda}(I \times \Xc) \times I \times \Xc \times [0,1];A)$, then recalling the notations introduced in \eqref{eq : randomized_feedback_controls_strong_weak} 
    \begin{enumerate}
        \item [(1)] $\balpha^{\mathfrak{a}_{\epsilon}}_S \in \Ac^S$ and $\alpha^{\mathfrak{a}_{\epsilon}}_{W} \in \Ac^W$ are respectively $\frac{\epsilon}{1- \beta}$ optimal for $V_S(\boldsymbol{\xi})$ and $V_W(\xi)$, i.e, we have $V_S^{\balpha^{\mathfrak{a}_{\epsilon}}_S}(\boldsymbol{\xi}) \geq V_S(\boldsymbol{\xi}) - \frac{\epsilon}{1- \beta}$ and $V_W^{\alpha^{\mathfrak{a}_{\epsilon}}_{W}}(\xi) \geq V_W(\xi) - \frac{\epsilon}{1- \beta}$
        \item [(2)]  Moreover, we have $V_S(\boldsymbol{\xi}) \geq V^{\star}(\mu) - \frac{ 2 \epsilon}{1-\beta}$ and $V_W(\xi) \geq V^{\star}(\mu) - \frac{2 \epsilon}{1- \beta}$, where $\mu = \P_{(U,\xi)} = \P_{\xi^u}(\d x) \d u$.
    \end{enumerate}
\end{Proposition}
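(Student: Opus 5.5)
The argument is the standard MDP verification argument, adapted to the lifted problem on $\Pc_{\lambda}(I \times \Xc)$, followed by a transfer to the strong and weak formulations via Remark \ref{rmk ; optimal_control_representation_weak_strong}.

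\textbf{Step 1: $\hat V^{\bpi_\epsilon}$ is close to $V^*$.} By \eqref{eq : fixed_point_feedback_operator}, $\hat V^{\bpi_\epsilon}$ is the unique fixed point of the $\beta$-contraction $\Tc^{\bpi_\epsilon}$. The operator $\Tc^{\bpi_\epsilon}$ is monotone and commutes with constants up to the factor $\beta$, i.e. $\Tc^{\bpi_\epsilon}(W+c) = \Tc^{\bpi_\epsilon}W + \beta c$. Iterating \eqref{eq : verification result feedback policy} gives
\begin{align}
V^* \leq \Tc^{\bpi_\epsilon}V^* + \epsilon \leq (\Tc^{\bpi_\epsilon})^2 V^* + \epsilon(1+\beta) \leq \cdots \leq (\Tc^{\bpi_\epsilon})^n V^* + \epsilon \sum_{k<n}\beta^k .
\end{align}
Letting $n\to\infty$, we have $(\Tc^{\bpi_\epsilon})^n V^* \to \hat V^{\bpi_\epsilon}$ in sup norm, so $\hat V^{\bpi_\epsilon} \geq V^* - \frac{\epsilon}{1-\beta}$.

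\textbf{Step 2: first chain of inequalities.} The feedback control $\bnu^{\bpi_\epsilon}$ lies in $\bAc$. Combining this with Lemma \ref{lemma : inequality_value_func_fixed_point} gives
\begin{align}
V^* - \tfrac{\epsilon}{1-\beta} \leq \hat V^{\bpi_\epsilon} \leq \hat V \leq V^* .
\end{align}
This yields both $\hat V^{\bpi_\epsilon} \geq \hat V - \frac{\epsilon}{1-\beta}$ and $\hat V \geq V^* - \frac{\epsilon}{1-\beta}$.

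\textbf{Step 3: transfer to the strong and weak formulations.} Now suppose $\bpi_\epsilon = \bpi^{\mathfrak{a}_\epsilon}$ is a lifted randomized feedback policy. Build $\balpha_S^{\mathfrak{a}_\epsilon}$ and $\alpha_W^{\mathfrak{a}_\epsilon}$ as in \eqref{eq : randomized_feedback_controls_strong_weak}.

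First we check admissibility. The randomization hypothesis (Remark \ref{rmk : randomization_uniform_decimals}) provides the uniforms $Z_t^u$ (resp. $Z_t$) as measurable functions of $\Gamma^u$ (resp. $\Gamma$). An induction like Lemma \ref{lemma : measurability of mapping} shows that $\P^0_{(U,X_t)}$ is a measurable function of $(\epsilon^0_s)_{s\le t}$, and that $X_t$ is a Borel function of $(U,\Gamma,(\epsilon_s)_{s \le t},(\epsilon^0_s)_{s \le t})$. Hence the controls take the open-loop form required in $\Ac^S$ and $\Ac^W$, with a common policy $\mra_t$.

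Remark \ref{rmk ; optimal_control_representation_weak_strong} then gives
\begin{align}
V_W^{\alpha_W^{\mathfrak{a}_\epsilon}}(\xi) = V_S^{\balpha_S^{\mathfrak{a}_\epsilon}}(\boldsymbol{\xi}) = \hat V^{\bpi_\epsilon}(\mu).
\end{align}
By \eqref{eq : contrlweakhatV} we have $V_S(\boldsymbol{\xi}), V_W(\xi) \leq \hat V(\mu)$. Together with Step 2 this gives (1):
\begin{align}
V_S^{\balpha_S^{\mathfrak{a}_\epsilon}}(\boldsymbol{\xi}) \geq \hat V(\mu) - \tfrac{\epsilon}{1-\beta} \geq V_S(\boldsymbol{\xi}) - \tfrac{\epsilon}{1-\beta},
\end{align}
and likewise for $V_W$.

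For (2), write $V_S(\boldsymbol{\xi}) \geq V_S^{\balpha_S^{\mathfrak{a}_\epsilon}}(\boldsymbol{\xi}) = \hat V^{\bpi_\epsilon}(\mu) \geq V^*(\mu) - \frac{\epsilon}{1-\beta}$, which is stronger than the claimed $\frac{2\epsilon}{1-\beta}$. The stated bound can also be reached by chaining through $\hat V$ and (1).

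\textbf{Main obstacle.} The delicate point is the identification in Step 3: that the lifted control generated by $\alpha_W^{\mathfrak{a}_\epsilon}$ equals $\bpi^{\mathfrak{a}_\epsilon}(\P^0_{(U,X_t)})$, and that the resulting $\mu_t$ solves \eqref{eq : dynamics canonical formulation} with feedback $\bpi_\epsilon$. I would prove this by induction on $t$. At each step, conditionally on $\Fc^0_t$, the variable $Z_t$ is uniform and independent of $(U,X_t)$, so
\begin{align}
\P^0_{(U,X_t,\alpha_t)} = \bpi^{\mathfrak{a}_\epsilon}(\mu_t).
\end{align}
Since $\bp(\mu_t,\bpi^{\mathfrak{a}_\epsilon}(\mu_t)) = \bpi^{\mathfrak{a}_\epsilon}(\mu_t)$, \eqref{eq : dynamics of mu} then coincides with $\hat F$. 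The strong case follows from Proposition \ref{lemma: equivalence_between_weak_strong}.
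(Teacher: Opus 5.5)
Your proposal is correct and follows essentially the same route as the paper. You obtain $\hat V^{\bpi_\epsilon} \geq V^* - \frac{\epsilon}{1-\beta}$ from the $\beta$-contraction and fixed-point property of $\Tc^{\bpi_\epsilon}$ (your monotone iteration is just an unrolled form of the paper's contraction estimate), sandwich it with $\hat V^{\bpi_\epsilon} \leq \hat V \leq V^*$ via Lemma \ref{lemma : inequality_value_func_fixed_point}, and transfer to the strong and weak problems through Remark \ref{rmk ; optimal_control_representation_weak_strong} and $V_S, V_W \leq \hat V$. Your observation that (2) in fact holds with $\frac{\epsilon}{1-\beta}$ rather than $\frac{2\epsilon}{1-\beta}$ is also right, since the paper's chain passes through $\hat V$ unnecessarily.
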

\begin{proof}
   The first part of the proof follows similar argument as in Proposition 4.3 in \cite{motte2022mean}. Indeed, we use $\Tc^{\pi_{\epsilon}}\hat{V}^{\mathfrak{\pi}_{\epsilon}} = \hat{V}^{\pi_{\epsilon}}$, where $\hat{V}^{\pi_{\epsilon}}$ is defined in \eqref{eq : fixed_point_feedback_operator}, that $\Tc^{\mathfrak{\pi}_{\epsilon}}$ is a $\beta$-contraction on $L^{\infty}(\Pc_{\lambda}(I\times \Xc))$ and that $V^{\star} \geq \hat{V} \geq \hat{V}^{\mathfrak{\pi}_{\epsilon}}$ where the first inequality follows from Lemma \ref{lemma : inequality_value_func_fixed_point}. 
   If we assume furthermore that $\bpi_{\epsilon} := \bpi^{\mathfrak{a}_{\epsilon}}$ in the sense introduced in Remark \ref{rmk ; optimal_control_representation_weak_strong}, then by Remark \ref{rmk ; optimal_control_representation_weak_strong}, we have $V_W^{\alpha^{\mathfrak{a}_{\epsilon}}_W}(\xi) = V^{\balpha^{\mathfrak{a}_{\epsilon}}_S}_{S}(\boldsymbol{\xi}) = \hat{V}^{\bnu^{\bpi^{\mathfrak{a}_{\epsilon}}}}(\mu)$, where $\mu = \P_{(U,\xi)} = \P_{\xi^u}(\d x) \d u $. 
   Now, $(1)$ follow recalling the inequalities  $V_S(\boldsymbol{\xi})\leq \hat{V}(\mu)$ and $V_W(\xi) \leq \hat{V}(\mu)$ combined with the first inequality of the first assertion. 
   $(2)$ follows from $V_S(\boldsymbol{\xi}) \geq  V^{\balpha^{\mathfrak{a}_{\epsilon}}_S}_{S}(\boldsymbol{\xi}) =\hat{V}^{\bnu^{\bpi^{\mathfrak{a}_{\epsilon}}}}(\mu) \geq \hat{V}(\mu) - \frac{\epsilon}{1 - \beta} \geq V^{\star}(\mu) - \frac{2 \epsilon}{1- \beta}$ where we used the first and second inequalities of the first assertion. Sames computations hold for $V_W(\xi)$ which is enough to end the proof.
\end{proof}

\vspace{0.2 cm}

In order to prove the first point of Theorem \ref{thm: equivalence_value_functions_weak_strong} and according to the verification result in Proposition \ref{proposition : verification theorem}, it  is therefore sufficient to construct for any $\epsilon  >0$,  an  $\epsilon$-optimal lifted randomized feedback policy $\bpi_{\epsilon}$. We notice first that given $\mu \in \Pc_{\lambda}(I \times \Xc)$ and from Proposition \ref{prop : Bellman equation}, we can extract $\mathrm{a}_{\epsilon}^{\mu} \in L^0(I \times \Xc \times [0,1] ; A)$ such that 
\begin{align}
    V^{\star}(\mu) \leq \big[ \T^{\mathrm{a}_{\epsilon}^{\mu}}V^{\star}](\mu) + \epsilon.
\end{align}
The difficulty is to ensure that the map $(\mu,u,x,v) \mapsto \mathfrak{a}_{\epsilon}(\mu,u,x,v) = \mathrm{a}_{\epsilon}^{\mu}(u,x,v)$ is measurable so we can define its associated lifted randomized feedback policy $\bpi^{\mathfrak{a}_{\epsilon}}$ which helds an $\epsilon$-optimal feedback policy for $V^{\star}$.
The main idea of the proof in \cite{motte2022mean} is to rely on a suitable discretization of the space $\Pc_{\lambda}(I \times \Xc)$. In fact, it it known that under the topology of the Wasserstein distance and since $I \times \Xc$ is a compact set, the space $\Pc_{\lambda}(I \times \Xc)$ is compact as a closed subset of the compact set $\Pc(I \times \Xc)$.

\begin{Remark}\textnormal{(Choice of the distance on $\Pc_{\lambda}(I \times \Xc)$).}
\normalfont
   We now briefly explain an important point that motivates our choice of working with the standard Wasserstein distance on $\Pc(I \times \Xc)$, and hence on the induced space $\Pc_{\lambda}(I \times \Xc)$. A seemingly natural alternative would be to endow $\Pc_{\lambda}(I \times \Xc)$ with the following distance. For $\mu, \mu' \in \Pc_{\lambda}(I \times \Xc)$, define
\begin{align}
    \tilde{\Wc}(\mu,\mu') 
    := \int_I \Wc(\mu^u, \mu'^u)\,\lambda(\d u),
\end{align}
where $\Wc$ denotes the usual $1$-Wasserstein distance on $\Pc(\Xc)$, and where $(\mu^u)_{u \in I}$ and $(\mu'^u)_{u \in I}$ denote the $\lambda(\d u)$-a.e. uniquely defined families of disintegrated probability measures.

However, it is not clear that the space $\Pc_{\lambda}(I \times \Xc)$ is compact under the topology induced by the distance $\tilde{\Wc}$. In fact, compactness generally fails unless additional regularity assumptions are imposed on the label variable, more precisely on the map $u \mapsto \mu^u$. Since the space $\Pc_{\lambda}(I \times \Xc)$ is too large in this sense, one could instead restrict attention to a regular subset, denoted by $\Pc_{\lambda}^{\mathrm{reg}}(I \times \Xc)$, consisting of probability measures whose disintegrations satisfy suitable regularity conditions ensuring compactness.

Such a restriction would however require verifying that the associated MDP is well posed on the space $\Pc_{\lambda}^{\mathrm{reg}}(I \times \Xc)$. In particular, for each $t \in \N$, one would need to study the regularity of the mapping
\begin{align}
    u \;\longmapsto\; \P^0_{X_{t+1} \mid U=u}
    = F\big(u,\cdot,\cdot,\P^0_{(U,X_t,\alpha_t)},\cdot,\epsilon_{t+1}^0\big) \sharp
    \big( \P^0_{(X_t,\alpha_t)\mid U=u} \otimes \lambda_{\epsilon} \big).
\end{align}
To ensure that, for all $t \in \N$ and $\P$-a.s., the joint law $\P^0_{(U,X_t)}$ belongs to $\Pc_{\lambda}^{\mathrm{reg}}(I \times \Xc)$, one would have to restrict the class of admissible controls by imposing additional regularity with respect to the label variable. Since we do not wish to introduce such constraints on the control policies, we instead assume Lipschitz regularity of the coefficients $F$ and $f$ with respect to the label argument. This assumption allows us to work on the full space $\Pc_{\lambda}(I \times \Xc)$ endowed with the standard Wasserstein topology.

\end{Remark}

We now show similarly adapting the proof in \cite{motte2022mean} the existence of $\epsilon$-optimal randomized feedback policy for the lifted MDP on $\Pc_{\lambda}(I \times \Xc)$ which will ensure  following Proposition \ref{proposition : verification theorem} that $\hat{V}$ and the optimal values of the CNEMF-MDP either for the weak and the strong formulation are solution to the Bellman fixed point equation.

\vspace{0.2cm}

\noindent \textbf{Proof of Theorem \ref{thm: equivalence_value_functions_weak_strong}.}

\noindent 
(i) From Theorem 4.1 in \cite{motte2022mean} relying on the compacity of the set $\Pc_{\lambda}(I \times \Xc)$ when endowed with its usual Wasserstein distance, we obtain that  for all $\epsilon > 0$, there exists a lifted randomized feedback policy $\bpi^{\mathfrak{a}_{\epsilon}}$, for some $\mathfrak{a}_{\epsilon} \in L^0(\Pc_{\lambda}(I \times \Xc) \times I \times \Xc \times [0,1];A)$ that is $\epsilon$-optimal for $V^{\star}$ according to \eqref{eq : verification result feedback policy}. Consequently, the controls $\balpha_S^{\mathfrak{a}_{\epsilon}} \in \Ac^S$ and $\alpha^{\mathfrak{a}_{\epsilon}}_{W} \in \Ac^W$ are respectively $\frac{\epsilon}{1- \beta}$ for $V_S(\boldsymbol{\xi})$ and $V_W(\xi)$, and we have $V_S(\boldsymbol{\xi})=V_W(\xi)=\hat{V}(\mu)=V^{\star}(\mu)$, which shows the law invariance property and that $V_S$ and $V_W$ are solution to the Bellman equation.

\vspace{0.1 cm}
\noindent (ii) Let us now prove the second statement of Theorem \ref{thm: equivalence_value_functions_weak_strong} where we will construct an optimal randomized feedback control (i.e a lifted randomized feedback policy $\bpi$ associated to $\epsilon=0$ in the Definition in \eqref{eq : verification result feedback policy})  for the CNEMF-MDP control problem. The proof follows  essentially the same lines as in \cite{motte2023quantitative} (Appendix A) so we refer to the references therein with an adaptation to our current verification theorem in \ref{proposition : verification theorem} for the final construction of weak and strong optimal randomized feedback controls.

\subsection{Proof of Theorem \ref{Thm : convergence value functions}}

In order to prove Theorem \ref{Thm : convergence value functions}, we will follow the same steps as discussed in \cite{motte2023quantitative}. To this extend, we refer  to Appendix \ref{appendix : bellman_equation_N_agents} for the construction of $\Tc_N$ the Bellman operator of the $N$-agent.

\subsubsection{Some preliminaries results}

Given $\ba \in A^N$ and $a \in L^0(I \times \Xc \times [0,1] ; A)$, we aim to quantify how close are $\T_N^{\ba}$ (recalling \eqref{eq : operator Bellman N-agent before sup})  and $\T^{a}$ (recalling \eqref{eq : definition Bellman operator before sup}) when $\ba$ are $a$ are close enough in some sense. Since the operators $\T_N^{\ba}$ and $\T^a$  are respectively defined on $L^{\infty}_m(\Xc^N)$ and $L^{\infty}_m \big(\Pc_{\lambda}(I \times \Xc) \big)$ and in order to compare such objects, we need to introduce for any function $W \in L^{\infty}_m(\Pc_{\lambda}(I \times \Xc))$ its unlifted function $\widetilde{W} \in L^{\infty}_m(\Xc^N)$ defined by
\begin{align}\label{eq : unlifted function}
    \widetilde{W}(\bx) :=  W \big(\mu_N^{\lambda} \big[\bu, \bx \big] \big), \quad \forall \bx \in \Xc^N,
\end{align}
where we  recall that the measure $\mu_N^{\lambda}\big[\bu,\bx\big]$ has been introduced in \eqref{eq : Empirical Measures def}.

We now give the following lemma which says how close the operators $\T_N^{\ba}$ and $\T^a$ when $\ba$ and $a$ are \emph{"close enough"}. 
\begin{Lemma}\label{lemma : difference of operators with a}
    There exists some positive constant C such that  for all $\mathrm{a} \in L^0(I \times \Xc \times [0,1];A) $, $\ba \in A^N$,  $\bx \in \Xc^N$ and $\big( \tilde{\xi}_{\bx} =\big((U,\xi_{\bx}), Z \big) \sim \mu_N^{\lambda} \big[\bu,\bx \big] \otimes \Uc([0,1])$, we have 
    \begin{align}
        \big | \widetilde{\T^{\mathrm{a}} V}(\bx) -  \T_N^{\ba} \widetilde{V}(\bx) \big | \leq  C \Big( M_N^{\gamma} + \Wc \big(\P_{(\tilde{\xi}_{\bx},a(\tilde{\xi}_{\bx},Z))}, \mu_N^{\lambda} \big[\bu,  \bx ,\ba \big] \big)^{\gamma}  + (\epsilon_N^F)^{\gamma} + \epsilon_N^f  \Big)
    \end{align}
\end{Lemma}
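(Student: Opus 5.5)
The plan is to follow the proof of the analogous lemma in \cite{motte2023quantitative}, now on the label--state space $I \times \Xc$. Here $V$ denotes the fixed point $V^\star$ of $\Tc$, which lies in $\Mc$ and is therefore $\gamma$-H\"older with constant $K_\star$ by \eqref{eq : gamma_Holder_property}. First I write both operators explicitly. With $\nu := \P_{(\tilde\xi_{\bx},\mathrm{a}(\tilde\xi_{\bx},Z))}\in\Pc(I\times\Xc\times A)$, we have
\begin{align}
\widetilde{\T^{\mathrm{a}}V}(\bx) = \int f(u,x,a,\nu)\,\nu(\d u,\d x,\d a) + \beta\,\E\big[V\big(\tilde F(\cdot,\cdot,\cdot,\nu,\cdot,\epsilon_1^0)\sharp(\nu\otimes\lambda_\epsilon)\big)\big].
\end{align}
The $N$-agent operator $\T_N^{\ba}\widetilde V(\bx)$ from Appendix \ref{appendix : bellman_equation_N_agents} equals $\frac1N\sum_i f_N(\frac iN,x^i,a^i,\mu_N[\bu,\bx,\ba]) + \beta\E[V(\mu_N^\lambda[\bu,\bX_1])]$, where $X_1^i = F_N(\frac iN,x^i,a^i,\mu_N[\bu,\bx,\ba],\epsilon_1^{i/N},\epsilon_1^0)$. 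I then split the difference into a reward part and a continuation part and insert intermediate terms.

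\textbf{Reward part.} I replace $f_N$ by $f$, at cost $\epsilon_N^f$ by Assumption \ref{assumption : fN to f and FN to F}(i). Next I compare $\frac1N\sum_i f(\frac iN,x^i,a^i,\mu_N[\bu,\bx,\ba])$ with $\int f(\cdot,\nu)\,\d\nu$. Two facts enter here. First, $\Wc(\mu_N[\bu,\bx,\ba],\mu_N^\lambda[\bu,\bx,\ba])\le 1/N$, since each atom is spread over an interval of length $1/N$. Second, $f$ is Lipschitz in $(u,x)$ and in the measure, so $\frac1N\sum_i f(\frac iN,x^i,a^i,\mu)$ differs from $\int f(u,x,a,\mu)\,\mu_N^\lambda[\bu,\bx,\ba]$ by $O(1/N)$. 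The remaining difference is $\big|\int f(\cdot,\nu)\,\d(\nu-\mu_N^\lambda[\bu,\bx,\ba])\big|$ plus a measure-argument term. The latter is bounded by $L_f\Wc(\nu,\mu_N^\lambda[\bu,\bx,\ba])$. The former needs Lipschitz continuity in $a$, which Assumption \ref{assumptions: F and f} does not give for $f$ but Assumption \ref{assumption: regularity_prop_chaos_Fn_fn_F_f} gives for $f_N$. I therefore do the comparison with $f_N$ (Lipschitz in $(x,a)$) and swap to $f$ only at fixed empirical measures, where assumption (i) applies. Since $\Wc\le\Delta$ and $\gamma\le1$, linear terms are dominated by their $\gamma$-power, and $1/N\le CM_N$.

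\textbf{Continuation part.} This is the main obstacle. By the H\"older property, $|\E V(\mu_N^\lambda[\bu,\bX_1]) - \E V(\mu^\star_1)|\le K_\star\,\E[\Wc(\mu_N^\lambda[\bu,\bX_1],\mu_1^\star)]^\gamma$ by Jensen, where $\mu_1^\star := \tilde F(\cdot,\nu,\cdot,\epsilon_1^0)\sharp(\nu\otimes\lambda_\epsilon)$. I introduce the intermediate state $\bar X_1^i := F(\frac iN,x^i,a^i,\mu_N^\lambda[\bu,\bx,\ba],\epsilon_1^{i/N},\epsilon_1^0)$. By Assumption \ref{assumption : fN to f and FN to F}(ii), Assumption \ref{assumption: regularity_prop_chaos_Fn_fn_F_f} (to move from $\mu_N$ to $\mu_N^\lambda$ inside $F_N$) and \eqref{eq : inequality wasserstein empirical measures}, the expected distance between $\mu_N^\lambda[\bu,\bX_1]$ and $\mu_N^\lambda[\bu,\bar\bX_1]$ is at most $\epsilon_N^F + C/N$. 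Next, conditionally on $\epsilon_1^0$, the points $(\frac iN,\bar X_1^i)$ are independent but not identically distributed. Their lifted law averages to $\bar\mu_1 := \tilde F(\cdot,\mu_N^\lambda[\bu,\bx,\ba],\cdot,\epsilon^0_1)\sharp(\mu_N^\lambda[\bu,\bx,\ba]\otimes\lambda_\epsilon)$, up to an $O(1/N)$ label shift controlled by Lipschitz continuity in $u$. I plan to bound $\E^0\Wc(\mu_N^\lambda[\bu,\bar\bX_1],\bar\mu_1)$ by $CM_N$ via the standard observation that independent non-identical samples can be coupled to i.i.d.\ samples from the mixture through a uniform label $U$; equivalently I use the Fournier--Guillin bound for independent samples, with $M_N$ taken on $I\times\Xc\times A$, which dominates the rate on $I\times\Xc$. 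I expect this step to require the most care. Finally, $\E\Wc(\bar\mu_1,\mu_1^\star)\le C\,\Wc(\mu_N^\lambda[\bu,\bx,\ba],\nu)$ by Kantorovich duality \eqref{eq : Kantorovitch duality}, together with the Lipschitz property of $F$ in Assumption \ref{assumptions: F and f}(i) along an optimal coupling.

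To conclude, I collect the terms and use $(x+y)^\gamma\le x^\gamma+y^\gamma$ to obtain the stated bound $C\big(M_N^\gamma + \Wc(\nu,\mu_N^\lambda[\bu,\bx,\ba])^\gamma + (\epsilon_N^F)^\gamma + \epsilon_N^f\big)$.
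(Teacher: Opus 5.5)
Your $F_N\to F$ step is fine; it even handles the $\mu_N$ versus $\mu_N^{\lambda}$ mismatch inside $F_N$ that the paper glosses over. However, the step you flag as delicate, $\E^0\,\Wc(\mu_N^{\lambda}[\bu,\bar{\bX}_1],\bar\mu_1)\le CM_N$, is a genuine gap.

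\textbf{Why the proposed justifications do not give this bound.} Conditionally on $\epsilon_1^0$, the law $\rho_i$ of $\bar X_1^i$ depends on the arbitrary point $(x^i,a^i)$, so $i\mapsto\rho_i$ has no regularity at all. If you couple your one-point-per-block sample with an i.i.d.\ sample from the mixture through uniform labels, on average a fraction $(1-\frac1N)^N\approx e^{-1}$ of the blocks $I_i^N$ receives no partner. That unmatched mass cannot be transported cheaply. A Fournier--Guillin argument for independent, non-identically distributed samples gives an explicit rate, but not a bound by $M_N$, which is defined as a supremum over \emph{i.i.d.}\ samples.

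\textbf{The missing device.} It is what the paper uses: sample in the input space $I\times\Xc\times A$, where $M_N$ lives. Draw i.i.d.\ triples and match them to the grid points $(\frac iN,x^i,a^i)$ with the measurable optimal permutation $\sigma$ of Lemma \ref{lemma : Mesurable Optimal Permutation}. Feed the idiosyncratic noises in permuted order; since $\sigma$ depends only on the triples, these noises stay i.i.d.\ and independent of them. Lipschitz continuity of $F$ then transfers the matching cost $\E\,\Wc(\hat\nu_N,\mu_N[\bu,\bx,\ba])\le M_N+\Wc(\nu,\mu_N^{\lambda}[\bu,\bx,\ba])+\frac{1}{2N}$ to the outputs.
\begin{itemize}
\item With triples drawn from $\nu$, as in the paper, this handles your concentration step and your comparison of $\bar\mu_1$ with $\mu_1^\star$ in one stroke, so $\bar\mu_1$ is unnecessary.
\item With triples drawn from $\mu_N^{\lambda}[\bu,\bx,\ba]$, it proves your concentration step. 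Your population-level coupling for $\Wc(\bar\mu_1,\mu_1^\star)$ is then a valid alternative to the paper's route.
\end{itemize}

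\textbf{Inconsistency about Lipschitz continuity in the action.} Your bound $\E\,\Wc(\bar\mu_1,\mu_1^\star)\le C\,\Wc(\mu_N^{\lambda}[\bu,\bx,\ba],\nu)$ needs $F$ to be Lipschitz in $a$, because an optimal coupling pairs different actions. The permutation step above needs the same. Assumption \ref{assumptions: F and f}(i) as written keeps the same $a$ on both sides, which is exactly the defect you flagged for $f$.

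Your workaround for $f$ also does not work as described. In $\int f(\cdot,\nu)\,\d(\nu-\mu_N^{\lambda}[\bu,\bx,\ba])$, $f$ is evaluated at the non-empirical measure $\nu$, where Assumption \ref{assumption : fN to f and FN to F}(i) gives no control of $f-f_N$. Moreover, $f_N$ is not Lipschitz in $u$, so Kantorovich duality on $I\times\Xc\times A$ cannot be applied to it.

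You have two consistent options.
\begin{itemize}
\item Assume Lipschitz continuity of $f$ and $F$ in $(u,x,a)$, as the paper's proof tacitly does (in term \textbf{I} and through the constant $L_{\tilde F}$). Your workaround is then superfluous.
\item Keep the literal hypotheses. Pass first through i.i.d.\ samples from $\nu$, at cost $M_N$. Move labels and states to $(\frac iN,x^i)$ along the optimal permutation of the triples, keeping the sampled actions and using only the $u$-, $x$- and measure-Lipschitz bounds of $f$ and $F$. Then swap to $f_N,F_N$ at the resulting grid empirical measures via Assumption \ref{assumption : fN to f and FN to F}. Only after that, use the $(x,a)$-Lipschitz bounds of Assumption \ref{assumption: regularity_prop_chaos_Fn_fn_F_f}. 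This costs extra $\epsilon_N^f$ and $\epsilon_N^F$ terms, which the stated bound allows.
\end{itemize}
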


\begin{proof}
    Let $\ba  = (a^i)_{i \in \llbracket 1 , N \rrbracket} \in A^N$ , $\mathrm{a} \in L^0(I \times \Xc \times [0,1] ; A)$ and $\big((U,\xi_{\bx}), Z \big) \sim \mu_N^{\lambda}[\bu, \bx] \otimes \Uc([0,1])$. We recall here for convenience  the definition of the random measure  $\mu_N^{\lambda} \big[ \bu , \boldsymbol{F}_N[\bx,\ba,\boldsymbol{\epsilon}_1] \big]$ valued in  $\Pc_{\lambda}(I \times \Xc)$ as follows
    \begin{align}
        \mu_N^{\lambda} \big[ \bu , \boldsymbol{F}_N[\bx,\ba, \boldsymbol{\epsilon}_1] \big](\d x ,\d u ) := \Big(\sum_{j=1}^{N} \delta_{I_j^N}(u) \delta_{F_N( \frac{j}{N}, x^j,a^j, \mu_N [\bu, \bx, \ba ], \epsilon_1^j, \epsilon_1^0)}(\d x) \Big)\d u .
    \end{align}
    Recalling the definition of $\T^a$ in  \eqref{eq : definition Bellman operator before sup}, $\T_N^{\ba}$ in \eqref{eq : operator Bellman N-agent before sup} and the unlifted function definition in \eqref{eq : unlifted function}, we have
    \begin{align}\label{eq : operators difference}
           \widetilde{\T^{\mathrm{a}} V}(\bx) -  \T_N^{\ba} \widetilde{V}(\bx) &= \E \Big[ f \big((\tilde{\xi}_{\bx}, \mathrm{a} \big(\tilde{\xi}_{\bx},Z \big),\P_{(\tilde{\xi}_{\bx},\mathrm{a} (\tilde{\xi}_{\bx},Z))} \big)     - \frac{1}{N} \sum_{i=1}^{N}  f_N \big(\frac{i}{N},x^i,a^i, \mu_N \big[ \bu, \bx,\ba \big] \big) \Big] \notag \\
           &\qquad+ \beta \E \Big[ V \big( \P^0_{\tilde{F} \big(\tilde{\xi}_{\bx},\mathrm{a}(\tilde{\xi}_{\bx},Z), \P_{(\tilde{\xi}_{\bx},\mathrm{a}(\tilde{\xi}_{\bx},Z))}, \epsilon_1,\epsilon_1^0 \big)} \big)   -  V \big( \mu_N^{\lambda} \big[\bu , F_N (\bx,\ba , \boldsymbol{\epsilon}_1 )\big] \big) \Big].
    \end{align}
\noindent \textbf{Estimation of the ``$f$" term :}      
\begin{align}
    \E \Big[  f(\tilde{\xi}_{\bx}, \mathrm{a}(\tilde{\xi}_{\bx},Z), \P_{(\tilde{\xi}_{\bx}, \mathrm{a}(\tilde{\xi}_{\bx},Z))} \big) - \frac{1}{N} \sum_{i=1}^{N} f_N \big(\frac{i}{N}, x^i,a^i,\mu_N \big[\bu,\bx,\ba \big] \big) \Big] &= \hat{f}(\P_{(\tilde{\xi}_{\bx}, \mathrm{a}(\tilde{\xi}_{\bx},Z))}) - \hat{f} \big(\mu_N^{\lambda} \big[\bu, \bx, \ba \big] \big) \\
    &\quad + \hat{f}(\mu_N^{\lambda} \big[\bu,\bx,\ba \big] ) - \hat{f}(\mu_N\big[\bu, \bx,\ba] ) \notag \\
    &\quad+ \hat{f}(\mu_N \big[\bu, \bx, \ba \big])- \hat{f}_N(\mu_N \big[\bu , \bx,\ba \big]),
\end{align}
where we denote $ \hat{f}(\mu) := \int_{I \times \Xc \times A} f(u,x,a,\mu)\mu(\d u , \d x , \d a )$ and 
$ \hat{f}_N(\mu) :=  \int_{I \times \Xc \times A} f_N(u,x,a,\mu)\mu(\d u , \d x , \d a)$ for any $\mu \in \Pc(I \times \Xc \times A)$.

\vspace{1mm}

\noindent \textbf{Estimate of $\hat{f}(\P_{(\tilde{\xi}_{\bx}, \mathrm{a}(\tilde{\xi}_{\bx},V))}) - \hat{f}(\mu_N^{\lambda} \big[\bu, \bx,\ba \big])$ :} Using the  definition introduced above, we have 
\begin{align}
    \hat{f}(\P_{(\tilde{\xi}_{\bx}, \mathrm{a}(\tilde{\xi}_{\bx},Z))}) - \hat{f}(\mu_N^{\lambda} \big[\bu, \bx,\ba \big]) &= \int_{I \times \Xc \times A} f \big(u,x,a,\P_{(\tilde{\xi}_{\bx}, \mathrm{a}(\tilde{\xi}_{\bx},Z))}) \Big[\P_{(\tilde{\xi}_{\bx}, \mathrm{a}(\tilde{\xi}_{\bx},Z))} - \mu_N^{\lambda} \big[\bu , \bx, \ba \big] \Big](\d u, \d x , \d a)  \notag \\
    &\quad + \int_{I \times \Xc \times A} \Big[f \big(u,x,a, \P_{(\tilde{\xi}_{\bx}, \mathrm{a}(\tilde{\xi}_{\bx},Z))}\big) - f(u,x,a, \mu_N^{\lambda} \big[\bu , \bx,\ba ] \big) \Big] \mu_N^{\lambda} \big[\bu , \bx,\ba \big](\d u, \d x, \d a) \notag \\
    &= \textbf{I} + \textbf{II}.
\end{align}

\noindent The bound for $\textbf{I}$ follows from Kantorovitch duality result \eqref{eq : Kantorovitch duality} as $\P_{(\tilde{\xi}_{\bx}, \mathrm{a}(\tilde{\xi}_{\bx},Z))}$ and $\mu_N^{\lambda} \big[ \bu, \bx,\ba \big]$ are in $\Pc_{\lambda}(I \times \Xc \times A)$ and the fact that the map $(u,x,a) \mapsto f(u,x,a,\P_{(\tilde{\xi}_{\bx}, \mathrm{a}(\tilde{\xi}_{\bx},Z))})$ is Lipschitz. Therefore, we have
\begin{align}
     \int_{I \times \Xc \times A} f \big(u,x,a,\P_{(\tilde{\xi}_{\bx}, \mathrm{a}(\tilde{\xi}_{\bx},Z))}) \Big[\P_{(\tilde{\xi}_{\bx}, \mathrm{a}(\tilde{\xi}_{\bx},Z))} - \mu_N^{\lambda} \big[\bu , \bx, \ba \big] \Big](\d u, \d x , \d a)  \leq L_f \Wc \big(\P_{(\tilde{\xi}_{\bx}, \mathrm{a}(\tilde{\xi}_{\bx},Z))},\mu_N^{\lambda} \big[\bu , \bx, \ba \big] \big)
\end{align}
The bound for $\textbf{II}$ follows directly from the Lipschitz property in the measure argument of $f$ in Assumption \ref{assumptions: F and f} such that we have $ \textbf{II} \leq  L_f \Wc(  \P_{(\tilde{\xi}_{\bx}, \mathrm{a}(\tilde{\xi}_{\bx},Z))},\mu_N^{\lambda}\big[\bu, \bx,\ba])$.
Therefore, we end up with
\begin{align}
     \Big | \hat{f}(\P_{(\tilde{\xi}_{\bx}, \mathrm{a}(\tilde{\xi}_{\bx},Z))} - \hat{f}(\mu_N^{\lambda} \big[\bu , \bx,\ba \big]) \Big | \leq  2 L_f\Wc(\P_{(\tilde{\xi}_{\bx}, \mathrm{a}(\tilde{\xi}_{\bx},Z))}, \mu_N^{\lambda} \big[\bu, \bx,\ba \big]).
\end{align}

\vspace{0.2 cm}

\noindent \textbf{Estimate of $\hat{f}(\mu_N^{\lambda}\big[ \bu , \bx,\ba \big]) - \hat{f}(\mu_N\big[\bu,\bx,\ba \big])$ :}
 Using the definition of $\mu_N^{\lambda} \big[\bu, \bx,\ba \big]$ and $\mu_N \big[\bu , \bx,\ba \big]$, we have
\begin{align}
    \hat{f}(\mu_N^{\lambda}\big[\bu , \bx,\ba \big]) - \hat{f}(\mu_N \big[\bu,\bx,\ba \big]) &= \sum_{j=1}^{N} \int_{\frac{j-1}{N}}^{\frac{j}{N}} \Big[ f(u,x^j,a^j, \mu_N^{\lambda} \big[\bu , \bx,\ba \big]) - f( \frac{j}{N} , x^j,a^j , \mu_N^{\lambda} \big[\bu , \bx,\ba \big]) \Big] \d u  \notag \\
    &\quad + \frac{1}{N} \sum_{j=1}^{N} \Big[ f(\frac{j}{N},x^j,a^j,\mu_N^{\lambda} \big[\bu ,\bx,\ba \big]) - f(\frac{j}{N}, x^j,a^j , \mu_N \big[\bu , \bx,\ba]) \Big]  \notag \\
    &= \textbf{III} + \textbf{IV}.
\end{align}
\noindent The bound for $\textbf{III}$ follows from the Lipschitz assumption on the label argument of $f$ such that we have $ \textbf{III} \leq  \frac{L_f}{2N}$.
\noindent  The bound for $\textbf{IV}$ follows directly from the Lipschitz property in the measure argument of $f$ in Assumption \ref{assumptions: F and f} and Lemma \ref{lemma : control wasserstein} such that we have $ \textbf{IV} \leq L_f \Wc(\mu_N^{\lambda} \big[\bu ,\bx,\ba \big], \mu_N \big[\bu ,\bx,\ba \big]) \leq  \frac{L_f}{2N}$
Therefore, we end up with
\begin{align}
 \Big | \hat{f}(\mu_N^{\lambda} \big[\bu,\bx,\ba \big])  -\hat{f}(\mu_N \big[\bu , \bx, \ba \big] )  \Big |\leq 
     \frac{L_f}{N}.
\end{align}
\noindent \textbf{Estimate of $\hat{f}(\mu_N\big[\bu, \bx,\ba \big]) - \hat{f}_N(\mu_N \big[\bu ,\bx ,\ba \big])$ :} This follows directly from Assumption \ref{assumption : fN to f and FN to F}
\begin{align}
    \big | \frac{1}{N} \sum_{j=1}^{N} f(\frac{j}{N} , x^j,a^j, \mu_N \big[\bu ,\bx,\ba \big]) - f_N(\frac{j}{N}, x^j,a^j,\mu_N \big[\bu , \bx, \ba \big]) \big | 
    &\leq \epsilon_N^f.
\end{align}
Merging all the terms, we end up with 
\begin{align}\label{eq : control of f term}
   &   \bigg | \E \Big[  f(\tilde{\xi}_{\bx}, \mathrm{a}(\tilde{\xi}_{\bx},V),  \P_{(\tilde{\xi}_{\bx},\mathrm{a}(\tilde{\xi}_{\bx},V))} \big) - \frac{1}{N} \sum_{i=1}^{N} f_N \big(\frac{i}{N}, x^i,a^i,\mu_N \big[\bu , \bx,\ba \big] \big) \Big] \bigg| \\
\leq & \;\;\;    2 L_f \Wc(\tilde{\xi}_{\bx},\mathrm{a}(\tilde{\xi}_{\bx},V), \mu_N^{\lambda} \big[ \bu , \bx ,\ba \big])   + \frac{L_f}{N} + \epsilon_N^f.  
\end{align}

\vspace{1mm}

\noindent \textbf{Estimation of the ``$F$" term : }
Let us  now focus on the second term in \eqref{eq : operators difference}. From the Hölder property of the value function $V$ and the Jensen's inequality since $x \mapsto x^{\gamma}$ is concave (recalling that $\gamma \leq 1$), we have 
\begin{align}\label{eq : difference value functions}
    &\Big |\E \Big[ V \big( \P^0_{\tilde{F}(\tilde{\xi}_{\bx},\mathrm{a}(\tilde{\xi}_{\bx},Z),\P_{(\tilde{\xi}_{\bx},\mathrm{a}(\tilde{\xi}_{\bx},Z))}, \epsilon_1,\epsilon_1^0)} \big)   -  V \big( \mu_N^{\lambda} [\bu , F_N(\bx,\ba, \boldsymbol{\epsilon}_1)] \big) \Big] \Big | \notag \\
    \leq  &\; K_* \E \Big[ \Wc \big(\P^0_{\tilde{F}(\tilde{\xi}_{\bx},\mathrm{a}(\tilde{\xi}_{\bx},Z), \P_{(\tilde{\xi}_{\bx},\mathrm{a}(\tilde{\xi}_{\bx},Z))}, \epsilon_1,\epsilon_1^0)},\mu_N^{\lambda} [\bu , F_N(\bx,\ba,\boldsymbol{\epsilon}_1) \big] \big) \Big]^{\gamma}.
\end{align}
Defining now $\big(\tilde{\xi}^i =(U^i,\xi^i),Z_0^i\big)_{i \in \llbracket 1, N \rrbracket}$ be  $N$ i.i.d random variables of $\boldsymbol{\epsilon}_1 :=\big((\epsilon_1^i)_{i \in \llbracket 1, N \rrbracket}, \epsilon^0 \big)$, such that $(\tilde{\xi}^i,Z_0^i) \sim \mu_N^{\lambda} \big[\bu ,\bx \big] \otimes  \Uc([0,1])$  for any $i \in \llbracket 1, N \rrbracket$. For any i.i.d random variables  $(\tilde{\epsilon}_1^i)_{i \in \llbracket 1, N \rrbracket}$ such that $    (( \tilde{\xi}^i,Z_0^i,\tilde{\epsilon}_1^i)_{i \in \llbracket 1, N \rrbracket}, \epsilon_1^0) \overset{d}{=} (( \tilde{\xi}^i,Z_0^i,\epsilon_1^i)_{i \in \llbracket 1, N \rrbracket}, \epsilon_1^0)$.
Then, we have
\begin{align}\label{eq : bound on control term F}
    &\E \Big[ \Wc \big(\P^0_{\tilde{F}(\tilde{\xi}_{\bx},\mathrm{a}(\tilde{\xi}_{\bx},Z), \P_{(\tilde{\xi}_{\bx},\mathrm{a}(\tilde{\xi}_{\bx},Z))}, \epsilon_1,\epsilon_1^0)},\mu_N^{\lambda} [ \bu , F_N(\bx,\ba,\boldsymbol{\epsilon}_1)] \big) \Big]  \notag \\
    \quad &\leq \E \Big[   \Wc(\P^0_{\tilde{F}(\tilde{\xi}_{\bx},\mathrm{a}(\tilde{\xi}_{\bx},Z),  \P_{(\tilde{\xi}_{\bx},\mathrm{a}(\tilde{\xi}_{\bx},Z))}, \epsilon_1,\epsilon_1^0)},\frac{1}{N} \sum_{i=1}^{N} \delta_{\tilde{F}(\tilde{\xi}^i, \mathrm{a}(\tilde{\xi}^i,Z_0^i), \P_{(\tilde{\xi}_{\bx},\mathrm{a}(\tilde{\xi}_{\bx},Z))}, \tilde{\epsilon}_1^i,\epsilon_1^0)} ) \Big] \notag \\
    &\quad + \E \Big[   \Wc(\frac{1}{N} \sum_{i=1}^{N} \delta_{\tilde{F(}\tilde{\xi}^i, \mathrm{a}(\tilde{\xi}^i,Z_0^i), \P_{(\tilde{\xi}_{\bx},\mathrm{a}(\tilde{\xi}_{\bx},Z))}, \tilde{\epsilon}_1^i,\epsilon_1^0)},  \frac{1}{N} \sum_{i=1}^{N} \delta_{\tilde{F}(\frac{i}{N},x^i,a^i, \mu_N^{\lambda} [\bx,\ba],\epsilon_1^i,\epsilon_0) }) \Big] \notag \\
    &\quad +  \E \Big[ \Wc \big(\frac{1}{N} \sum_{i=1}^{N} \delta_{\tilde{F}(\frac{i}{N},x^i,a^i, \mu_N^{\lambda} [\bx,\ba],\epsilon_1^i,\epsilon_0)},\mu_N^{\lambda} [\bu , F(\bx,\ba,\boldsymbol{\epsilon}_1) ]  \big)\Big]  \notag \\
    &\quad + \E \Big[ \Wc \big(\mu_N^{\lambda} [\bu , F(\bx,\ba,\boldsymbol{\epsilon}_1)], \mu_N^{\lambda} [\bu, F_N (\bx,\ba,\boldsymbol{\epsilon}_1)] \big)     \Big].
\end{align}

\noindent \textbf{Estimate of $\E \Big[   \Wc(\P^0_{\tilde{F}(\tilde{\xi}_{\bx},a(\tilde{\xi}_{\bx},Z),  \P_{(\tilde{\xi}_{\bx},\mathrm{a}(\tilde{\xi}_{\bx},Z))}, \epsilon_1,\epsilon_1^0)},\frac{1}{N} \sum_{i=1}^{N} \delta_{\tilde{F}(\tilde{\xi}^i, \mathrm{a}(\tilde{\xi}^i,Z_0^i), \P_{(\tilde{\xi}_{\bx},\mathrm{a}(\tilde{\xi}_{\bx},Z))}, \tilde{\epsilon}_1^i,\epsilon_1^0)} ) \Big]$ : } 
\noindent This estimate follows directly from the definition of $M_N$ in \eqref{eq : M_N definition} such that we can write
\begin{align}
    \E \Big[   \Wc(\P^0_{\tilde{F}(\tilde{\xi}_{\bx},\mathrm{a}(\tilde{\xi}_{\bx},Z),  \P_{(\tilde{\xi}_{\bx},\mathrm{a}(\tilde{\xi}_{\bx},Z))}, \epsilon_1,\epsilon_1^0)},\frac{1}{N} \sum_{i=1}^{N} \delta_{\tilde{F}(\tilde{\xi}^i, \mathrm{a}(\tilde{\xi}^i,Z_0^i), \P_{(\tilde{\xi}_{\bx},\mathrm{a}(\tilde{\xi}_{\bx},Z))}, \tilde{\epsilon}_1^i,\epsilon_1^0)} ) \Big] \leq M_N.
\end{align}
\vspace{0.1 cm}
\noindent \textbf{Estimate of $\E \Big[   \Wc(\frac{1}{N} \sum_{i=1}^{N} \delta_{\tilde{F}(\tilde{\xi}^i, \mathrm{a}(\tilde{\xi}^i,Z_0^i),\P_{(\tilde{\xi}_{\bx},\mathrm{a}(\tilde{\xi}_{\bx},Z))}, \tilde{\epsilon}_1^i,\epsilon_1^0)},  \frac{1}{N} \sum_{i=1}^{N} \delta_{\tilde{F}(\frac{i}{N},x^i,a^i, \mu_N^{\lambda}[\bu,\bx,\ba],\epsilon_1^i,\epsilon_0) }) \Big]$ :}

\noindent We denote by $\sigma^{ \big(\tilde{\xi}^i,\mathrm{a}(\tilde{\xi}^i,Z_0^i)\big)_{i \in \llbracket 1, N \rrbracket},(\frac{i}{N}, x^i,a^i)_{ i \in \llbracket 1, N \rrbracket}}$ the optimal permutation defined according to the definition in Lemma \ref{lemma : Mesurable Optimal Permutation} that we simplify by denoting $\sigma$. Now, from similar arguments to the ones used in \cite{motte2023quantitative} (Lemma 3.1), we can apply the previous relation to $(\tilde{\epsilon}^i_1)_{i \in \llbracket 1, N \rrbracket} = (\epsilon_1^{(\sigma^{-1})i})_{i \in \llbracket 1, N \rrbracket}$ and after similar computations, we can obtian
\begin{align}
    & \E \Big[   \Wc(\frac{1}{N} \sum_{i=1}^{N} \delta_{\tilde{F}(\tilde{\xi}^i, \mathrm{a}(\tilde{\xi}^i,Z_0^i),\P_{(\tilde{\xi}_{\bx},\mathrm{a}(\tilde{\xi}_{\bx},Z))}, \epsilon_1^{(\sigma^{-1})i},\epsilon_1^0)},  \frac{1}{N} \sum_{i=1}^{N} \delta_{\tilde{F}(\frac{i}{N},x^i,a^i, \mu_N^{\lambda}[\bu,\bx,\ba],\epsilon_1^i,\epsilon_1^0) }) \Big] \\
    \leq& \;  L_{\tilde{F}} \Big( M_N + \Wc(\mu_N^{\lambda} \big[\bu ,\bx,\ba \big], \mu_N\big[\bu , \bx ,\ba \big])  \; + \;  2\Wc \big(\P_{(\tilde{\xi}_{\bx},\mathrm{a}(\tilde{\xi}_{\bx},Z))},\mu_N^{\lambda}\big[\bu,\bx,\ba \big] \Big). 
\end{align}

\vspace{0.1 cm}

\noindent \textbf{Estimate of $\E \Big[ \Wc \big(\frac{1}{N} \sum_{i=1}^{N} \delta_{\tilde{F}(\frac{i}{N},x^i,a^i, \tilde{\mu}_N \big[\bx,\ba],\epsilon_1^i,\epsilon_0)},\mu_N^{\lambda} \big[\bu,F(\bx,\ba, \boldsymbol{\epsilon}_1) \big]  \big)\Big]$ :} This term is handled similarly  following the Lipschitz assumption on the measure argument of $F$ and with similar arguments to the proof of Lemma \ref{lemma : control wasserstein}, we end up with 
\begin{align}
    \E \Big[ \Wc \big(\frac{1}{N} \sum_{i=1}^{N} \delta_{\tilde{F}(\frac{i}{N},x^i,a^i, \mu_N^{\lambda} [\bu,\bx,\ba],\epsilon_1^i,\epsilon_0)},\mu_N^{\lambda} \big[\bu , F(\bx,\ba,\boldsymbol{\epsilon}_1) \big]  \big)\Big] & \leq  \frac{C}{N}.
\end{align}
\noindent \textbf{Estimate of $ \E \Big[\Wc (\mu_N^{\lambda} \big[\bu, F(\bx,\ba,\boldsymbol{\epsilon}_1) \big], \mu_N^{\lambda} \big[\bu , F_N (\bx,\ba,\boldsymbol{\epsilon}_1) \big] \big) \Big]$ :} This follows directly from Assumption \ref{assumption : fN to f and FN to F} since we have
\begin{align}
   &  \E \Big[ \Wc( \mu_N^{\lambda} \big[\bu,F(\bx,\ba,\boldsymbol{\epsilon}_1) \big], \mu_N^{\lambda}\big[\bu,F_N (\bx,\ba,\boldsymbol{\epsilon}_1) \big] \big)     \Big] \\
    \leq & \; \E \Big[ \frac{1}{N} \sum_{i=1}^{N} d   \big(F(\frac{i}{N},  x^i,a^i , \mu_N^{\lambda} \big[\bu,\bx,\ba] , \epsilon_1^i, \epsilon_1^0),F_N(\frac{i}{N},  x^i,a^i , \mu_N^{\lambda} \big[\bu,\bx,\ba] , \epsilon_1^i, \epsilon_1^0) \big)  \Big] \; \leq \;  \epsilon_N^F. 
\end{align}
Recalling \eqref{eq : bound on control term F}, we have 
\begin{align}
&\E \Big[ \Wc(\P^0_{\tilde{F}(\tilde{\xi}_{\bx},\mathrm{a}(\tilde{\xi}_{\bx},Z), \P_{(\tilde{\xi}_{\bx},\mathrm{a}(\tilde{\xi}_{\bx},Z))}, \epsilon_1,\epsilon_1^0)},\mu_N^{\lambda} \big[\bu, F_N(\bx,\ba,\boldsymbol{\epsilon}_1) \big]) \Big] \notag \\
\quad \leq  & \hspace{0.1 cm}M_N + L_{\tilde{F}} \Big(M_N + \Wc(\mu_N^{\lambda}[\bu,\bx,\ba], \mu_N\big[\bu,\bx ,\ba \big]) + 2\Wc \big(\P_{(\tilde{\xi}_{\bx},\mathrm{a}(\tilde{\xi}_{\bx},Z))},\mu_N^{\lambda}\big[\bu,\bx,\ba \big] \Big)  + \frac{C}{N} + \epsilon_N^F ,
\end{align}
which implies by \eqref{eq : difference value functions}, Lemma \eqref{lemma : control wasserstein} and  since $\frac{1}{N} = o(M_N)$ the existence of  a positive constant $C \geq 0$ such that
\begin{align}
    \Big | \E \Big[ V \big( \P^0_{\tilde{F}(\tilde{\xi}_{\bx},\mathrm{a}(\tilde{\xi}_{\bx},Z), \P_{(\tilde{\xi}_{\bx},\mathrm{a}(\tilde{\xi}_{\bx},Z)}, \epsilon_1,\epsilon_1^0)} \big)   -  V \big( \mu_N^{\lambda} \big[\bu, F_N[\bx,\ba,\boldsymbol{\epsilon}_1]\big] \big) \Big] \Big |  \leq  C\Big (M_N^{\gamma} + (\epsilon_N^F)^{\gamma} + \Wc \big(\P_{(\tilde{\xi}_{\bx},\mathrm{a}(\tilde{\xi}_{\bx},Z))},\mu_N^{\lambda}\big[\bu,\bx,\ba \big] \Big)^{\gamma} \Big)
\end{align}

\noindent Together with \eqref{eq : control of f term} and plugging into \eqref{eq : operators difference}, we have 
\begin{align}
    \big | \widetilde{\T^a V}(\bx) -  \T_N^{\ba} \widetilde{V}(\bx) \big | &\leq 2 L_f \Wc(\P_{(\tilde{\xi}_{\bx},\mathrm{a}(\tilde{\xi}_{\bx},Z))}, \mu_N^{\lambda} \big[ \bu , \bx ,\ba \big])  +  \epsilon_N^f  + C\Big (M_N^{\gamma}  + (\epsilon_N^F)^{\gamma} + \Wc \big(\P_{(\tilde{\xi}_{\bx},\mathrm{a}(\tilde{\xi}_{\bx},Z))},\mu_N^{\lambda}\big[\bu,\bx,\ba \big] \Big)^{\gamma} \Big) \notag \\
    &\leq C\Big( M_N^{\gamma} + \Wc(\P_{(\tilde{\xi}_{\bx},\mathrm{a}(\tilde{\xi}_{\bx},Z))}, \mu_N^{\lambda} \big[\bu, \bx ,\ba \big])^{\gamma} +  (\epsilon_N^F)^{\gamma} + \epsilon_N^f  \Big)
\end{align}
where we recall that $\gamma \leq 1$ and using the fact that $\Wc \big(\P_{(\tilde{\xi}_{\bx},a(\tilde{\xi}_{\bx},Z))},\mu_N^{\lambda}[\bu,\bx,\ba] \big)$ is bounded by a constant depending of $\Delta_{I \times \Xc \times A}$. This ends the proof. 
\end{proof}

\noindent The previous Lemma \ref{lemma : difference of operators with a} suggests us for any $\bx \in \Xc^N$ to look for suitable choices of $\mathrm{a} \in L^0(I \times \Xc \times [0,1];A)$ and $\ba \in A^N$ such that the Wasserstein distance $\Wc \big(\P_{(\tilde{\xi}_{\bx},\mathrm{a}(\tilde{\xi}_{\bx},Z))},\mu_N^{\lambda}[\bu,\bx,\ba] \big)$ is as small as possible. We give the following lemmas

\begin{Lemma}\label{lemma : bounded with statistic order}
   For  all $\bx :=(x^1,\ldots,x^N) \in \Xc^N$  and $\boldsymbol{\tilde{\xi}} := \big( \tilde{\xi}^i = (U^i,\xi^i)\big)_{i \in \llbracket 1 , N \rrbracket}$ i.i.d with distribution $\mu_N^{\lambda} \big[\bu , \bx \big]$, the following holds true
    \begin{align}
        \E \Big[ \Wc \big(\frac{1}{N} \sum_{i=1}^{N} \delta_{(U^i,\xi^i)} , \frac{1}{N} \sum_{i=1}^{N} \delta_{( \frac{i}{N} , x^i)} \big) \Big] \leq  M_N + \frac{1}{2N}.
    \end{align}
\end{Lemma}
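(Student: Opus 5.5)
We split the distance with the triangle inequality, using the lifted measure $\mu_N^{\lambda}[\bu,\bx]$ as the intermediate point:
\begin{align}
\Wc \Big(\frac{1}{N} \sum_{i=1}^{N} \delta_{(U^i,\xi^i)} , \frac{1}{N} \sum_{i=1}^{N} \delta_{( \frac{i}{N} , x^i)} \Big) \leq \Wc \Big(\frac{1}{N} \sum_{i=1}^{N} \delta_{(U^i,\xi^i)} , \mu_N^{\lambda}[\bu,\bx] \Big) + \Wc \big(\mu_N^{\lambda}[\bu,\bx], \mu_N[\bu,\bx] \big).
\end{align}
The first term on the right is the Wasserstein distance between the empirical measure of $N$ i.i.d. samples and their common law $\mu_N^{\lambda}[\bu,\bx] \in \Pc(I \times \Xc)$. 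To compare with the definition \eqref{eq : M_N definition} of $M_N$, posed on $I \times \Xc \times A$, we embed $I \times \Xc$ into $I \times \Xc \times A$ by fixing some $a_0 \in A$, i.e.\ we use $(u,x) \mapsto (u,x,a_0)$. This embedding is an isometry for the product metric, so Wasserstein distances are preserved. Taking expectations, the first term is therefore bounded by $M_N$.

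The second term is deterministic, and we bound it by an explicit coupling. Let $U \sim \lambda$ and set $J := \lceil NU \rceil$, which is uniform on $\llbracket 1, N \rrbracket$ almost surely. The pair $(U, x^J)$ has law $\mu_N^{\lambda}[\bu,\bx]$, since on $I_j^N$ the disintegration is $\delta_{x^j}$. The pair $(J/N, x^J)$ has law $\mu_N[\bu,\bx]$. The cost of this coupling is
\begin{align}
\E\big[|U - J/N|\big] = \sum_{j=1}^N \int_{I_j^N} \big(\tfrac{j}{N} - u\big)\,\d u = N \cdot \frac{1}{2N^2} = \frac{1}{2N},
\end{align}
because the state components coincide. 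This is presumably the content of Lemma \ref{lemma : control wasserstein}, so we may simply invoke it.

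Adding the two bounds gives $M_N + \frac{1}{2N}$. The only delicate point is the dimension mismatch in the definition of $M_N$, which the isometric embedding resolves. Everything else is routine.
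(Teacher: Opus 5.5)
Your proof is correct and follows the paper's argument exactly: a triangle inequality through $\mu_N^{\lambda}[\bu,\bx]$, with the first term bounded by $M_N$ and the second by $\frac{1}{2N}$ via Lemma \ref{lemma : control wasserstein}. Your isometric embedding $(u,x)\mapsto(u,x,a_0)$ makes explicit a step the paper leaves implicit, namely that $M_N$ is defined as a supremum over $\Pc(I\times\Xc\times A)$ rather than over $\Pc(I\times\Xc)$.
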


\begin{proof}
    Let $i \in \llbracket 1 , N \rrbracket$. Since $(U^i,\tilde{\xi}^i) \sim \mu_N^{\lambda} [\bu,\bx]$, we have $U^i \sim \Uc([0,1])$ and $\tilde{\xi}^i \sim \frac{1}{N} \sum_{i=1}^{N} \delta_{x^i}$. We use the triangular inequality of the Wasserstein distance to state that 
    \begin{align}
        \E \Big[ \Wc \big(\frac{1}{N} \sum_{i=1}^{N} \delta_{(U^i,\tilde{\xi}^i)} , \frac{1}{N} \sum_{i=1}^{N} \delta_{( \frac{i}{N} , x^i)} \big) \Big] &\leq  \E \Big[ \Wc \big(\frac{1}{N} \sum_{i=1}^{N} \delta_{(U^i,\tilde{\xi}^i)} , \mu_N^{\lambda}[\bu,\bx  ])\Big] + \Wc(\mu_N^{\lambda}[\bu,\bx], \mu_N[\bu,\bx] \big)  \\
        &\leq M_N + \frac{1}{2N}, 
    \end{align}
    where we used the definition of $M_N$ in \eqref{eq : M_N definition} and Lemma \ref{lemma : control wasserstein}.
\end{proof}

\begin{Lemma}\label{lemma : optimal a^a from given law}
    Let $x \in \Xc^N$. Then, for any $\mathrm{a} \in L^0(I \times \Xc \times [0,1];A)$, there exists $\ba^{\mathrm{a}} \in A^N$ such that 
    \begin{align}
        \Wc \big(\P_{(\tilde{\xi}_{\bx}, \mathrm{a}(\tilde{\xi}_{\bx},Z))}, \mu_N^{\lambda} \big[ \bu , \bx ,\ba^{\mathrm{a}}] \big) \leq  2 M_N  + \frac{1}{N},
    \end{align}
    where $(\tilde{\xi}_{\bx},Z) \sim \mu_N^{\lambda} \big[\bu ,\bx \big] \otimes \Uc([0,1])$. Conversely, for any $\ba \in A^N$, there exists $\mathrm{a}^{\ba} \in L^0(I \times \Xc \times [0,1] ;A)$ such that
    \begin{align}
        \Wc\big(\P_{(\tilde{\xi}_{\bx}, \mathrm{a}^{\ba}(\tilde{\xi}_{\bx},Z))}, \mu_N^{\lambda}\big[\bu , \bx ,\ba] \big)=0.
    \end{align}
\end{Lemma}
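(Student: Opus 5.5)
The plan is to prove the converse statement by an explicit construction. The first statement is then obtained by a probabilistic argument: build a random action vector whose expected Wasserstein error is at most $2M_N+\frac1N$, and pick a realization at least as good as the mean.

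\textbf{Converse.} Given $\ba=(a^i)_{i}\in A^N$, set
\[
\mathrm{a}^{\ba}(u,x,z):=\sum_{i=1}^N \mathds{1}_{I_i^N}(u)\,a^i .
\]
This map is Borel and depends on neither $x$ nor $z$. Under $(\tilde\xi_{\bx},Z)=((U,\xi_{\bx}),Z)\sim\mu_N^\lambda[\bu,\bx]\otimes\Uc([0,1])$, we have $\xi_{\bx}=x^i$ a.s. on $\{U\in I_i^N\}$, and then $\mathrm{a}^{\ba}(\tilde\xi_{\bx},Z)=a^i$. Hence the law of $(U,\xi_{\bx},\mathrm{a}^{\ba}(\tilde\xi_{\bx},Z))$ is exactly $\big(\sum_i\mathds{1}_{I_i^N}(u)\delta_{(x^i,a^i)}\big)\d u=\mu_N^\lambda[\bu,\bx,\ba]$. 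The Wasserstein distance is therefore $0$.

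\textbf{Direct part.} Fix $\mathrm{a}$ and set $\nu:=\P_{(\tilde\xi_{\bx},\mathrm{a}(\tilde\xi_{\bx},Z))}$. Take i.i.d.\ pairs $(\tilde\xi^i=(U^i,\xi^i),Z^i)\sim\mu_N^\lambda[\bu,\bx]\otimes\Uc([0,1])$, and set $Y^i:=(U^i,\xi^i,\mathrm{a}(\tilde\xi^i,Z^i))$, which are i.i.d.\ with law $\nu$. By the definition of $M_N$ in \eqref{eq : M_N definition},
\[
\E\big[\Wc\big(\tfrac1N\textstyle\sum_i\delta_{Y^i},\nu\big)\big]\le M_N .
\]
Next, let $\sigma$ be the measurable optimal permutation from Lemma \ref{lemma : Mesurable Optimal Permutation} matching the atoms $(U^i,\xi^i)$ to $(\frac jN,x^j)$. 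Between two uniform empirical measures with $N$ atoms, the optimal coupling is attained at a permutation (Birkhoff), so
\[
\Wc\big(\tfrac1N\textstyle\sum_i\delta_{\tilde\xi^i},\mu_N[\bu,\bx]\big)=\tfrac1N\sum_i\bd\big(\tilde\xi^i,(\tfrac{\sigma(i)}N,x^{\sigma(i)})\big).
\]
Define the random action vector $\ba(\omega)$ by $a^{\sigma(i)}:=\mathrm{a}(\tilde\xi^i,Z^i)$. Couple $Y^i$ with $(\frac{\sigma(i)}N,x^{\sigma(i)},a^{\sigma(i)})$; the action components coincide, so their distance is zero. This gives
\[
\Wc\big(\tfrac1N\textstyle\sum_i\delta_{Y^i},\mu_N[\bu,\bx,\ba]\big)\le\Wc\big(\tfrac1N\textstyle\sum_i\delta_{\tilde\xi^i},\mu_N[\bu,\bx]\big).
\]
By Lemma \ref{lemma : bounded with statistic order}, the expectation of the right-hand side is at most $M_N+\frac1{2N}$. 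Finally, Lemma \ref{lemma : control wasserstein} gives $\Wc(\mu_N[\bu,\bx,\ba],\mu_N^\lambda[\bu,\bx,\ba])\le\frac1{2N}$.

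\textbf{Conclusion.} Chaining these three bounds by the triangle inequality yields
\[
\E\big[\Wc(\nu,\mu_N^\lambda[\bu,\bx,\ba(\omega)])\big]\le 2M_N+\tfrac1N .
\]
Since a random variable takes a value no larger than its mean with positive probability, there is some $\omega$ for which $\ba^{\mathrm{a}}:=\ba(\omega)\in A^N$ satisfies the claimed bound.

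\textbf{Expected obstacle.} The delicate point is measurability. The map $\omega\mapsto\ba(\omega)$ must be a random variable so that the expectation makes sense, and this rests on the measurable selection of the optimal permutation in Lemma \ref{lemma : Mesurable Optimal Permutation}. The rest is routine triangle-inequality bookkeeping.
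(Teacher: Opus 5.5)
Your proposal is correct and takes essentially the paper's route for the direct part. Like the paper, you draw i.i.d.\ samples from $\mu_N^{\lambda}[\bu,\bx]\otimes\Uc([0,1])$, use the measurable optimal permutation to move the sampled actions onto the atoms $(\frac{j}{N},x^j)$, chain the bounds $M_N$, $M_N+\frac{1}{2N}$ and $\frac{1}{2N}$ by the triangle inequality, and pick a realization below the mean; attaching $Z^i$ to $\tilde\xi^i$ before permuting is slightly cleaner than the paper's $\mathrm{a}(\xi^{\sigma_i},Z_0^i)$, which tacitly needs an exchangeability argument to keep the pairs i.i.d. For the converse, the paper invokes Kallenberg's randomization lemma, whereas your explicit block-wise map shows directly that the action's conditional law is a Dirac mass fixed by the label; define it piecewise ($a^i$ on $I_i^N$, any fixed element of $A$ at $u=1$) rather than as a sum, since $A$ has no linear structure.
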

\begin{proof}
    Fix $\mathrm{a} \in L^0(I \times \Xc \times [0,1];A)$. Let's consider $\boldsymbol{\tilde{\xi}}:= \big(\tilde{\xi}^i =(U^i,\xi^i)\big)_{i \in \llbracket 1,N \rrbracket}$ i.i.d with distribution $\mu_N^{\lambda} \big[ \bu , \bx \big]$, independant from $\boldsymbol{Z}_0 = (Z_0^i)_{i \in \llbracket 1 , N \rrbracket}$, i.i.d $\sim \Uc([0,1])$. Then, we have by denoting $\sigma^{\tilde{\boldsymbol{\xi}},[\bu,\bx]}$ where $[\bu , \bx ]= \big( \frac{1}{N}, x^1), \ldots, (1, x^N) \big)$ the optimal permutation as defined in Lemma \ref{lemma : Mesurable Optimal Permutation}
    \begin{align}  
    &\E \Big[ \Wc\big(\P_{(\tilde{\xi}_{\bx}, \mathrm{a}(\tilde{\xi}_{\bx},Z))}, \frac{1}{N} \sum_{i=1}^{N} \delta_{(\frac{i}{N},x^i,\mathrm{a}(\boldsymbol{\xi}^{\sigma_i^{\boldsymbol{\xi}, [\bu,\bx]}},Z_0^i)} \big)  \Big ] \notag \\
    \leq &\E 
        \Big[  \Wc \big(\P_{(\tilde{\xi}_{\bx}, \mathrm{a}(\tilde{\xi}_{\bx},Z))}, \frac{1}{N} \sum_{i=1}^{N} \delta_{(\boldsymbol{\xi}^{\sigma_i^{\boldsymbol{\xi},[\bu , \bx]}},\mathrm{a}(\boldsymbol{\xi}^{\sigma_i^{\boldsymbol{\xi}, [\bu,\bx]}},Z_0^i)} \big) \Big  ] + \E \Big[ \Wc(\frac{1}{N} \sum_{i=1}^{N} \delta_{(\boldsymbol{\xi}^{\sigma_i^{\boldsymbol{\xi},[\bu , \bx]}},\mathrm{a}(\boldsymbol{\xi}^{\sigma_i^{\boldsymbol{\xi}, [\bu,\bx]}},Z_0^i)} \big),\frac{1}{N} \sum_{i=1}^{N} \delta_{(\frac{i}{N},x^i,\mathrm{a}(\boldsymbol{\xi}^{\sigma_i^{\boldsymbol{\xi}, [\bu,\bx]}},Z_0^i)} \big) \Big] \notag \\
    \leq  &M_N + \E \Big[ \frac{1}{N}  \sum_{i=1}^{N} d_{I\times \Xc} \big( \boldsymbol{\xi}^{\sigma_i^{\boldsymbol{\xi}, [\bu, \bx]}},(\frac{i}{N},x^i)  \big) \Big] ,  \notag \\
    \leq &2 M_N + \frac{1}{2N} 
    \end{align}
    where the last inequality holds from definition of $M_N$, Lemma \ref{lemma : bounded with statistic order} and Lemma \ref{lemma : Mesurable Optimal Permutation}. Recalling Lemma \ref{lemma : control wasserstein} and using Wasserstein's triangular inequality, it follows that  $\P \Big(\Wc \big(\P_{(\tilde{\xi}_{\bx}, \mathrm{a}(\tilde{\xi}_{\bx},Z))},\mu_N^{\lambda} \big[ \bu ,\bx,\mathrm{a}(\boldsymbol{\xi}^{\sigma^{\boldsymbol{\xi}, [\bu,\bx]}},\boldsymbol{Z}_0 \big) \big]\leq  2M_N + \frac{1}{N} \Big) > 0$  which ensures the existence of a vector $\ba^{\mathrm{a}} \in A^N$ such that 
    \begin{align}
        \Wc \big(\P_{(\tilde{\xi}_{\bx}, \mathrm{a}(\tilde{\xi}_{\bx},Z))}, \mu_N^{\lambda} \big[ \bu , \bx,\ba^{\mathrm{a}} \big] \big) \leq 2 M_N+ \frac{1}{N}.
    \end{align}
    On the other hand, given $\ba \in A^N$ and by considering $(\tilde{\xi},\tilde{\alpha}) \sim \mu_N^{\lambda} [\bu , \bx,\ba]$, it is sufficient to choose a kernel $\ba^{\mathrm{a}} \in L^0(I \times \Xc \times [0,1];A)$ for simulating the conditional law of $\tilde{\alpha}$ knowing $\tilde{\xi}$ following \cite{kallenberg2002foundations}.
\end{proof}

\subsubsection{Proof of Theorem \ref{Thm : convergence value functions}}

\begin{proof}

\noindent This proof follows the same line as in the proof of Theorem 2.1 in \cite{motte2023quantitative} once we derived the estimates of Lemma \ref{lemma : difference of operators with a} and \ref{lemma : optimal a^a from given law} (recalling that $\frac{1}{N} =o(M_N)$) so we refer to the references therein.

\end{proof}

\appendix

\renewcommand{\thesection}{\Alph{section}}

\section{Some technical results}

\begin{Lemma}\label{eq : measure products from desintegration}\textnormal{(Measure product from disintegration and maps).}
 Let $F : I \times \Xc \to \Xc$  be a measurable map and $\mu \in L^0(I, \Pc(\Xc))$. Consider now the probability kernel $\nu$ on $I \times \Xc$  defined by $\nu(u)(\d x) :=  F(u,\cdot) \sharp \mu^u \in \Pc(\Xc)$.
Define the measurable mapping $\tilde{F} : I \times \Xc \to I \times \Xc$ as $\tilde{F}(u,x) := \big(u, F(u,x)\big)$. Then the following relation holds 
    \begin{align}\label{eq : equality measures}
        \lambda  \nu = \tilde{F} \sharp\big( \lambda(\d u )\mu^u(\d x) \big).
    \end{align}
\end{Lemma}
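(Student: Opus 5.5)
To prove the identity $\lambda\nu=\tilde F\sharp(\lambda(\d u)\mu^u(\d x))$ I would use the uniqueness of a probability measure on $I\times\Xc$ determined by its values on measurable rectangles. Since rectangles $A\times B$ with $A\in\Bc(I)$ and $B\in\Bc(\Xc)$ form a $\pi$-system generating $\Bc(I)\otimes\Bc(\Xc)$, and both sides are probability measures, Dynkin's $\pi$--$\lambda$ theorem reduces the claim to checking equality on such rectangles.

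First I need the objects to be well defined. Measurability of $\tilde F$ follows from measurability of $F$ and of the projection. I must also check that $u\mapsto\nu(u)=F(u,\cdot)\sharp\mu^u$ is a probability kernel. For this I would show that, for each $B\in\Bc(\Xc)$,
\[
u\mapsto\nu(u)(B)=\int_\Xc \mathds{1}_B(F(u,x))\,\mu^u(\d x)
\]
is measurable. This is the standard fact that $u\mapsto\int g(u,x)\mu^u(\d x)$ is measurable for jointly measurable bounded $g$ and a measurable kernel $u\mapsto\mu^u$. I would prove it by a monotone class argument, starting from $g=\mathds{1}_{A\times C}$, where it reduces to $\mathds{1}_A(u)\mu^u(C)$.

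Then, for a rectangle $A\times B$, the definition \eqref{eq : measure_from_disintegration} gives
\[
(\lambda\nu)(A\times B)=\int_A \nu(u)(B)\,\lambda(\d u)=\int_A\int_\Xc \mathds{1}_B(F(u,x))\,\mu^u(\d x)\,\lambda(\d u).
\]
On the other side,
\[
\tilde F\sharp(\lambda\mu)(A\times B)=(\lambda\mu)(\{(u,x):u\in A,\,F(u,x)\in B\})=\int_I\int_\Xc \mathds{1}_A(u)\mathds{1}_B(F(u,x))\,\mu^u(\d x)\,\lambda(\d u).
\]
The last step uses the integration formula for $\lambda\mu$ applied to the jointly measurable indicator, which again comes from the monotone class extension of \eqref{eq : measure_from_disintegration}. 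The two expressions coincide, so the measures agree on rectangles and hence everywhere.

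The only slightly delicate point is the monotone class step: it justifies that $\lambda\mu$ integrates jointly measurable functions through iterated integration and that $\nu$ is measurable. This is routine (Fubini for kernels), and I expect no real obstacle.
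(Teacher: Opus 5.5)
Your proof is correct and is essentially the paper's argument. Both reduce the two sides, via the pushforward change of variables and the iterated-integral formula for kernel products, to $\int_I\int_{\Xc}\Phi(\tilde F(u,x))\,\mu^u(\d x)\,\lambda(\d u)$; the only differences are that the paper takes $\Phi$ to be an arbitrary bounded measurable function, whereas you take $\Phi=\mathds{1}_{A\times B}$ and close with Dynkin's $\pi$--$\lambda$ theorem, and that you additionally verify the kernel measurability of $u\mapsto\nu(u)$, which the paper takes for granted.
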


\begin{proof}
    Let $\Phi \in L^{\infty}_m(I\times \Xc)$. Defining the measure $\tilde{\mu}(\d u, \d x) := \lambda(\d u)\mu^u(\d x) \in \Pc_{\lambda}(I \times \Xc) $ and starting from the right-hand side of \eqref{eq : equality measures}, we have $\int_{I \times \Xc} \Phi(u,x) \d(\tilde{F} \sharp \tilde{\mu})(u,x) =  \int_{I \times \Xc} \Phi(\tilde{F}(u,x)) \d \tilde{\mu}(u,x) = \int_{I \times \Xc} \Phi(\tilde{F}(u,x))\mu^u(\d x) \lambda(\d u)$. Now from the left-hand side, we have 
    \begin{align}
        \int_{I \times \Xc} \Phi(u,x) \nu(u,\d x) \lambda(\d u) = \int_{I} (\int_{\Xc} \Phi(u,F(u,x) \mu^u(\d x) \big) \lambda(\d u) = \int_{I \times \Xc} \Phi(\tilde{F}(u,x)) \mu^u(dx) \lambda(\d u).
    \end{align}
\end{proof}

\begin{Lemma}\textnormal{(Error approximation between empirical lifted measure and empirical measure).}\label{lemma : control wasserstein}
    For any $\bx \in \Xc^N$, we have
  \begin{align}\label{eq : wasserstein control}
    \Wc(\big(\mu_N^{\lambda} \big[\bu, \bx  \big], \mu_N \big[\bu, \bx \big] \big) \leq \frac{1}{2N}.
\end{align} 
\end{Lemma}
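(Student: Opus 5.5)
The bound will come from an explicit coupling rather than from any duality argument. Both measures have first marginal $\lambda$, so I compare them through transport plans. Recall
\begin{align}
\mu_N^{\lambda}[\bu,\bx](\d u,\d x)=\Big(\sum_{i=1}^N \mathds{1}_{I_i^N}(u)\,\delta_{x^i}(\d x)\Big)\d u,
\qquad
\mu_N[\bu,\bx]=\frac1N\sum_{i=1}^N \delta_{(\frac iN,x^i)}.
\end{align}
On $I\times\Xc$ I use the product metric $|u-u'|+d(x,x')$.

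I take the plan $\pi$ that sends the mass on the block $I_i^N\times\{x^i\}$ to the atom $(\frac iN,x^i)$:
\begin{align}
\pi(\d u,\d x,\d u',\d x') := \sum_{i=1}^N \mathds{1}_{I_i^N}(u)\,\d u\;\delta_{x^i}(\d x)\,\delta_{(\frac iN,x^i)}(\d u',\d x').
\end{align}
Its first marginal is $\mu_N^{\lambda}[\bu,\bx]$. Its second marginal is $\sum_i \lambda(I_i^N)\,\delta_{(\frac iN,x^i)}=\mu_N[\bu,\bx]$, because $\lambda(I_i^N)=\frac1N$. So $\pi\in\Pi\big(\mu_N^{\lambda}[\bu,\bx],\mu_N[\bu,\bx]\big)$.

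Under $\pi$ the state coordinate is never moved, so $d(x,x')=0$ $\pi$-a.s. The transport cost is therefore
\begin{align}
\int |u-u'|\,\d\pi=\sum_{i=1}^N\int_{\frac{i-1}{N}}^{\frac iN}\Big(\frac iN-u\Big)\d u = N\cdot\frac{1}{2N^2}=\frac1{2N}.
\end{align}
Taking the infimum over couplings in the definition of $\Wc$ gives the claim.

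The same construction applied to $(x^i,a^i)$ in place of $x^i$ gives the identical bound for $\Wc\big(\mu_N^{\lambda}[\bu,\bx,\ba],\mu_N[\bu,\bx,\ba]\big)$. This is the form used in Lemma \ref{lemma : difference of operators with a}.

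The only point needing care is checking that $\pi$ has the right marginals. It uses the fact that the intervals $I_i^N$ partition $[0,1)$ up to a $\lambda$-null set, so there is no real obstacle.
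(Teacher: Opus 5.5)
Your proof is correct and matches the paper's argument: the paper uses the same coupling, which leaves the state coordinate fixed and transports the uniform mass on each block $I_i^N$ to the atom $\frac{i}{N}$, for a total cost of $\frac{1}{2N}$. Your remark that the same construction bounds $\Wc\big(\mu_N^{\lambda}[\bu,\bx,\ba],\mu_N[\bu,\bx,\ba]\big)$ by $\frac{1}{2N}$ is also how the paper uses the lemma later.
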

\begin{proof}
    On $\Pc(I \times \Xc)^2$, we define the coupling $\gamma$ as $\gamma(\d x, \d u , \d y , \d v ) := \Big ( \sum_{i=1}^{N}  \mathds{1}_{I_i^N}(u) \ \delta_{\frac{i}{N}}(\d v) \otimes \delta_{x^i }(\d x) \otimes \delta_{x^i}(\d y)  \Big) \d u$.
It is straightforward to verify  that $\gamma$ verifies the marginal conditions and it follows after defining $\tilde{\gamma}(\d u ,\d v) := \sum_{i=1}^{N} \mathds{1}_{I_i^N}(u) \delta_{\frac{i}{N}}(\d v)\d u  $ that 
\begin{align}
    \Wc(\big(\mu_N^{\lambda} \big[ \bu , \bx \big], \mu_N \big[ \bu , \bx \big] \big) \leq \int_{(I \times \Xc)^2} \Big[| u -v|+ d(x,y) \Big] \gamma (\d x, \d u , \d y , \d v) =  \int_{I^2} | u -v | \tilde{\gamma}(\d u ,\d v) = \frac{1}{2N}.
\end{align}
\end{proof}

\begin{Lemma}\label{lemma : Mesurable Optimal Permutation}\textnormal{(Measurable optimal permutation).}
 Let $(\Xc,d)$ be a metric space. There exists a measurable map $ \sigma : (\Xc^N)^2 \ni (\bx,\bx') \mapsto \sigma^{\bx,\bx'}  \in \Sc_{N}$ where $\Sc_{N}$ denotes all the permutations of $\llbracket 1, N \rrbracket$ such that for all $\bx :=(x^1,x^2,\ldots,x^N)$ and $\bx':= (x'^1,x'^2,\ldots,x'^N)$, we have
\begin{align}\label{eq : optimal coupling empirical measures}
    \Wc \big(\frac{1}{N} \sum_{i=1}^{N}  \delta_{x^i} , \frac{1}{N} \sum_{i=1}^{N} \delta_{x'^i} \big) = \frac{1}{N} \sum_{i=1}^{N} d(x^i,x'^{\sigma_i^{\bx,\bx'}}).
\end{align}
where $\sigma_i^{\bx,\bx'}$ denotes the image of $i$ by the permutation $\sigma^{\bx,\bx'}$.
\end{Lemma}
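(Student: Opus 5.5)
The statement to prove is Lemma \ref{lemma : Mesurable Optimal Permutation}. The plan is to reduce the Wasserstein distance between two uniform empirical measures with $N$ atoms each to a finite assignment problem, and then to select the optimal permutation measurably by a lexicographic tie-breaking rule over the finite set $\Sc_N$.

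\textbf{Step 1: the infimum is attained at a permutation.} Fix $\bx,\bx'\in\Xc^N$. Every coupling of $\frac1N\sum_i\delta_{x^i}$ and $\frac1N\sum_j\delta_{x'^j}$ is the image of some $N\times N$ matrix $\pi=(\pi_{ij})$ with nonnegative entries and row and column sums equal to $\frac1N$. This stays true when some atoms coincide: one splits the mass of a repeated point equally among its indices, and this does not change the cost $\sum_{i,j}\pi_{ij}d(x^i,x'^j)$. The matrix $N\pi$ is bistochastic. By Birkhoff--von Neumann, the set of such matrices is the convex hull of the permutation matrices. The cost is linear in $\pi$, so its minimum over this polytope is attained at an extreme point, that is, at $\pi_{ij}=\frac1N\mathds{1}_{j=\sigma(i)}$ for some $\sigma\in\Sc_N$. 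Consequently
\[
\Wc\Big(\frac1N\sum_i\delta_{x^i},\frac1N\sum_i\delta_{x'^i}\Big)=\min_{\sigma\in\Sc_N}\frac1N\sum_{i=1}^N d(x^i,x'^{\sigma(i)}).
\]

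\textbf{Step 2: a measurable selection.} Enumerate $\Sc_N=\{\sigma_1,\dots,\sigma_{N!}\}$ and set $c_k(\bx,\bx'):=\frac1N\sum_i d(x^i,x'^{\sigma_k(i)})$. Each $c_k$ is continuous on $(\Xc^N)^2$, hence Borel. Define $\sigma^{\bx,\bx'}:=\sigma_{k^*}$, where $k^*$ is the smallest index with $c_{k^*}=\min_l c_l$. The map $m:=\min_l c_l$ is Borel. For each $k$, the set
\[
\{\sigma^{\bx,\bx'}=\sigma_k\}=\{c_k=m\}\cap\bigcap_{l<k}\{c_l>m\}
\]
is a finite intersection of Borel sets. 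Therefore the map $(\bx,\bx')\mapsto\sigma^{\bx,\bx'}$ into the finite set $\Sc_N$ (with the discrete $\sigma$-algebra) is measurable. Combined with Step 1, this gives \eqref{eq : optimal coupling empirical measures}.

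\textbf{Main obstacle.} The only delicate point is Step 1: one must justify that a general coupling can be encoded by a bistochastic matrix even when atoms repeat, since then the coupling does not determine the matrix uniquely. Choosing the equal-split representative, as described above, settles this. Step 2 is routine once the candidate set is finite.
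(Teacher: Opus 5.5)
Your proof is correct. The paper gives no argument of its own and only cites \cite{thorpe2018introduction}. There, the reduction of the Kantorovich problem between two uniform $N$-point empirical measures to an assignment problem is obtained by the same Birkhoff--von Neumann and extreme-point argument as your Step~1.

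What the citation leaves implicit is the part the lemma's title stresses: that an optimal permutation can be chosen to depend measurably on $(\bx,\bx')$. Your Step~2 supplies this, using the finitely many continuous costs $c_k$ and lexicographic tie-breaking. Your equal-split treatment of repeated atoms is also correct: it shows every coupling, not only those between distinct points, is represented by a bistochastic matrix with the same cost. Your write-up is therefore more self-contained than the paper's.

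One point is worth stating explicitly. Continuity of the $c_k$ gives measurability for the Borel $\sigma$-algebra of the product topology on $(\Xc^N)^2$. The lemma is applied to the separable spaces $I\times\Xc$ and $I\times\Xc\times A$, where this $\sigma$-algebra coincides with the product of the Borel $\sigma$-algebras. That coincidence is what is needed when $\sigma$ is later composed with random vectors such as $\tilde{\boldsymbol{\xi}}$.
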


\begin{proof}
    The proof of this known result can be found in \cite{thorpe2018introduction}.
\end{proof}

\section{Lifted MDP on $\Pc_{\lambda}(I \times \Xc)$ in the strong formulation}\label{subsec: lifted_strong_formulation}

We  denote by $\lbrace \mu_t = \P^0_{X_t^u}(\d x) \d u, t \in \N \rbrace$ the random $\Pc_{\lambda}(I \times \Xc)$-valued process, which is $\F^0$-adapted by Proposition A.1 in \cite{motte2022mean}. We consider the $\F^0$-adapted control  process to be $\balpha_t = \P^0_{(X_t^v,\alpha_t^v)}(\d x , \d a) \d v$ valued in the space of probability measures $\Pc(I \times \Xc \times A)$, endowed with its natural $\sigma$-algebra with the Wasserstein metric. We note that this this $\boldsymbol{A}$-valued process has to satisfy the property that $\pr_{12} \sharp \balpha_t = \mu_t$ at any $t \in \N$.
\begin{Lemma}
Let $\tilde{F}$ the map introduced in \eqref{eq : definition tilde F}. Then, we have
\begin{align}\label{eq : dynamics of mu_strong_formulation}
    \mu_{t+1} =  \tilde{F} \big(\cdot,\cdot,\cdot, \P^0_{(X_t^v,\alpha_t^v)}(\d x , \d a)\d v , \cdot ,\epsilon_{t+1}^0 \big) \sharp \big( \P^0_{(X_t^v,\alpha_t^v)}(\d x , \d a)\d v \otimes \lambda_{\epsilon} \big), \quad t \in \N, \quad \text{$\P$-a.s}, .
\end{align}
\end{Lemma}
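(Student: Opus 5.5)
The plan is to mirror the weak-formulation lemma \eqref{eq : dynamics of mu} at the level of each label $u$, and then glue the labels together with Lemma \ref{eq : measure products from desintegration}.

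Fix $t\in\N$ and write $\nu_t := \P^0_{(X_t^v,\alpha_t^v)}(\d x,\d a)\,\d v$. By Lemma \ref{lemma : measurability of mapping} and the representation \eqref{eq : representation for X}, $\nu_t = \tilde\phi_t((\epsilon^0_s)_{s\le t})$ is a $\Fc^0_t$-measurable random measure.

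\textbf{Step 1 (conditional law of each $X^u_{t+1}$).} Fix $\lambda$-a.e.\ $u\in I$. From the dynamics \eqref{eq:dynamics_NEMFC} we have $X^u_{t+1} = F(u,X_t^u,\alpha_t^u,\nu_t,\epsilon^u_{t+1},\epsilon^0_{t+1})$. The following hold:
\begin{itemize}
\item $X_t^u$ and $\alpha_t^u$ are functions of $(u,\Gamma^u,(\epsilon^u_s)_{s\le t},(\epsilon^0_s)_{s\le t})$;
\item $\epsilon^u_{t+1}$ is independent of $(\Gamma^u,(\epsilon^u_s)_{s\le t},\epsilon^0)$ and has law $\lambda_\epsilon$;
\item $\nu_t$ and $\epsilon^0_{t+1}$ are $\epsilon^0$-measurable.
\end{itemize}
Computing the conditional law given $\epsilon^0$ (freezing the $\epsilon^0$-measurable arguments) therefore gives, $\P$-a.s.,
\begin{align}
\P^0_{X^u_{t+1}} = F(u,\cdot,\cdot,\nu_t,\cdot,\epsilon^0_{t+1})\sharp\big(\P^0_{(X_t^u,\alpha_t^u)}\otimes\lambda_\epsilon\big).
\end{align}
This is the same computation as in the weak case, namely $\P^0_{(X^u_t,\alpha^u_t,\epsilon^u_{t+1})} = \P^0_{(X^u_t,\alpha^u_t)}\otimes\lambda_\epsilon$.

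\textbf{Step 2 (integration in $u$).} Apply Lemma \ref{eq : measure products from desintegration} pathwise in $\omega$. Take the label space $I$, the "state" variable $(x,a,e)\in\Xc\times A\times E$, the kernel $u\mapsto \P^0_{(X^u_t,\alpha^u_t)}\otimes\lambda_\epsilon$, and the map $(u,x,a,e)\mapsto F(u,x,a,\nu_t,e,\epsilon^0_{t+1})$. The lemma is stated with target $\Xc$ and source $I\times\Xc$, but its proof is verbatim for a general Polish source; it only uses Fubini. It yields
\begin{align}
\mu_{t+1} = \P^0_{X^u_{t+1}}(\d x)\,\d u = \tilde F(\cdot,\cdot,\cdot,\nu_t,\cdot,\epsilon^0_{t+1})\sharp\big(\nu_t\otimes\lambda_\epsilon\big),
\end{align}
using $(\P^0_{(X^u_t,\alpha^u_t)}\otimes\lambda_\epsilon)\,\d u = \nu_t\otimes\lambda_\epsilon$. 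This is exactly \eqref{eq : dynamics of mu_strong_formulation}.

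\textbf{Main obstacle.} The delicate point is the "a.s.\ for a.e.\ $u$" bookkeeping. Step 1 produces, for each $u$, a $\P$-null set that depends on $u$, and an uncountable union of such sets is not controlled. To avoid this, I would work with the jointly measurable versions:
\begin{itemize}
\item $\phi_t(u,y)$, the measurable version of $\P^0_{(X^u_t,\alpha^u_t)}$ from the proof of Lemma \ref{lemma : measurability of mapping};
\item the analogous $\phi'_{t+1}(u,y)$ for $\P^0_{X^u_{t+1}}$, obtained from the pushforward through the Borel map $x_{t+1}$ of \eqref{eq : measurable mapping xt+1}.
\end{itemize}
For these versions the identity of Step 1 holds for every $(u,y)$, as an identity between Borel functions of $(u,y)$. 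The exceptional sets then sit in $I\times(E^0)^{t+1}$ and have $\d u\otimes\P_{\epsilon^0}$-measure zero. Fubini converts this into a single $\P$-null set outside of which the identity holds for $\lambda$-a.e.\ $u$, and this is sufficient for the integrated identity in Step 2.
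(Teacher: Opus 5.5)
Your proposal is correct and follows essentially the same route as the paper. The paper also defines the per-label kernel $\nu_{t+1}(u) = F(u,\cdot,\cdot,\P^0_{(X_t^v,\alpha_t^v)}(\d x,\d a)\,\d v,\cdot,\epsilon^0_{t+1})\sharp(\P^0_{(X^u_t,\alpha^u_t)}\otimes\lambda_\epsilon)$, observes $\mu_{t+1}=\lambda\nu_{t+1}$, and concludes via Lemma \ref{eq : measure products from desintegration} and the Fubini--Tonelli identity $(\P^0_{(X^u_t,\alpha^u_t)}\otimes\lambda_\epsilon)\,\d u = \P^0_{(X^v_t,\alpha^v_t)}(\d x,\d a)\,\d v\otimes\lambda_\epsilon$; your handling of the $u$-dependent null sets through the jointly measurable versions built from $x_{t+1}$ is a sound refinement of a point the paper leaves implicit.
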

\begin{proof}
    Starting from the dynamics of $X_t^u$ in \eqref{eq:dynamics_NEMFC}, we denote $\P-\text{a.s}$ by $\nu=(\nu_t)_{t \in \N^*} \in L^0(I; \Pc(\Xc)\big)$  the process defined as 
    \begin{align}
        \nu_{t+1}(u) = F \big(u,\cdot,\cdot,\P^0_{(X_t^v,\alpha_t^v)}(\d x , \d a)\d v, \cdot ,\epsilon_{t+1}^0 \big) \sharp \big( \P^0_{(X_t^u,\alpha_t^u)} \otimes \lambda_{\epsilon} \big) \in \Pc(\Xc), \quad t \in \N.
    \end{align}
    Then, we clearly have $\mu_{t+1}= \lambda  \nu_{t+1} \hspace{0.2 cm} \P \text{ a.s}$. Now, it remains to show that 
    \begin{align}
        \tilde{F} \big(\cdot,\cdot,\cdot, \P^0_{(X_t^v,\alpha_t^v)}(\d x , \d a)\d v , \cdot ,\epsilon_{t+1}^0 \big) \sharp \big( \P^0_{(X_t^v,\alpha_t^v)}(\d x , \d a)\d v \otimes \lambda_{\epsilon} \big) =  \lambda  \nu_{t+1}, 
        \quad \P \text{ a .s}, 
\end{align}
and this follows from a straight application of the result in Lemma \ref{eq : measure products from desintegration} after noticing that $(\mu^u(\d x) \otimes \lambda_{\epsilon}) \d u = (\mu^u(\d x) \d  \mu ) \otimes \lambda_{\epsilon}$ from Fubini-Tonelli's theorem. 
\end{proof}
By using the coupling projection $\bp$ from \eqref{eq : coupling_projection} and definition of $\hat{F}$ in \eqref{eq : definition hat F}, the dynamics \eqref{eq : dynamics of mu_strong_formulation} can be rewritten as 
\begin{align}
    \mu_{t+1} = \hat{F} \big(\mu_t, \balpha_t,\epsilon_{t+1}^0 \big) \quad \P-\text{a .s}, \quad t \in \N,
\end{align}
Now, it turns out that the strong formulation leads to the same MDP as the weak formulation with characteristics $\big(\Pc_{\lambda}(I \times \Xc), \bA = \Pc(I \times \Xc \times A), \hat{F},\hat{f}, \beta \big)$ with controlled dynamics given by \eqref{eq : dynamics canonical formulation}-\eqref{eq : value function canonical formulation}. Given $\boldsymbol{\xi}=(\xi^u)_{u \in I}$ admissible according to Definition \ref{assumption : Admissible conditions for the strong formulation}, we can then introduce the relaxed operator $\Lc_{\boldsymbol{\xi}}^0$ as follows
  \begin{align}\label{eq : Lxi mapping_strong_formulation}
  \begin{cases}
  \mathcal{L}_{\boldsymbol{\xi}}^0 : \mathcal{A}^S &\longrightarrow \bAc,  \\ \quad
\balpha = (\alpha_t^u)_{u,t} &\longmapsto \balpha^{R} = (\balpha_t^R)_t 
: \balpha_t^R:= \P^0_{(X_t^v,\alpha_t^v)}(\d x , \d a) \d v , \quad 
t \in  \N .
  \end{cases}
\end{align}

\section{Bellman equation for the $N$-agent MDP in the non exchangeable setting}\label{appendix : bellman_equation_N_agents}

In our non-exchangeable setting, the dynamic relation \eqref{eq : dynamic N agents} can be rewritten as  $\bX_{t+1} = \boldsymbol{F}_N(\bX_t, \balpha_t, \boldsymbol{\epsilon}_{t+1})$ where the measurable state transition map $\boldsymbol{F}_N : \Xc^N \times A^N \times (E^N \times E^0) \to \Xc^N$ is given for  any $\bx=(x^i)_{i \in \llbracket 1, N \rrbracket}$, $\ba = (a^i)_{i \in \llbracket 1, N \rrbracket}$ and  $ \be= \big((e^i)_{i \in \llbracket 1, N \rrbracket},e^0)$ by $    \bF_N(\bx,\ba,\be) := \Big( F_N(\frac{i}{N}, x^i,a^i, \mu_N \big[\bu , \bx,\ba \big],e^i,e^0) \Big)_{i \in \llbracket 1, N \rrbracket}$.
The reward  function $\boldsymbol{f}_N : \Xc^N \times A^N \to \R$ is given by $ \boldsymbol{f}_N(\bx, \ba) := \frac{1}{N} \sum_{i=1}^{N} f_N \big(\frac{i}{N}, x^i, a^i, \mu_N \big[ \bu, \bx , \ba \big] \big)$.
Given $\balpha \in \bAc_N$, the expected gain \eqref{eq : Expected Gain N-agent} can be rewritten as 
\begin{align}
    V_N^{\balpha} (\bx_0) =\E \Big[ \sum_{t \in \N} \beta^t \boldsymbol{f}_N(\bX_t, \balpha_t) \Big].
\end{align}
The MDP with characteristics $(\Xc^N,A^N, \bF_N, \boldsymbol{f}_N, \beta)$ is then well posed. We then  introduce the Bellman operator  $\Tc_N : L^{\infty}_m(\Xc^N) \to L^{\infty}(\Xc^N)$ defined for any $W \in L_m^{\infty}(\Xc^N)$ by 
\begin{align}\label{eq : Bellman_Operator_N_agents}
     \big[ \Tc_N W \big](\bx) := \underset{ \ba \in A^N}{\sup} \T_N^a W(\bx), \quad \bx \in \Xc^N.
\end{align}
where
\begin{align}\label{eq : operator Bellman N-agent before sup}
    \T_N^{\ba}W(\bx):= \boldsymbol{f}_N(\bx, \ba) + \beta \E \Big[W \big(\bF_N(\bx,\ba,\boldsymbol{\epsilon}_1)\big) \Big],\quad \bx \in \Xc^N, \quad \ba \in A^N.
\end{align}
Following closely the arguments in Appendix B in \cite{motte2023quantitative}, we can check that $\Tc_N$ admits a unique fixed point denoted by $V_N^{\star}$ which is $\gamma$-Hölder.  The objective is now to show that $V_N = V_N^{\star}$, i.e, the value function of the $N$-agent MDP is the unique fixed point of the Bellman operator $\Tc_N$. The proof of this standard result relies on the construction of $\epsilon$-optimal feedback policy and on a suitable verification result. We first recall the notion of (randomized) feedback policy in the context of the $N$-agent MDP.

\begin{Definition}\label{def : feedback_policy}
    A feedback policy (resp randomized feedback policy) for the $N$-agent non exchangeable mean field MDP is a mapping $\bpi \in L^0(\Xc^N;A^N)$ (resp $\bpi \in L^0( \Xc^N \times [0,1]^N ; A^N)$. The associated feedback control is the unique control 
\begin{align}\label{eq : feedback_policy_N_agent_MDP}
    \balpha_t^{\bpi} = \bpi(\bX_t), \quad \text{resp}, \quad \balpha_t^{\bpi} = \bpi_{r}(\bX_t,\boldsymbol{Z}_t)),
\end{align}
 where $ \big \lbrace \boldsymbol{Z}_t =(Z_t^i)_{i \in \llbracket 1, N \rrbracket}, t \in \N \big \rbrace$ is  a family of mutually i.i.d uniform random variables  on $[0,1]$ independant of $\boldsymbol{\epsilon}$.
\end{Definition}
Given $\bpi \in L^0(\Xc^N ;A^N)$ (resp $L^0(\Xc^N \times [0,1]^N ; A^N)$), we introduce the operator $\Tc_N^{\bpi}$ on $L_m^{\infty}(\Xc^N)$ defined as 
\begin{align}\label{eq : Tn operator for pi}
    \Tc_N^{\bpi }W(\bx) :=  \boldsymbol{f}_N(\bx,\pi(\bx)) + \beta \E \Big[ W \big(\boldsymbol{F}_N(\bx,\bpi(\bx),\epsilon_1) \big) \Big], \quad \bx \in \Xc^N,
\end{align}
resp
\begin{align}
    \Tc_N^{\bpi} W(\bx) :=  \E \Big[ \boldsymbol{f}_N \big(\bx,\pi(\bx, \boldsymbol{Z}_0) \big) + \beta  W \big(\boldsymbol{F}_N(\bx,\bpi(\bx, \boldsymbol{Z}_0),\epsilon_1)  \Big], \quad \bx \in \Xc^N,
\end{align}
where $\boldsymbol{Z}_0 :=(Z_0^i)_{i \in \llbracket 1 , N \rrbracket}$ is a family of i.i.d uniform random variables independant of $\boldsymbol{\epsilon}$.
\begin{Definition}
    Fix $\epsilon \geq 0$. We say that $\bpi^{\epsilon}$ is an $\epsilon$-optimal (randomized) feedback policy for $V_N^{\star}$ if for any $\bx \in \Xc^N$
    \begin{align}
        V_N^{\star}(\bx) \leq \Tc_N^{\bpi^{\epsilon}} V_N^{\star}(\bx) + \epsilon.
    \end{align}
\end{Definition}
We recall from the verification result (Lemma B.5 in \cite{motte2023quantitative}) that if $\bpi^{\epsilon}$ is an $\epsilon$-optimal (randomized) feedback policy for $V_N^{\star}$, then the associated feedback control $\balpha^{\bpi^{\epsilon}} = (\balpha_t^{\bpi^{\epsilon}})_{t \in \N}$ from Definition \ref{def : feedback_policy}  is $\frac{\epsilon}{1- \beta}$ optimal for $V_N$, i.e satisfies $V_N^{\balpha^{\bpi^{\epsilon}}} \geq V_N - \frac{\epsilon}{1 - \beta}$ and $V_N  \geq V_N^{\star} - \frac{\epsilon}{1 - \beta}$.
\begin{Theorem}
    For all $\epsilon > 0$, there exists a (randomized) feedback policy $\bpi^{\epsilon}$ that is $\epsilon$-optimal for $V_N^{\star}$. Consequently, the control $\balpha^{\bpi^{\epsilon}} \in \bAc$ is $\frac{\epsilon}{1- \beta}$ for $V_N$, and we have $V_N = V_N^*$, which thus satisfies the Bellman fixed point equation with operator $\Tc_N$ defined in \eqref{eq : Bellman_Operator_N_agents}.
\end{Theorem}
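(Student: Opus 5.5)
The plan is to follow the classical discrete-time MDP route of Appendix B in \cite{motte2023quantitative}, now on the compact state space $\Xc^N$ with compact action space $A^N$. We build $\epsilon$-optimal feedback policies for $V_N^\star$ by discretizing the state space, and then invoke the verification result (Lemma B.5 in \cite{motte2023quantitative}) recalled just before the statement.

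We first fix $\epsilon>0$ and use the regularity of $V_N^\star$. By the fixed point argument recalled above, $V_N^\star$ is $\gamma$-Hölder on $(\Xc^N,\bd_N)$ with some constant $K_N$. Assumption \ref{assumption: regularity_prop_chaos_Fn_fn_F_f} gives that $\boldsymbol{f}_N$ is Lipschitz in $\bx$ uniformly in $\ba$. It also gives $\E[\bd_N(\bF_N(\bx,\ba,\boldsymbol{\epsilon}_1),\bF_N(\bx',\ba,\boldsymbol{\epsilon}_1))]\le C\,\bd_N(\bx,\bx')$, using \eqref{eq : inequality wasserstein empirical measures} to control $\Wc(\mu_N[\bu,\bx,\ba],\mu_N[\bu,\bx',\ba])$. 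Combining these with Jensen's inequality for $x\mapsto x^\gamma$, we obtain
\begin{align}
\sup_{\ba\in A^N}\big|\T_N^{\ba}V_N^\star(\bx)-\T_N^{\ba}V_N^\star(\bx')\big|\le C_N\big(\bd_N(\bx,\bx')+\bd_N(\bx,\bx')^{\gamma}\big).
\end{align}
It follows that the same bound holds for $\Tc_N V_N^\star=V_N^\star$.

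The key construction comes next. Since $\Xc^N$ is compact, we pick $\eta>0$ such that $C_N(\eta+\eta^\gamma)\le\epsilon/3$. We then take a finite $\eta$-net $\bx_1,\dots,\bx_K$ and the measurable partition $B_k:=\{\bx:\bd_N(\bx,\bx_k)<\eta\}\setminus\bigcup_{j<k}B_j$. For each $k$ we choose, by definition of the supremum, an action $\ba_k\in A^N$ with $\T_N^{\ba_k}V_N^\star(\bx_k)\ge V_N^\star(\bx_k)-\epsilon/3$. We set $\bpi^\epsilon(\bx):=\sum_k \ba_k\mathds 1_{B_k}(\bx)$, which is a piecewise constant and hence measurable element of $L^0(\Xc^N;A^N)$. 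For $\bx\in B_k$, the uniform modulus above gives
\begin{align}
\Tc_N^{\bpi^\epsilon}V_N^\star(\bx)=\T_N^{\ba_k}V_N^\star(\bx)\ge \T_N^{\ba_k}V_N^\star(\bx_k)-\tfrac{\epsilon}{3}\ge V_N^\star(\bx_k)-\tfrac{2\epsilon}{3}\ge V_N^\star(\bx)-\epsilon.
\end{align}
This shows that $\bpi^\epsilon$ is $\epsilon$-optimal. No randomization is needed here, although a randomized policy is also admissible.

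The verification lemma then yields that $\balpha^{\bpi^\epsilon}$ is $\frac{\epsilon}{1-\beta}$-optimal for $V_N$ and that $V_N\ge V_N^\star-\frac{\epsilon}{1-\beta}$. For the reverse inequality $V_N\le V_N^\star$, we argue exactly as in Lemma \ref{lemma : inequality_value_func_fixed_point}. The DPP for open-loop controls, conditioning on $\boldsymbol{Z}^N$ and the first noise and then shifting, gives $V_N^{\balpha}(\bx)\le \T_N^{\balpha_0}\,\cdot$. Iterating the contraction then yields $\inf(V_N^\star-V_N^{\balpha})\ge\beta\inf(V_N^\star-V_N^{\balpha})$. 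Letting $\epsilon\to0$ gives $V_N=V_N^\star$.

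The main obstacle is the measurability and regularity step, namely making $\Tc_N$ well defined and getting the uniform-in-$\ba$ continuity of $\bx\mapsto\T_N^{\ba}V_N^\star(\bx)$. This is why we restrict to the closed set of Hölder functions, in the spirit of $\Mc$, where $\Tc_N$ maps into measurable (indeed continuous) functions. The discretization itself is then routine thanks to compactness.
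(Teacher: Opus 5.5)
Your proposal is correct and follows essentially the same route as the paper, which simply defers to Proposition B.2 and the verification Lemma B.5 of \cite{motte2023quantitative}. That route is: a finite Borel partition of the compact space $\Xc^N$; piecewise-constant $\epsilon/3$-optimal actions, justified by the modulus of continuity of $\bx \mapsto \T_N^{\ba}V_N^\star(\bx)$ that is uniform in $\ba$ (and, as you implicitly use, requires no continuity in the action); then the verification lemma combined with $V_N \le V_N^\star$. The only point to tighten is that in this last step the infimum must be taken jointly over initial states and open-loop controls, since the shifted control obtained after conditioning on $\boldsymbol{Z}^N$ and $\boldsymbol{\epsilon}_1$ is not the original one.
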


\begin{proof}
    The proof of this result can be found in Proposition B.2 in \cite{motte2023quantitative}.
\end{proof}

\begin{small}
\end{small}

\begin{small}
\bibliographystyle{plain}   
\bibliography{References.bib}   
\end{small}

\end{document}